\newcommand{\Cu}{\mathrm{Cu}}
\newcommand{\CuT}{\mathrm{Cu}_{\mathbb T}}
\newcommand{\Ell}{\mathrm{Ell}}
\newcommand{\Inv}{\mathbf{Inv}}
\newcommand{\Sc}{\Sigma}
\newcommand{\eps}{\varepsilon}
\newcommand{\Bb}{\mathcal B}
\newcommand{\Hs}{\mathcal H}
\newcommand{\Ks}{\mathcal K}
\newcommand{\id}{\mathrm{id}}
\newcommand{\co}{\overline{\mathrm{co}}^w}
\newcommand{\ad}{\mathrm{ad}}
\newcommand{\Z}{\mathcal Z}
\newcommand{\cb}{\mathrm{cb}}
\newcommand{\K}{\mathcal K}
\newcommand{\M}{\mathcal M}
\newcommand{\N}{\mathcal N}
\numberwithin{equation}{section}
\theoremstyle{plain}
\newtheorem{lemma}{Lemma}[section]
\newtheorem{theorem}[lemma]{Theorem}
\newtheorem{corollary}[lemma]{Corollary}
\newtheorem{proposition}[lemma]{Proposition}
\newtheorem*{proposition*}{Proposition}
\theoremstyle{definition}
\newtheorem{definition}[lemma]{Definition}
\theoremstyle{remark}
\title{The Cuntz semigroup and stability of close $C^*$-algebras}
\author[F.~Perera]{Francesc Perera}
\address{\hskip-\parindent
Francesc Perera, Department of Mathematics, 08193 Bellaterra, Barcelona, Spain.}
\email{perera@mat.uab.cat }
\author[A.~Toms]{Andrew Toms}
\address{\hskip-\parindent
Andrew Toms, Department of Mathematics, Purdue University, 150 North University Street, West Lafayette, IN 47907, USA.}
\email{atoms@pudue.edu}
\author[S.~White]{Stuart White}
\address{\hskip-\parindent
Stuart White, School of Mathematics and Statistics, University of Glasgow, 
University Gardens, Glasgow Q12 8QW, Scotland.}
\email{stuart.white@glasgow.ac.uk}
\author[W.~Winter]{Wilhelm Winter}
\address{\hskip-\parindent
Wilhelm Winter, Mathematisches Institut der WWU M\"unster, Einsteinstra\ss{}e 62, 48149, M\"unster, Germany.}
\email{wwinter@uni-muenster.de}
\thanks{Research partially supported by  EPSRC (grants No.\  EP/G014019/1 and No.\ EP/I019227/1), by the DFG (SFB 878), by NSF (DMS-0969246), by  the DGI MICIIN (grant No.\ MTM2011-28992-C02-01), and by the Comissionat per Universitats i Recerca de la Generalitat de Catalunya. Andrew Toms is partially supported by the 2011 AMS Centennial Fellowship.}
\date{\today}
\begin{document}

\begin{abstract}
We prove that separable $C^*$-algebras which are completely close in a natural uniform sense have isomorphic Cuntz semigroups, continuing a line of research developed by Kadison-Kastler, Christensen, and Khoshkam.  This result has several applications:  we are able to prove that the property of stability is preserved by close $C^*$-algebras provided that one algebra has stable rank one;  close $C^*$-algebras must have affinely homeomorphic spaces of lower-semicontinuous quasitraces;  strict comparison is preserved by sufficient closeness of $C^*$-algebras.  We also examine $C^*$-algebras which have a positive answer to Kadison's Similarity Problem,  as these algebras are completely close whenever they are close.  A sample consequence is that sufficiently close $C^*$-algebras have isomorphic Cuntz semigroups when one algebra absorbs the Jiang-Su algebra tensorially.
\end{abstract}

\maketitle

\section{introduction}

In 1972 Kadison and Kastler introduced a metric $d$ on the $C^*$-subalgebras of a given $C^*$-algebra by equipping the unit balls of the subalgebras with the Hausdorff metric (in norm) (\cite{KK:AJM}).  They conjectured that sufficiently close $C^*$-subalgebras of $\mathcal B(\Hs)$ should be isomorphic, and this conjecture was recently established by Christensen, Sinclair, Smith and the last two named authors (\cite{CSSWW:Acta}) when one $C^*$-algebra is separable and nuclear.  The one-sided version of this result---that a sufficiently close near inclusion of a nuclear separable $C^*$-algebra into another $C^*$-algebra gives rise to a true inclusion---was later proved by Hirshberg, Kirchberg, and the third named author (\cite{HKW:ADV}).  These results and others (see \cite{CSSW:GAFA}, \cite{CCSSWW:PNAS}) have given new momentum to the perturbation theory of operator algebras.
 
The foundational paper \cite{KK:AJM} was concerned with structural properties of close algebras, showing that the type decomposition of a von Neumann algebra transfers to nearby algebras. We continue this theme here asking ``Which properties or invariants of $C^*$-algebras are preserved by small perturbations?" With the proof of the Kadison-Kastler conjecture the answer for nuclear separable $C^*$-algebras is, ``All of them."  Here we consider general separable $C^*$-algebras where already, there are some results. Sufficiently close $C^*$-algebras have isomorphic lattices of ideals (\cite{P:IJM}) and algebras whose stabilizations are sufficiently close have isomorphic K-theories (\cite{K:JOT}). This was extended to the Elliott invariant consisting of K-theory, traces, and their natural pairing, in \cite{CSSW:GAFA}.  A natural next step is to consider the Cuntz semigroup of (equivalence classes of) positive elements (in the stabilisation) of a $C^*$-algebra, due both to its exceptional sensitivity in determining non-isomorphism (\cite{T:Ann}), classification results using the semigroup (\cite{R:Adv}) and the host of $C^*$-algebraic properties that can be formulated as order-theoretic properties of the semigroup: for example there is strong evidence to suggest that the behaviour of the Cuntz semigroup characterises important algebraic regularity properties of simple separable nuclear $C^*$-algebras (\cite{MS:Acta,W:Invent1,W:Invent2}).  We prove that algebras whose stabilizations are sufficiently close do indeed have isomorphic Cuntz semigroups, a surprising fact given the sensitivity of a Cuntz class to perturbations of its representing positive element.  This is in stark contrast with the case of Murray-von Neumann equivalence classes of projections, where classes are stable under perturbations of the representing projection of size strictly less than one.  The bridge between these two situations is that we can arrange for the representing positive element of a Cuntz class to be almost a projection in trace.  We exploit this fact through the introduction of what we call {\it very rapidly increasing sequences} of positive contractions, increasing sequences where each element almost acts as a unit on its predecessor.    

The Kadison-Kastler metric $d$ is equivalent to a complete version $d_{\cb}$ (given by applying $d$ to the stabilisations) if and only if Kadison's Similarity Problem has a positive solution \cite{CSSW:GAFA,CCSSWW:arXiv}; the latter is known to hold in considerable generality, for instance in the case of $\mathcal{Z}$-stable algebras (\cite{JW:BLMS}).  We show how this result, and a number of other similarity results for $C^*$-algebras, can be put in a common framework using Christensen's property $D_k$ (\cite{C:IJM}), and, building on \cite{CSSW:GAFA}, make a more careful study of automatic complete closeness and its relation to property $D_k$.  We prove that  if an algebra $A$ has $D_k$ for some $k$, then $d(A \otimes \mathcal{K}, B \otimes \mathcal{K})\leq C(k) d(A,B)$, where $C(k)$ is a constant independent of $A$ and $B$; as a consequence sufficiently close $C^*$-algebras have isomorphic Cuntz semigroups provided one algebra is $\Z$-stable. 

Stability is perhaps the most basic property one could study in perturbation theory, yet proving its permanence under small perturbations has seen very little progress.  We take a significant step here by proving that stability is indeed preserved provided that one of the algebras considered has stable rank one.  The proof is an application of our permanence result for the Cuntz semigroup.  Another application is our proof that stably close $C^*$-algebras have affinely homeomorphic spaces of lower semicontinuous 2-quasitraces. This extends and improves an earlier results from \cite{CSSW:GAFA} showing that the affine isomorphism between the trace spaces of stably close $C^*$-algebras obtained in \cite{CSSW:GAFA} is weak$^*$-weak$^*$ continuous.

The paper is organized as follows:  Section \ref{Sect2} contains the preliminaries on the Cuntz semigroup and the Kadison-Kastler metric;  Section \ref{Sect3} establishes the permanence of the Cuntz semigroup under complete closeness;  Section \ref{Sect4} discusses property $D_k$ and proves our permanence result for stability;  Section \ref{Sect5} proves permanence for quasitraces.

\section{Preliminaries}\label{Sect2}

Throughout the paper we write $A^+$ for the positive elements of a $C^*$-algebra $A$, $A_1$ for the unit ball of $A$ and $A^+_1$ for the positive contractions in $A$.  

In the next two subsections we review the definition and basic properties of the Cuntz semigroup. A complete account can be found in the survey \cite{APT:Contemp}.

\subsection{The Cuntz semigroup} Let $A$ be a $C^*$-algebra. Let us consider on
$(A\otimes\mathcal K)^+$ the relation $a\precsim b$ if $v_nbv_n^*\to a$ for some sequence $(v_n)$ in $A\otimes\mathcal K$.
Let us write $a\sim b$ if $a\precsim b$ and $b\precsim a$. In this case we say that $a$ is \emph{Cuntz equivalent} to $b$.
Let $\Cu(A)$ denote the set $(A\otimes \mathcal K)^+/\sim$ of Cuntz equivalence classes. We use $\langle a \rangle$ to denote the class of $a$ in $\Cu(A)$.  It is clear that
$\langle a \rangle\leq \langle b \rangle \Leftrightarrow a\precsim b$ defines an order on $\Cu(A)$. We also endow $\Cu(A)$
with an addition operation by setting $\langle a \rangle+\langle b \rangle:=\langle a'+b' \rangle$, where
$a'$ and $b'$ are orthogonal and Cuntz equivalent to $a$ and $b$ respectively (the choice of $a'$ and $b'$
does not affect the Cuntz class of their sum). The semigroup $W(A)$ is then the subsemigroup of $\Cu(A)$ of Cuntz classes
with a representative in $\bigcup_n M_n(A)_+$.

Alternatively, $\Cu(A)$ can be defined to consist of equivalence classes of countably generated Hilbert modules over $A$ \cite{CEI:Crelle}.  The equivalence relation boils down to isomorphism in the case that $A$ has stable rank one, but is rather more complicated in general and as we do not require the precise definition of this relation in the sequel, we omit it.  We note, however, that the identification of these two approaches to $\Cu(A)$ is achieved by associating the element $\langle a \rangle$ to the class of the Hilbert module $\overline{a\ell_2(A)}$. 

\subsection{The category $\mathbf{Cu}$}\label{cu}

The semigroup $\Cu(A)$ is an object in a category of ordered Abelian monoids denoted by ${\mathbf{Cu}}$ introduced in \cite{CEI:Crelle} with additional properties.  Before stating them, we require the notion of order-theoretic compact containment.  Let $T$ be a pre-ordered set with $x,y \in T$.  We say that $x$ is \emph{compactly contained} in $y$---denoted by $x \ll y$---if for any increasing sequence $(y_n)$ in $T$ with supremum $y$, we have $x \leq y_{n_0}$ for some $n_0 \in \mathbb{N}$. An object $S$ of $\mathbf{Cu}$ enjoys the following properties (see \cite{CEI:Crelle,APT:Contemp}), which we use repeatedly in the sequel. In particular the existence of suprema in property {\bf P3} is a crucial in our construction of a map between the Cuntz semigroups of stably close $C^*$-algebras.
\begin{enumerate}
\item[{\bf P1}] $S$ contains a zero element; \vspace{1mm}
\item[{\bf P2}] the order on $S$ is compatible with addition:  $x_1 + x_2
\leq y_1 + y_2$ whenever $x_i \leq y_i , i \in \{1, 2\}$; \vspace{1mm}
\item[{\bf P3}] every countable upward directed set in $S$ has a supremum; \vspace{1mm}
\item[{\bf P4}] for each $x\in S$, the set $x_\ll = \{y \in S \ |  \ y \ll x\}$ is upward directed with respect to both
$\leq$ and $\ll$, and contains a sequence $(x_n)$ such that
$x_n \ll x_{n+1}$ for every $n \in \mathbb{N}$ and $\sup_n x_n = x$; \vspace{1mm}
\item[{\bf P5}] the operation of passing to the supremum of a countable upward directed set and the relation
$\ll$ are compatible with addition:  if $S_1$ and $S_2$ are countable upward directed sets in $S$, then
$S_1 + S_2$ is upward directed and $\sup (S_1 + S_2) = \sup S_1 + \sup S_2$, and if $x_i \ll y_i$ for $i \in \{1, 2\}$, then
$x_1 + x_2 \ll y_1 + y_2$ .
\end{enumerate}

\noindent
We say that a sequence $(x_n)$ in $S \in \textbf{Cu}$ is {\it rapidly increasing} if $x_n \ll x_{n+1}$ for all $n$.
We take the scale $\Sc(\Cu(A))$ to be the subset of $\Cu(A)$ obtained as supremums of increasing sequences from $A^+$.

For objects $S$ and $T$ from $\textbf{Cu}$, the map $\phi\colon S\to T$ is a morphism in the category $\mathbf{Cu}$ if
\begin{enumerate}
\item[{\bf M1}] $\phi$ is order preserving; \vspace{1mm}
\item[{\bf M2}] $\phi$ is additive and maps 0 to 0; \vspace{1mm}
\item[{\bf M3}] $\phi$ preserves the suprema of increasing sequences; \vspace{1mm}
\item[{\bf M4}] $\phi$ preserves the relation $\ll$. \vspace{1mm}
\end{enumerate}

\subsection{The Kadison-Kastler metric}

Let us recall the definition of the metric on the collection of all $C^*$-subalgebras of a $C^*$-algebra introduced in \cite{KK:AJM}.

\begin{definition}\label{close}
Let $A,B$ be $C^*$-subalgebras of a $C^*$-algebra $C$.  Define a metric $d$ on all such pairs as follows:  $d(A,B)<\gamma$ if and only if for each $x$ in the unit ball of $A$ or $B$, there is $y$ in the unit ball of the other algebra such that $\|x-y\|<\gamma$.
\end{definition}

In this definition, we typically take $C=\mathcal B(\mathcal{H})$ for a Hilbert space $\mathcal{H}$.  The complete, or stabilised version, of the Kadison-Kastler metric is defined by $d_{\cb}(A,B)=d(A\otimes \mathcal{K},B \otimes \mathcal{K})$ inside $C \otimes \mathcal{K}$ (here $\mathcal{K}$ is the compact operators on $\ell^2(\mathbb{N}))$; the notion $d_{\cb}$ is used for this metric as $d_{\cb}(A,B)\leq\gamma$ is equivalent to the condition that $d(\mathrm{M}_n(A), \mathrm{M}_n(B))\leq\gamma$ for every $n$.  

We repeatedly use the standard fact that if $d(A,B)<\gamma$, then given a positive contraction $a\in A_1^+$, there exists a positive contraction $b\in B_1^+$ with $\|a-b\|<2\alpha$.  One way of seeing this is to use the hypothesis $d(A,B)<\gamma$ to approximate $a^{1/2}$ by some $c\in B_1$ with $\|a^{1/2}-c\|<\gamma$. Then take $b=cc^*$ so that $$
\|a-b\|\leq \|a^{1/2}(a^{1/2}-c)\|+\|(a^{1/2}-c^*)c\|<2\gamma.
$$

There is also a one-sided version of closeness introduced by Christensen in \cite{C:Acta}, which is referred to as a {\it $\gamma$-near inclusion}:

\begin{definition}\label{closeinclusion}
Let $A,B$ be $C^*$-subalgebras of a $C^*$-algebra $C$ and let $\gamma>0$.  Write $A \subseteq_\gamma B$ if for every $x$ in the unit ball of $B$, there is $y \in B$ such that $\|x-y\|\leq \gamma$ (note that $y$ need not be in the unit ball of $B$).  Write $A\subset_\gamma B$ if there exists $\gamma'<\gamma$ with $A\subseteq_{\gamma'}B$.  As with the Kadison-Kastler metric, we also use complete, or stabilised, near inclusions: write $A\subseteq_{\mathrm{cb},\gamma}B$ when $A\otimes M_n\subseteq_{\gamma}B\otimes M_n$ for all $n$, and $A\subset_{\mathrm{cb},\gamma}B$ when there exists $\gamma'<\gamma$ with $A\subseteq_{\mathrm{cb},\gamma}B$.
\end{definition}

\section{Very rapidly increasing sequences and the Cuntz semigroup}\label{Sect3}

We start by noting that, for close $C^*$-algebras of real rank zero, an isomorphism between their Cuntz semigroups can be deduced from existing results in the literature. For a $C^*$-algebra $A$, let $V(A)$ be the Murray and von Neumann semigroup of equivalence classes of projections in $\bigcup_{n=1}^\infty A\otimes M_n$ and write $\Sigma(V(A))=\{[p]\in V(A)\mid p=p^2=p^*\in A\}$. This is a local semigroup in the sense that if $p$, $q$, $p'$ and $q'$ are projections in $A$ with $p'q'=0$ and $p\sim p'$, $q\sim q'$, then $[p]+[q]=[p'+q']\in\Sigma(V(A))$. Recall that, if $A$ has real rank zero, then the work of Zhang \cite{Z:PJM} shows that $V(A)$ has the Riesz refinement property. By definition, this means that whenever $x_1,\dots,x_n,y_1,\dots,y_m\in V(A)$ satisfy $\sum_ix_i=\sum_jy_j$, then there exist $z_{i,j}\in V(A)$ with $\sum_jz_{i,j}=x_i$ and $\sum_iz_{i,j}=y_j$ for each $i,j$. The case $m=n=2$ of this can be found as \cite[Lemma 2.3]{AP:PAMS}, and the same proof works in general.

The Cuntz semigroup of a $C^*$-algebra of real rank zero is completely determined by its semigroup of projections (see \cite{Pe:IJM} when $A$ additionally has stable rank one and \cite{ABP:IJM} for the general case). We briefly recall how this is done. An \emph{interval} in $V(A)$ is a non-empty, order hereditary and upward directed subset $I$ of $V(A)$, which is said to be countably generated provided there is an increasing sequence $(x_n)$ in $V(A)$ such that $I=\{x\in V(A)\mid x\leq x_n\text{ for some }n\}$. The set of countably generated intervals is denoted by $\Lambda_{\sigma}(V(A))$, and it has a natural semigroup structure. Namely, if $I$ and $J$ have generating sequences $(x_n)$ and $(y_n)$ respectively, then $I+J$ is the interval generated by $(x_n+y_n)$. Given a positive element $a$ in $A\otimes\K$ in a $\sigma$-unital $C^*$-algebra of real rank zero $A$, put $I(a)=\{[p]\in V(A)\mid p\precsim a\}$. The correspondence $[a]\mapsto I(a)$ defines an ordered semigroup isomorphism $\Cu(A)\cong\Lambda_{\sigma}(V(A))$. 
\begin{theorem}
\label{thm:rr0} Let $A$ and $B$ be $\sigma$-unital $C^*$-subalgebras of a $C^*$-algebra $C$, with $d(A,B)<1/8$. If $A$ has real rank zero, then $B$ also has real rank zero and $\Cu(A)\cong\Cu(B)$.
\end{theorem}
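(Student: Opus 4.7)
My plan is to combine three ingredients: (i) real rank zero transfers from $A$ to $B$; (ii) there is an isomorphism $V(A)\cong V(B)$ of projection semigroups; (iii) this lifts to $\Cu(A)\cong\Cu(B)$ via the identification $\Cu(\cdot)\cong\Lambda_{\sigma}(V(\cdot))$ recalled above. For (i), given a self-adjoint contraction $b\in B$ and $\eps>0$, the hypothesis $d(A,B)<1/8$ together with self-adjoint averaging yields a self-adjoint $a\in A$ with $\|a-b\|<1/8$. Real rank zero of $A$ produces a finite-spectrum self-adjoint $\sum_i\lambda_ip_i\in A$ close to $a$, with the $p_i$ pairwise orthogonal projections. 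For each $p_i$ there is a self-adjoint $c_i\in B$ within $1/8$ of $p_i$; the spectrum of $c_i$ lies in $[-1/8,1/8]\cup[7/8,9/8]$, so continuous functional calculus with a suitable cutoff produces a projection $q_i\in B$ with $\|q_i-p_i\|<1/4$. The $q_i$ are pairwise almost orthogonal, and a standard projection perturbation lemma converts this to genuine orthogonality; summing gives a finite-spectrum self-adjoint in $B$ close to $b$.

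For (ii), define $\phi\colon V(A)\to V(B)$ first on $\Sigma(V(A))$ by sending $[p]$ with $p\in A$ to $[q]$, where $q\in B$ is produced as above; the bound $\|p-q\|<1/2$ yields MvN equivalence of $p$ and $q$ inside $C$, and two close projections in $A$ produce close (hence equivalent) projections in $B$, so $\phi$ is well-defined and additive on $\Sigma(V(A))$. To extend $\phi$ to all of $V(A)$, fix an approximate unit of projections $(e_k)\subset A$ (available from real rank zero and $\sigma$-unitality): for any $[p]\in V(A)$ with $p\in M_n(A)$ one has $[p]\leq n[e_k]$ for some $k$, and Zhang's Riesz refinement \cite{Z:PJM} splits $[p]=\sum_{j=1}^{n}[p_j]$ with each $[p_j]\in\Sigma(V(A))$. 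Set $\phi([p])=\sum_j\phi([p_j])$, and check independence of the decomposition by a further application of Riesz refinement. Symmetry (using real rank zero of $B$ from step (i)) yields an inverse, so $\phi$ is an iso.

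For (iii), the identification $\Cu(A)\cong\Lambda_{\sigma}(V(A))$ transports $\phi$ to the Cuntz semigroup: an interval in $\Lambda_\sigma(V(A))$ generated by an increasing sequence $(x_n)\subset V(A)$ maps to the interval in $\Lambda_\sigma(V(B))$ generated by $(\phi(x_n))$. This map is additive, order-preserving, and preserves suprema of countable upward directed sets as well as the relation $\ll$, with inverse induced by $\phi^{-1}$, so it gives $\Cu(A)\cong\Cu(B)$ in $\mathbf{Cu}$.

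The main obstacle is step (ii), extending $\phi$ from $\Sigma(V(A))$ to all of $V(A)$. The Kadison-Kastler distance $d(A,B)<1/8$ controls closeness only at the level of $A$ and $B$ themselves, not of their matrix amplifications $M_n(A)$ and $M_n(B)$; naive entry-wise estimates degrade with $n$ and preclude direct projection perturbation in $M_n(A)$ for $n\geq 2$. Real rank zero combined with Zhang's Riesz refinement is the precise tool reducing every projection class in $V(A)$ to a sum of classes from $\Sigma(V(A))$, where the bound $d(A,B)<1/8$ applies directly.
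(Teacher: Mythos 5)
Your overall route---transfer real rank zero from $A$ to $B$, build an isomorphism $\Sigma(V(A))\to\Sigma(V(B))$ from nearby projections, extend it to $V(A)$ using Zhang's work and Riesz refinement, and then transport everything through $\Cu(\cdot)\cong\Lambda_\sigma(V(\cdot))$---is exactly the paper's strategy. The gaps are in the two ingredients you chose to prove by hand instead of citing. Step (i) fails as written: real rank zero of $B$ requires approximating a self-adjoint $b\in B$ by finite-spectrum self-adjoints to within an \emph{arbitrary} $\eps>0$, but your scheme begins by replacing $b$ with some self-adjoint $a\in A$ satisfying only $\|a-b\|<1/8$, and this error is of fixed size. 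However well you then approximate $a$ inside $A$, and however carefully you transfer the spectral projections back to $B$ (each transfer costs a further error of order $1/4$, multiplied by the number of spectral values when you re-sum $\sum_i\lambda_iq_i'$), the resulting finite-spectrum element of $B$ is only within a fixed distance of $b$, never within $\eps$. A fixed-tolerance approximation by finite-spectrum elements does not imply real rank zero (in $C[0,1]$, for instance, the finite-spectrum self-adjoints are the constants, and no amount of ``$c$-density'' statements of this kind recovers density). This is why the paper simply invokes \cite[Theorem 6.3]{CSSW:GAFA}: the known transfer argument works with projections, which tolerate a fixed perturbation of size $<1/2$ and can then be corrected by unitaries close to $1$, rather than with $\eps$-approximants of self-adjoints, which cannot survive the fixed gap between the algebras.

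In step (ii) the well-definedness of $\phi$ on $\Sigma(V(A))$ is also not justified: if $p\sim p'$ in $A$ and $q,q'\in B$ are the nearby projections, you argue that $q\sim p\sim p'\sim q'$ ``inside $C$'', but Murray--von Neumann equivalence in the ambient algebra $C$ says nothing about equivalence in $B$, which is what $[q]=[q']$ in $V(B)$ means. One has to transfer the partial isometry $v\in A$ implementing $p\sim p'$ to a partial isometry in $B$ between $q$ and $q'$ (and similarly handle orthogonal sums to get additivity on the local semigroup); this is precisely the content of \cite[Theorem 2.6]{PR:CJM}, which the paper cites and which is where the hypothesis $d(A,B)<1/8$ enters. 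The rest of your argument is sound and matches the paper: your decomposition of $[p]$, $p\in M_n(A)$, into classes from $\Sigma(V(A))$ via an approximate unit of projections and Riesz decomposition is an acceptable substitute for the paper's direct appeal to \cite[Theorem 3.2]{Z:PJM}, the independence check via refinement is the same, and the passage to $\Cu(A)\cong\Cu(B)$ through $\Lambda_\sigma(V(\cdot))$ is as in the paper. Note also that the ``main obstacle'' you identify (matrix amplifications) is indeed handled correctly by the refinement argument; the genuine obstacles are the two points above.
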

\begin{proof}
That $B$ has real rank zero follows from \cite[Theorem 6.3]{CSSW:GAFA}. We know from \cite[Theorem 2.6]{PR:CJM} that there is an isomorphism of local semigroups $\Phi_1\colon\Sigma(V(A))\to \Sigma(V(B))$ (with inverse, say, $\Psi_1$). This is defined as $\Phi_1[p]=[q]$, where $q$ is a projection in $B$ such that $\Vert p-q\Vert<1/8$. Given $p\in M_n(A)$, by \cite[Theorem 3.2]{Z:PJM} we can find projections $\{p_i\}_{i=1,\ldots,n}$ in $A$ such that $[p]=\sum_i [p_i]$. Now extend $\Phi_1$ to $\Phi\colon V(A)\to V(B)$ by $\Phi([p])=\sum_i \Phi_1([p_i])$. Let us check that $\Phi$ is well defined. If $[p]=\sum_i[p_i]=\sum_j[q_j]$ for projections $p_i$ and $q_j$ in $A$, then use refinement to find elements $a_{ij}\in V(A)$ such that $[p_i]=\sum_ja_{ij}$ and $[q_j]=\sum_ia_{ij}$. We may also clearly choose projections $z_{ij},z_{ij}'\in A$ such that $a_{ij}=[z_{ij}]=[z_{ij}']$, and such that $z_{ij}\perp z_{ik}$ if $j\neq k$, and $z_{ij}'\perp z_{lj}'$ if $i\neq l$. Then:
\begin{align*}
\sum\Phi_1([p_i])&=\sum_i\sum_j\Phi_1([z_{ij}])\\
&=\sum_i\sum_j\Phi_1([z_{ij}'])=\sum_j\sum_i\Phi_1([z_{ij}'])=\sum_j\Phi_1([q_j])\,.
\end{align*}
It is clear that $\Phi$ is additive and that $\Phi_{|\Sigma(V(A))}=\Phi_1$. Using $\Psi_1$, we construct an additive map $\Psi\colon V(B)\to V(A)$, with $\Psi_{|\Sigma(V(B))}=\Psi_1$.  Since $\Psi_1\circ\Phi_1=\id_{\Sc(V(A))}$, it follows that $\Psi\circ\Phi=\id_{V(A)}$. Similarly $\Phi\circ\Psi=\id_{V(B)}$.

Now, since $\Cu(A)\cong\Lambda_{\sigma}(V(A))$ and $\Cu(B)\cong\Lambda_{\sigma}(V(B))$, it follows that $\Cu(A)$ is isomorphic to $\Cu(B)$.
\end{proof}

We turn now to very rapidly increasing sequences. These provide the key tool we use to transfer information between close algebras at the level of the Cuntz semigroup.

\begin{definition}
Let $A$ be a $C^*$-algebra. We say that a rapidly increasing sequence $(a_n)_{n=1}^\infty$ in $A^+_1$ is \emph{very rapidly increasing} if given $\eps>0$ and $n\in\mathbb N$, there exists $m_0\in\mathbb N$ such that for $m\geq m_0$, there exists $v\in A_1$ with $\|(va_mv^*)a_n-a_n\|<\eps$.  Say that a very rapidly increasing sequence $(a_n)_{n=1}^\infty$ in $(A\otimes\K)_+^1$ \emph{represents} $x\in\Cu(A)$ if $\sup_n\langle a_n\rangle =x$.
\end{definition}

The following two functions are used in the sequel to manipulate very rapidly increasing sequences.  Given $a\in A_+$ and $\eps>0$, write $(a-\eps)_+$ for $h_\eps(a)$, where $h_\eps$ is the continuous function $h_\eps(t)=\max(0,t-\eps)$. For $0\leq\beta<\gamma$, let $g_{\beta,\gamma}$ be the piecewise linear function on $\mathbb R$ given by 
\begin{equation}\label{G}
g_{\beta,\gamma}(t)=\begin{cases}0,&t\leq \beta;\\\frac{t-\beta}{\gamma-\beta},&\beta<t<\gamma;\\1,&t\geq \gamma.\end{cases}
\end{equation}

With this notation, the standard example of a very rapidly increasing sequence is given by $(g_{2^{-(n+1)},2^{-n}}(a))_{n=1}^\infty$ for $a\in A^+_1$. This sequence represents $\langle a\rangle$.  In this way every element of the Cuntz semigroup of $A$ is represented by a very rapidly increasing sequence from $(A\otimes\K)^+_1$.  In the next few lemmas we develop properties of very rapidly increasing sequences, starting with a technical observation.

\begin{lemma}\label{Key}
 Let $A$ be a $C^*$-algebra and let $a,b\in A_1^+$ and $v\in A_1$ satisfy $\|v^*bva-a\|\leq\delta$ for some $\delta>0$. Suppose that $0<\beta<1$ and $\gamma\geq 0$ satisfy $\gamma+\delta\beta^{-1}<1$, then
$\langle (a-\beta)_+\rangle \leq\langle (b-\gamma)_+\rangle$ in $\Cu(A)$. 
\end{lemma}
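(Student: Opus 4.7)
The plan is to apply Rørdam's approximation lemma to a carefully chosen factorization, exploiting that the hypothesis $\|v^*bva - a\| \leq \delta$ says $c := v^*bv$ is approximately a unit for $a$. Heuristically, on the spectral part of $a$ lying above $\beta$ the element $a$ behaves like an invertible of norm bounded by $1/\beta$, so one expects $c$ to be approximately $1$ there with error of order $\delta/\beta$; together with the error $\gamma$ arising when $b$ is replaced by $(b-\gamma)_+$, this matches precisely the hypothesis $\gamma + \delta\beta^{-1} < 1$.

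Concretely, fix $\beta' > \beta$ close enough to $\beta$ that $\gamma + \delta/\beta' < 1$, and set $e := g_{\beta,\beta'}(a) \in A_1^+$. Since $e$ and $(a-\beta)_+$ have the same open support in $\sigma(a)$, they are Cuntz equivalent. Consider
\[
y := (b-\gamma)_+^{1/2}\, v\, e \in A,
\]
so that $y^*y = e\,v^*(b-\gamma)_+\,v\,e$ and $yy^* = (b-\gamma)_+^{1/2}\, v\,e^2\,v^*\,(b-\gamma)_+^{1/2}$. The latter lies in the hereditary subalgebra of $(b-\gamma)_+$, so $\langle y^*y\rangle = \langle yy^*\rangle \leq \langle (b-\gamma)_+\rangle$.

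To bound $\|y^*y - e^2\|$, decompose $b = (b-\gamma)_+ + (b\wedge\gamma)$ with $\|b\wedge\gamma\|\leq\gamma$, obtaining
\[
y^*y - e^2 \;=\; e(c-1)e \;-\; e\,v^*(b\wedge\gamma)v\,e,
\]
whose second summand has norm at most $\gamma$. For the first, factor $e = a\,r(a)$ where $r(t) := g_{\beta,\beta'}(t)/t$ extends continuously with $r(0)=0$; a direct computation gives $\|r\|_\infty = 1/\beta'$. Hence $\|(c-1)e\| = \|((c-1)a)\,r(a)\| \leq \delta/\beta'$, so $\|e(c-1)e\| \leq \delta/\beta'$ and therefore $\|y^*y - e^2\|\leq \gamma + \delta/\beta'$.

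By Rørdam's approximation lemma, for every $\eta > \gamma + \delta/\beta'$ one has $(e^2-\eta)_+ \precsim y^*y \precsim (b-\gamma)_+$. Since $(e^2-\eta)_+$ has open support $(\beta+\sqrt{\eta}(\beta'-\beta),\,1]\cap\sigma(a)$, it is Cuntz equivalent to $(a-(\beta+\sqrt{\eta}(\beta'-\beta)))_+$. The hypothesis $\gamma + \delta/\beta' < 1$ allows such $\eta<1$; letting $\beta'\to\beta^+$ drives $\sqrt{\eta}(\beta'-\beta)$ to $0$, so $(a-\beta-\varepsilon)_+ \precsim (b-\gamma)_+$ for every $\varepsilon>0$. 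Taking the supremum in $\Cu(A)$ via the identity $\langle (a-\beta)_+\rangle = \sup_{\varepsilon>0}\langle(a-\beta-\varepsilon)_+\rangle$ (a consequence of \textbf{P4}) completes the proof. The main technical point is the factorization $e = a\,r(a)$ with the sharp bound $\|r\|_\infty = 1/\beta'$, which both forces working with $g_{\beta,\beta'}(a)$ rather than $(a-\beta)_+$ directly and is what produces the precise form $\gamma + \delta\beta^{-1}<1$ of the hypothesis.
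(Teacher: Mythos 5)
Your proof is correct, but it takes a genuinely different route from the paper's. The paper argues in one shot inside the bidual: it takes the spectral projection $p\in A^{**}$ of $a$ for $[\beta,1]$, uses the invertibility of $ap$ in $pA^{**}p$ (with inverse of norm at most $\beta^{-1}$) to get $\|pv^*(b-\gamma)_+vp-p\|\leq\gamma+\delta\beta^{-1}$, hence $pv^*(b-\gamma)_+vp\geq\bigl(1-(\gamma+\delta\beta^{-1})\bigr)p$, and then, since $p$ is a unit for $(a-\beta)_+$, obtains the operator inequality $(a-\beta)_+\leq\bigl(1-(\gamma+\delta\beta^{-1})\bigr)^{-1}(a-\beta)_+^{1/2}v^*(b-\gamma)_+v(a-\beta)_+^{1/2}$, which gives $(a-\beta)_+\precsim(b-\gamma)_+$ directly at the level $\beta$, with no limiting step. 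You instead stay inside $A$, replacing the spectral projection by the soft cutoff $e=g_{\beta,\beta'}(a)$ and the bound $\|x\|\leq\beta^{-1}$ by the factorization $e=a\,r(a)$ with $\|r\|_\infty=1/\beta'$, then invoke R\o{}rdam's lemma $\|x-y\|<\eta\Rightarrow(x-\eta)_+\precsim y$ and finish with the approximation $\langle(a-\beta)_+\rangle=\sup_{\eps>0}\langle(a-\beta-\eps)_+\rangle$ as $\beta'\to\beta^+$. All the steps check out: the support computation for $(e^2-\eta)_+$, the estimate $\|y^*y-e^2\|\leq\gamma+\delta/\beta'$, and the availability of $\eta<1$ are exactly where the strict inequality $\gamma+\delta\beta^{-1}<1$ enters, mirroring its role in the paper. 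What each approach buys: the paper's argument is shorter and sharper (a direct domination, no $\eps$-limit), at the cost of working in $A^{**}$; yours avoids the enveloping von Neumann algebra entirely and runs on standard Cuntz-semigroup technology (functional calculus cutoffs plus R\o{}rdam's lemma), at the cost of an extra perturbation parameter $\beta'$ and a limiting argument. The remark that $e$ is Cuntz equivalent to $(a-\beta)_+$ is correct but never used.
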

\begin{proof}
Let $p\in A^{**}$ denote the spectral projection of $a$ for the interval $[\beta,1]$. When $p=0$, then $(a-\beta)_+=0$ and the result is trivial, so we may assume that $p\neq 0$. Then $ap$ is invertible in $pA^{**}p$ with inverse $x$ satisfying $\|x\|\leq \beta^{-1}$.  Compressing $(v^*bva-a)$ by $p$ and multiplying by $x$, we have $\|pv^*bvp-p\|\leq\delta\beta^{-1}$. Thus
$$
\|pv^*(b-\gamma)_+vp-p\|\leq\|(b-\gamma)_+-b\|+\|pv^*bvp-p\|\leq\gamma+\delta\beta^{-1},
$$
and so
$$
pv^*(b-\gamma)_+vp\geq \left(1-(\gamma+\delta\beta^{-1})\right)p.
$$
As $p$ acts as a unit on $(a-\beta)_+$, we have
\begin{align*}
(a-\beta)_+&=(a-\beta)_+^{1/2}p(a-\beta)_+^{1/2}\\
&\leq \left(1-(\gamma+\delta\beta^{-1})\right)^{-1}(a-\beta)_+^{1/2}pv^*(b-\gamma)_+vp(a-\beta)_+^{1/2}\\
&=\left(1-(\gamma+\delta\beta^{-1})\right)^{-1}(a-\beta)_+^{1/2}v^*(b-\gamma)_+v(a-\beta)_+^{1/2}.
\end{align*}
Thus $(a-\beta)_+\precsim (b-\gamma)_+$. 
\end{proof}

The next lemma encapsulates the fact that the element of the Cuntz semigroup represented by a very rapidly increasing sequence $(a_n)_{n=1}^\infty$ of contractions depends only on the behaviour of parts of the $a_n$ with spectrum near $1$.
 
 \begin{lemma}\label{L:VRI3}
Let $(a_n)_{n=1}^\infty$ be a very rapidly increasing sequence in $A^+_1$. Then for each $\lambda<1$, the sequence $(\langle (a_n-\lambda)_+\rangle)_{n=1}^\infty$ has the property that for each $n\in\mathbb N$, there is $m_0\in\mathbb N$ such that for $m\geq m_0$, we have $\langle (a_n-\lambda)_+\rangle\ll\langle (a_m-\lambda)_+\rangle$. Furthermore
\begin{equation}\label{E1}
\sup_n\langle (a_n-\lambda)_+\rangle=\sup_n\langle a_n\rangle.
\end{equation}
\end{lemma}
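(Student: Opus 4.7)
The plan is to use Lemma~\ref{Key} as the quantitative engine, fed by the approximations $\|(va_mv^*)a_n - a_n\| < \delta$ supplied by very rapid increase. I may assume $0 < \lambda < 1$, as the case $\lambda = 0$ gives $(a_n - \lambda)_+ = a_n$ and both assertions collapse to the (very) rapid increase hypothesis.

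For the first assertion, fix $n$ and choose an intermediate parameter $\lambda' \in (\lambda, 1)$, then $\delta > 0$ small enough that $\lambda' + \delta/\lambda < 1$. Very rapid increase produces $m_0$ such that for each $m \geq m_0$ there is $v \in A_1$ with $\|(va_m v^*)a_n - a_n\| < \delta$; setting $w := v^* \in A_1$ this becomes $\|w^* a_m w a_n - a_n\| < \delta$. Lemma~\ref{Key} applied with $a = a_n$, $b = a_m$, $\beta = \lambda$ and $\gamma = \lambda'$ yields $\langle(a_n - \lambda)_+\rangle \leq \langle(a_m - \lambda')_+\rangle$, and since $\lambda' > \lambda$ the standard relation $\langle(a_m - \lambda')_+\rangle \ll \langle(a_m - \lambda)_+\rangle$ completes the proof of $\langle(a_n - \lambda)_+\rangle \ll \langle(a_m - \lambda)_+\rangle$.

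For the supremum equality one direction is immediate from $(a_n - \lambda)_+ \leq a_n$. For the reverse, I would use $\langle a_n\rangle = \sup_{\eps > 0}\langle(a_n - \eps)_+\rangle$, so it suffices to bound each $\langle(a_n - \eps)_+\rangle$ by $\sup_m \langle(a_m - \lambda)_+\rangle$. Given $\eps > 0$, pick $\delta > 0$ with $\delta < \eps(1 - \lambda)$; very rapid increase then produces, for sufficiently large $m$, an element $v \in A_1$ with $\|(va_m v^*)a_n - a_n\| < \delta$, and Lemma~\ref{Key} with $\beta = \eps$ and $\gamma = \lambda$ gives $\langle(a_n - \eps)_+\rangle \leq \langle(a_m - \lambda)_+\rangle$. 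Taking suprema first over $\eps$ and then over $n$ yields the desired equality.

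The only subtlety is arranging the parameter choices so that the constraint $\gamma + \delta/\beta < 1$ of Lemma~\ref{Key} holds, with $\gamma > \beta$ in the first half and with $\gamma = \lambda$ held fixed in the second. Once this bookkeeping is in place, the argument is a direct combination of the very rapid increase hypothesis with Lemma~\ref{Key}, and I do not anticipate any deeper obstruction.
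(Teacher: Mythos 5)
Your proposal is correct and takes essentially the same route as the paper: both parts feed the approximants supplied by very rapid increase into Lemma \ref{Key}, your only deviation being cosmetic in the first part, where you raise the cut on the $a_m$ side to $\lambda'>\lambda$ and invoke $\langle(a_m-\lambda')_+\rangle\ll\langle(a_m-\lambda)_+\rangle$, whereas the paper lowers the cut on the $a_n$ side to $\eps<\lambda$ and uses $\langle(a_n-\lambda)_+\rangle\ll\langle(a_n-\eps)_+\rangle\leq\langle(a_m-\lambda)_+\rangle$. Your treatment of \eqref{E1} (bounding each $\langle(a_n-\eps)_+\rangle$ by $\sup_m\langle(a_m-\lambda)_+\rangle$ via Lemma \ref{Key} with $\beta=\eps$, $\gamma=\lambda$, $\delta<\eps(1-\lambda)$) is exactly the paper's computation.
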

\begin{proof}
Fix $n\in\mathbb N$ and $0<\eps<\lambda$  and take $0<\delta$ small enough that $\lambda+\eps^{-1}\delta<1$. As $(a_n)_{n=1}^\infty$ is very rapidly increasing, there exists $m_0$ such that for $m\geq m_0$, there exists $v\in A_1$ with $\|(v^*a_mv)a_n-a_n\|<\delta$.   Lemma \ref{Key} gives
$$
\langle(a_n-\eps)_+\rangle\leq\langle (a_m-\lambda)_+\rangle,
$$
so that $\langle (a_n-\lambda)_+\rangle\ll\langle a_m-\lambda)_+\rangle$ as $\eps<\lambda$. This shows that $(\langle (a_r-\lambda)_+\rangle)_{r=1}^\infty$ is upward directed and that
$$
\langle (a_n-\delta)_+\rangle\leq \sup_r\langle(a_r-\lambda)_+\rangle,
$$
for all $n$ and all $\eps>0$, from which (\ref{E1}) follows.
\end{proof}

We can modify elements sufficiently far down a very rapidly increasing sequences with contractions so that they almost act as units for positive contractions dominated in the Cuntz semigroup by the sequence.

\begin{lemma}\label{L:VRI1}
Let $A$ be a $C^*$-algebra.
\begin{enumerate}[(1)]
\item  Suppose that $a,b\in A_1^+$ satisfy $a\precsim b$. Then for all $\eps>0$, there exists $v\in A$ with $\|v^*bva-a\|\leq\eps$ and $\|v^*bv\|\leq 1$.\label{L:VRI1:1}
\item Let $(a_n)_{n=1}^\infty$ be a very rapidly increasing sequence in $A^+_1$ and suppose $a\in A_1^+$ satisfies $\langle a\rangle \ll\sup\langle a_n\rangle$. Then, for every $\eps>0$, there exists $m_0\in\mathbb N$ such that for $m\geq m_0$, there exists $v\in A_1$ with $\|(v^*a_mv)a-a\|<\eps$.\label{L:VRI1:2}
\end{enumerate}
\end{lemma}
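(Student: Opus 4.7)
For (1), the observation is that $v^*bv$ only needs to act as a near-unit on $a$, not approximate $a$ itself. Choose a continuous function $f\colon[0,1]\to[0,1]$ with $f(0)=0$ and $f(t)=1$ for $t\geq\eta$, for some $\eta<\varepsilon/2$; functional calculus then yields $\|f(a)a-a\|\leq\eta$. Since $f(a)\in\overline{aAa}$, we have $f(a)\precsim a\precsim b$, and R\o{}rdam's lemma produces $v\in A$ with $v^*bv=(f(a)-\eps)_+$ for any prescribed $\eps>0$. Then $\|v^*bv\|\leq 1$, and
\[
\|v^*bv\cdot a-a\|\leq\|(f(a)-\eps)_+-f(a)\|\cdot\|a\|+\|f(a)a-a\|\leq\eps+\eta<\varepsilon
\]
once $\eps$ is small enough.

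For (2), the plan is to bootstrap (1) using the very rapidly increasing hypothesis via a spectral cutoff of $a_{n_0}$. Fix $\lambda\in(0,1)$. Lemma~\ref{L:VRI3} gives $\sup_n\langle(a_n-\lambda)_+\rangle=\sup_n\langle a_n\rangle$, with $(\langle(a_n-\lambda)_+\rangle)_n$ eventually rapidly increasing, so $\langle a\rangle\ll\sup_n\langle a_n\rangle$ yields some $n_0$ with $a\precsim(a_{n_0}-\lambda)_+$. Applying (1) with $b=(a_{n_0}-\lambda)_+$ produces $w\in A$ with $\|w^*(a_{n_0}-\lambda)_+w\|\leq 1$ and $\|w^*(a_{n_0}-\lambda)_+w\cdot a-a\|<\delta_1$ for any $\delta_1>0$ to be chosen later. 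Setting $z:=(a_{n_0}-\lambda)_+^{1/2}w$ one has $z\in A_1$, since $\|z\|^2=\|z^*z\|=\|w^*(a_{n_0}-\lambda)_+w\|\leq 1$, while $z^*z$ still acts as a near-unit on $a$.

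The crucial device is now the spectral cutoff $p_\lambda:=g_{\lambda/2,\lambda}(a_{n_0})$, which equals $1$ on $[\lambda,1]$ and hence satisfies $p_\lambda(a_{n_0}-\lambda)_+^{1/2}=(a_{n_0}-\lambda)_+^{1/2}$, giving $p_\lambda z=z$. Writing $p_\lambda=a_{n_0}h(a_{n_0})$ with $h(t):=g_{\lambda/2,\lambda}(t)/t$ continuous on $[0,1]$ and bounded by some $C_\lambda$ (depending only on $\lambda$), the very rapidly increasing hypothesis furnishes $u\in A_1$, for all $m$ sufficiently large, with $\|u^*a_mu\cdot a_{n_0}-a_{n_0}\|<\delta'$ for any prescribed $\delta'>0$. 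Multiplying on the right by $h(a_{n_0})$ gives $\|u^*a_mu\cdot p_\lambda-p_\lambda\|\leq C_\lambda\delta'$, and hence
\[
\|u^*a_muz-z\|=\|(u^*a_mup_\lambda-p_\lambda)z\|\leq C_\lambda\delta'.
\]
Set $v:=uz\in A_1$; then $\|v^*a_mv-z^*z\|\leq\|z\|\cdot\|u^*a_muz-z\|\leq C_\lambda\delta'$, so $\|v^*a_mv\cdot a-a\|\leq C_\lambda\delta'+\delta_1<\varepsilon$ for $\delta',\delta_1$ chosen small enough.

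The main obstacle is the norm constraint $v\in A_1$: R\o{}rdam's lemma in (1) yields a $w$ whose norm is not controlled, so forming products like $uw$ directly would not produce a contraction. The resolution is to absorb the factor $(a_{n_0}-\lambda)_+^{1/2}$ into $z$, which is automatically a contraction, and then use the spectral cutoff $p_\lambda$ (acting as a left-unit on $z$) to propagate the very rapidly increasing estimate from $a_{n_0}$ to $z$ itself.
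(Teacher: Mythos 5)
Your proposal is correct, but it takes a genuinely different route from the paper in both parts. For (1), the paper picks $r>0$ with $\|a^{1+r}-a\|\leq\eps/2$, approximates $a^r$ by $w^*bw$ straight from the definition of $\precsim$, and rescales $w$ by $(1+\eps/4)^{-1/2}$ to force $\|v^*bv\|\leq 1$; your use of R\o{}rdam's lemma applied to $f(a)\precsim b$ gives the exact identity $v^*bv=(f(a)-\eps')_+$, so the norm bound is automatic (do rename the cut parameter, since in this paper's notation $\eps$ and $\varepsilon$ are the same symbol). For (2), the paper uses $a\precsim a_{m_1}\sim a_{m_1}^2$, applies (1) with $b=a_{m_1}^2$, takes $v=ta_{m_1}w$ (a contraction because $\|w^*a_{m_1}^2w\|\leq 1$), and absorbs the uncontrolled norm of $w$ by shrinking the very-rapid-increase tolerance to $\eps/(2\|w\|)$; you instead cut down to $(a_{n_0}-\lambda)_+$, absorb the square root into the contraction $z=(a_{n_0}-\lambda)_+^{1/2}w$, and transfer the very-rapid-increase estimate from $a_{n_0}$ to the unit $p_\lambda=g_{\lambda/2,\lambda}(a_{n_0})$ for $z$ via the bounded function $h(t)=g_{\lambda/2,\lambda}(t)/t$, at the cost of a factor $C_\lambda$ depending only on $\lambda$. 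Your route relies on Lemma \ref{L:VRI3}, which precedes the present lemma in the paper, so there is no circularity; but note that Lemma \ref{L:VRI3} gives, for each $n$, eventual compact domination rather than the tail of $(\langle(a_n-\lambda)_+\rangle)_n$ being literally rapidly increasing, so to get $a\precsim(a_{n_0}-\lambda)_+$ you should first extract a rapidly increasing subsequence with the same supremum and then invoke the definition of $\ll$. In terms of trade-offs, the paper's argument is shorter and needs only the definition of Cuntz subequivalence, while yours replaces the tolerance-rescaling by explicit functional-calculus control that is independent of $\|w\|$.
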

\begin{proof}
(\ref{L:VRI1:1}). Fix $\eps>0$ and find $r>0$ so that $\|a^{1+r}-a\|\leq\eps/2$.  Now $a^r\precsim b$, so there exists $w\in A$ with $\|a^r-w^*bw\|\leq\eps/4$. Thus $\|w^*bw\|\leq 1+\eps/4$, and so, writing $v=(1+\eps/4)^{-1/2}w$, we have $\|v^*bv\|\leq 1$ and $\|w^*bw-v^*bv\|\leq \eps/4$. As such $\|a^r-v^*bv\|\leq \eps/2$ and so 
$$
\|v^*bva-a\|\leq\|v^*bv-a^r\|\|a\|+\|a^{1+r}-a\|\leq\eps/2+\eps/2=\eps,
$$
as claimed.

(\ref{L:VRI1:2}). As $\langle a\rangle\ll \sup\langle a_n\rangle$, there exists some $m_1\in\mathbb N$ with $a\precsim a_{m_1}\sim a^2_{m_1}$.  Fix $\eps>0$ and by part (\ref{L:VRI1:1}), find $w\in A$ with
 $\|(w^*a_{m_1}^2w)a-a\|<\eps/2$ and $\|w^*a_{m_1}^2w\|\leq 1$.  Now set $\eps'=\eps/(2\|w\|)$ and, as $(a_n)_{n=1}^\infty$ is very rapidly increasing, find some $m_0>m_1$ such that for $m\geq m_0$ there exists  $t\in A_1$ with $\|(t^*a_mt)a_{m_1}-a_{m_1}\|\leq\eps'$.  Given such $m$ and $t$, we have
\begin{align*}
\|(w^*a_{m_1}t^*a_mta_{m_1}w)a-a\|&\leq \|w^*a_{m_1}\|\|(t^*a_mt)a_{m_1}-a_{m_1}\|\|w\|\|a\|\\
&\quad +\|(w^*a_{m_1}^2w)a-a\|\\
&\leq \|w\|\eps'+\eps/2=\eps,
\end{align*}
as $\|w^*a_{m_1}\|\leq 1$.  As such we can take $v=ta_{m_1}w\in A_1$.
\end{proof}

It follows immediately from part (\ref{L:VRI1:2}) above, that two very rapidly increasing sequences representing the same element of the Cuntz semigroup can be intertwined to a single very rapidly increasing sequence.
\begin{proposition}\label{L:VRI2}
Let $(a_n)_{n=1}^\infty,(a_n')_{n=1}^\infty$ be very rapidly increasing sequences in a $C^*$-algebra $A$ representing the same element $x\in\Cu(A)$. Then these sequences can be intertwined after telescoping to form a very rapidly increasing sequence which also represents $x$, i.e. there exists $m_1<m_2<\cdots$ and $n_1<n_2<\cdots$ such that $(a_{m_1},a'_{n_1},a_{m_2},a'_{n_2},\cdots)$ is a very rapidly increasing sequence.
\end{proposition}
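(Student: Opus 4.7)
The plan is to construct the interleaving indices inductively, using rapid increase to secure the order-theoretic condition $b_k \ll b_{k+1}$ at each step, and then to obtain the almost-unit condition for free from Lemma \ref{L:VRI1}(\ref{L:VRI1:2}) applied to each parent sequence separately.

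First, I would observe that since $(a_n)$ and $(a_n')$ are both rapidly increasing with common supremum $x$, every term of either sequence satisfies $\langle a_n\rangle\ll\langle a_{n+1}\rangle\leq x$, so in particular $\langle a_n\rangle\ll x$ and likewise $\langle a_n'\rangle\ll x$.

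Now I would build the indices inductively. Set $m_1=1$. Having chosen $m_1<\cdots<m_j$ and $n_1<\cdots<n_{j-1}$, use $\langle a_{m_j}\rangle\ll x=\sup_n\langle a_n'\rangle$ together with the defining property of $\ll$ to find some $n\geq n_{j-1}+1$ with $\langle a_{m_j}\rangle\leq\langle a_n'\rangle$; then set $n_j=n+1$, which yields
\[
\langle a_{m_j}\rangle\leq\langle a_n'\rangle\ll\langle a_{n_j}'\rangle,
\]
so $\langle a_{m_j}\rangle\ll\langle a_{n_j}'\rangle$. Symmetrically, pick $m_{j+1}>m_j$ with $\langle a_{n_j}'\rangle\ll\langle a_{m_{j+1}}\rangle$. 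The resulting interleaved sequence $(b_k):=(a_{m_1},a_{n_1}',a_{m_2},a_{n_2}',\ldots)$ is then rapidly increasing by construction, and since $(\langle a_{m_j}\rangle)$ already has supremum $x$, so does $(\langle b_k\rangle)$, hence $(b_k)$ represents $x$.

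It remains to verify that $(b_k)$ is very rapidly increasing. Fix $k\in\mathbb N$ and $\eps>0$. Because $b_k$ appears as a term of a rapidly increasing sequence bounded above by $x$, we have $\langle b_k\rangle\ll x=\sup_n\langle a_n\rangle=\sup_n\langle a_n'\rangle$. Applying Lemma \ref{L:VRI1}(\ref{L:VRI1:2}) to the sequence $(a_n)$ (with $b_k$ in the role of $a$) yields $M_1\in\mathbb N$ such that for every $m\geq M_1$ some $v\in A_1$ satisfies $\|(v^*a_mv)b_k-b_k\|<\eps$; applying it to $(a_n')$ yields an analogous $M_2$. Replacing $v$ by $v^*$ converts this to the form $\|(va_mv^*)b_k-b_k\|<\eps$ used in the definition. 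Since $m_j,n_j\to\infty$, there is $k_0$ beyond which every term $b_{k'}$ is either an $a_{m_j}$ with $m_j\geq M_1$ or an $a_{n_j}'$ with $n_j\geq M_2$, so the almost-unit condition holds uniformly for all $k'\geq k_0$.

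The only real technical point is the bookkeeping in the inductive step to guarantee both that $n_j>n_{j-1}$ and that $\langle a_{m_j}\rangle\ll\langle a_{n_j}'\rangle$ strictly (rather than merely $\leq$); this is handled cleanly by taking one extra step along the parent rapidly increasing sequence, as above. No further obstacle is expected, since the very rapidly increasing condition on the interleaved sequence is essentially inherited from each parent via Lemma \ref{L:VRI1}(\ref{L:VRI1:2}).
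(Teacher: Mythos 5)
Your proof is correct and follows essentially the same route as the paper, which deduces the proposition directly from Lemma \ref{L:VRI1}(\ref{L:VRI1:2}); your inductive choice of indices and the final two-case check merely make explicit the bookkeeping the paper leaves implicit.
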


Given a rapidly increasing sequence in $A^+_1$, we can use the functions $g_{\beta,\gamma}$ from (\ref{G}) to push the spectrum of the elements of the sequence out to $1$ and extract a very rapidly rapidly increasing sequence representing the same element of the Cuntz semigroup.
\begin{lemma}\label{L:VRI4}
Let $A$ be a $C^*$-algebra and $(a_n)_{n=1}^\infty$ be a rapidly increasing sequence in $A^+_1$. Then there exists a sequence $(m_n)_{n=1}^\infty$ in $\mathbb N$ such that the sequence $(g_{2^{-(m_n+1)},2^{-m_n}}(a_n))_{n=1}^\infty$ is very rapidly increasing and $$\sup_n\langle g_{2^{-(m_n+1)},2^{-m_n}}(a_n)\rangle=\sup_n\langle a_n\rangle.$$  In particular, every element of the scale $\Sc(\Cu(A))$ can be expressed as a very rapidly increasing sequence of elements from $A_1^+$.
\end{lemma}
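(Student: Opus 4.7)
The plan is to build $m_1<m_2<\cdots$ by induction so that the elements $b_n:=g_{2^{-(m_n+1)},2^{-m_n}}(a_n)$ satisfy, at each stage, both the Cuntz compact-containment $\langle b_n\rangle\ll\langle b_{n+1}\rangle$ and a stronger predecessor-unit property, namely that for each $k\leq n$ there is $v\in A_1$ with $\|v^*b_{n+1}vb_k-b_k\|<2^{-n}$. The main difficulty is the $A_1$ condition on $v$: Lemma \ref{L:VRI1}(\ref{L:VRI1:1}) only yields an element with $\|v^*bv\|\leq 1$, without controlling $\|v\|$ itself. The conceptual trick is to appeal to the \emph{standard} very rapidly increasing sequence $(g_{2^{-(j+1)},2^{-j}}(a_{n+1}))_{j=1}^\infty$ sitting inside the functional calculus of $a_{n+1}$ and invoke Lemma \ref{L:VRI1}(\ref{L:VRI1:2}) with it, which does deliver contractions directly.

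\textbf{Inductive step.} Having chosen $m_1<\cdots<m_n$, use $\langle a_n\rangle\ll\langle a_{n+1}\rangle$ to fix $\varepsilon_n>0$ with $\langle a_n\rangle\leq\langle(a_{n+1}-\varepsilon_n)_+\rangle$. For each $k\leq n$, rapid increase of $(a_n)$ forces $\langle b_k\rangle\leq\langle a_k\rangle\ll\langle a_{n+1}\rangle$, so Lemma \ref{L:VRI1}(\ref{L:VRI1:2}) applied to the standard sequence above furnishes $j_0(k)\in\mathbb N$ such that for $j\geq j_0(k)$ there is $v\in A_1$ with $\|v^*g_{2^{-(j+1)},2^{-j}}(a_{n+1})v\cdot b_k-b_k\|<2^{-n}$. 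Choose $m_{n+1}>m_n$ satisfying $2^{-(m_{n+1}+1)}<\varepsilon_n$ and $m_{n+1}\geq\max_{k\leq n}j_0(k)$.

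\textbf{Verification.} Using that $g_{\beta,\gamma}(a)\sim(a-\beta)_+$ in $\Cu(A)$ and the standard fact $(a-\gamma)_+\ll(a-\beta)_+$ for $\beta<\gamma$, the choice $2^{-(m_{n+1}+1)}<\varepsilon_n$ gives
\[
\langle b_n\rangle\leq\langle a_n\rangle\leq\langle(a_{n+1}-\varepsilon_n)_+\rangle\ll\langle(a_{n+1}-2^{-(m_{n+1}+1)})_+\rangle=\langle b_{n+1}\rangle,
\]
yielding both rapid increase and, on taking suprema, $\sup_n\langle a_n\rangle\leq\sup_n\langle b_n\rangle$ (the reverse is automatic from $b_n\precsim a_n$). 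Very rapid increase is now built-in: given $n$ and $\varepsilon>0$, pick $m_0>n$ with $2^{-(m_0-1)}<\varepsilon$; for every $m\geq m_0$, step $m$ of the construction (with $k=n$) provides $v\in A_1$ with $\|v^*b_mvb_n-b_n\|<2^{-(m-1)}<\varepsilon$, verifying Definition \ref{L:VRI1} after relabelling $v\leftrightarrow v^*$.

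\textbf{Scale statement.} For $x\in\Sc(\Cu(A))$ with $x=\sup_n\langle c_n\rangle$ and $(c_n)$ increasing in $A^+$, set $f(t)=\min(t,1)$ and $d_n=(f(c_n)-1/n)_+\in A_1^+$. From $f(c_n)\leq f(c_{n+1})$ and the identity $\langle f(c_n)\rangle=\langle c_n\rangle$, a routine diagonal argument shows $(d_n)$ is rapidly increasing in $A_1^+$ with $\sup_n\langle d_n\rangle=x$; applying the first part of the lemma to $(d_n)$ produces the required very rapidly increasing sequence in $A_1^+$ representing $x$.
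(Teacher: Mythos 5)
For the main statement your induction is correct and is essentially the paper's own argument: like the paper, you choose $m_{n+1}$ large enough that (a) $\langle a_n\rangle$ is dominated by $\langle g_{2^{-(m_{n+1}+1)},2^{-m_{n+1}}}(a_{n+1})\rangle$ (you phrase this via $\langle a_n\rangle\leq\langle (a_{n+1}-\varepsilon_n)_+\rangle$ and the cutoff comparison, the paper via the increasing sequence $\langle g_{2^{-(m+1)},2^{-m}}(a_{n+1})\rangle$), which yields both rapid increase of the new sequence and equality of the suprema, and (b) Lemma \ref{L:VRI1}(\ref{L:VRI1:2}), applied to the standard very rapidly increasing sequence inside $C^*(a_{n+1})$, produces the contractions witnessing very rapid increase relative to all earlier terms. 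This is exactly the paper's proof, including the device of using the standard sequence of $a_{n+1}$ to get $v\in A_1$ rather than Lemma \ref{L:VRI1}(\ref{L:VRI1:1}).

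The flaw is in your final paragraph on the scale statement (which the paper leaves implicit). You assert $f(c_n)\leq f(c_{n+1})$ for $f(t)=\min(t,1)$; this needs $f$ to be operator monotone, and $\min(t,1)$ is not (no non-affine piecewise linear function is), so $c_n\leq c_{n+1}$ does not give $f(c_n)\leq f(c_{n+1})$. Moreover, with the cutoffs $1/n$ fixed in advance, the ``routine diagonal argument'' has to deliver precisely $(c_n-1/n)_+\precsim (c_{n+1}-\tfrac{1}{n+1})_+$, which does not follow from soft facts alone (compact containment only gives $(c_n-1/n)_+\precsim (c_{n+1}-\delta)_+$ for \emph{some} $\delta>0$, and a small $\delta$ lands you above, not below, $(c_{n+1}-\tfrac{1}{n+1})_+$). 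The repair is standard: either keep the operator inequality $c_n\leq c_{n+1}$ and invoke the well-known comparison fact that $a\leq b$ implies $(a-\eps)_+\precsim (b-\eps)_+$ (noting $\langle (f(c_n)-\eta)_+\rangle=\langle (c_n-\eta)_+\rangle$ for $0<\eta<1$, so operator monotonicity of $f$ is never needed), or use the operator monotone function $t\mapsto t(1+t)^{-1}$ in place of $\min(t,1)$ together with that fact, or abandon fixed cutoffs and choose $\eps_n$ recursively using $\langle (f(c_n)-\eps_n)_+\rangle\ll\langle c_n\rangle\leq\langle c_{n+1}\rangle=\sup_{\delta>0}\langle (f(c_{n+1})-\delta)_+\rangle$. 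The last route is also the one you need if ``increasing sequence from $A^+$'' in the definition of $\Sc(\Cu(A))$ is read as increasing only in the Cuntz order, in which case the operator inequality $c_n\leq c_{n+1}$ is not available at all and the fixed choice $\eps_n=1/n$ genuinely fails.
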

\begin{proof}
We will construct the $m_n$ so that $a_{n-1}\precsim g_{2^{-(m_n+1)},2^{-m_n}}(a_n)$ and for each $1\leq r<n$, there exists $v\in A_1$ with $$
\|(v^*g_{2^{-(m_n+1)},2^{-m_n}}(a_n)v)g_{2^{-(m_r+1)},2^{-m_r}}(a_r)-g_{2^{-(m_r+1)},2^{-m_r}}(a_r)\|<2^{-n}.
$$
Fix $n\in\mathbb N$ and suppose $m_1,\cdots,m_{n-1}$ have been constructed with these properties.  As $(g_{2^{-(m+1)},2^{-m}}(a_n))_{m=1}^\infty$ is a very rapidly increasing sequence representing $\langle a_n\rangle$ and $\langle a_{n-1}\rangle\ll\langle a_n\rangle$, there exists $\widetilde{m_n}$ such that $\langle a_{n-1}\rangle \ll\langle (g_{2^{-(m+1)},2^{-m}}(a_n))\rangle$ for $m\geq \widetilde{m_n}$.  Further, for  $1\leq r<n$, $$\langle g_{2^{-(m_r+1)},2^{-m_r}}(a_r)\rangle\ll\langle a_r\rangle\ll\sup_m\langle (g_{2^{-(m+1)},2^{-m}}(a_n)\rangle$$ and so the required $m_n$ can be found using Part (2) of Lemma \ref{L:VRI1}.  

 The resulting sequence $(g_{2^{-(m_n+1)},2^{-m_n}}(a_n))_{n=1}^\infty$ is very rapidly increasing by construction. As $a_{n-1}\precsim g_{2^{-(m_n+1)},2^{-m_n}}(a_n)\precsim a_n$ for all $n$, we have $\sup_n\langle g_{2^{-(m_n+1)},2^{-m_n}}(a_n)\rangle=\sup_n\langle a_n\rangle$.
\end{proof}

We now consider the situation where we have two close $C^*$-algebras acting on the same Hilbert space.  The following lemma ensures that we can produce a well defined map between the Cuntz semigroups.
\begin{lemma}\label{Lem:Claim}
Let $A,B$ be $C^*$-algebras acting on the same Hilbert space and suppose that $a\in A_1^+$ and $b\in B_1^+$ satisfy $\|a-b\|<2\alpha$ for some $\alpha<1/27$. Suppose that $(a_n)_{n=1}^\infty$ is a very rapidly increasing sequence in $A_1^+$ with $\langle a\rangle\ll\sup\langle a_n\rangle$. Then,  there exists $n_0\in\mathbb N$ with the property that for $n\geq n_0$ and $b_n\in B^+_1$ with $\|b_n-a_n\|<2\alpha$, we have
$$
\langle (b-18\alpha)_+\rangle\ll\langle (b_n-\gamma)_+\rangle\ll\langle (b_n-18\alpha)_+\rangle
$$
in $\Cu(B)$, for all $\gamma$ with $18\alpha<\gamma<2/3$.
\end{lemma}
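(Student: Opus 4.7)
The right-hand compact containment $\langle(b_n-\gamma)_+\rangle\ll\langle(b_n-18\alpha)_+\rangle$ is essentially built in. Since $\gamma>18\alpha$, one may write $(b_n-\gamma)_+=((b_n-18\alpha)_+-(\gamma-18\alpha))_+$, and the general principle $\langle(x-\eta)_+\rangle\ll\langle x\rangle$ in $\mathbf{Cu}$ for any $\eta>0$ yields the claim.

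For the left-hand containment, I would fix an auxiliary value $\gamma'$ with $\gamma<\gamma'<2/3$; the same cutoff principle gives $\langle(b_n-\gamma')_+\rangle\ll\langle(b_n-\gamma)_+\rangle$, so it suffices to establish the non-strict inequality $\langle(b-18\alpha)_+\rangle\le\langle(b_n-\gamma')_+\rangle$ in $\Cu(B)$. The plan is to deduce this by applying Lemma~\ref{Key} inside $B$ with the positions $a\leftrightarrow b$, $b\leftrightarrow b_n$, $\beta=18\alpha$, and cutoff $\gamma'$. That application requires a contraction $w\in B_1$ and a small $\delta>0$ with $\|w^*b_nw\cdot b-b\|\le\delta$ and $\gamma'+\delta/(18\alpha)<1$. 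The condition $\gamma'<2/3$ gives $18\alpha(1-\gamma')>6\alpha$, so any $\delta$ a touch less than $6\alpha$ will do.

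I build the near-witness using the v.r.i.\ structure on the $A$-side. By Lemma~\ref{L:VRI1}(2), for any prescribed $\eps>0$ there exists $n_0$ such that for all $n\ge n_0$ some $v\in A_1$ satisfies $\|v^*a_nv\cdot a-a\|<\eps$. A triangle-inequality estimate using $\|a-b\|<2\alpha$ and $\|a_n-b_n\|<2\alpha$ (replace $b$ by $a$, $b_n$ by $a_n$, invoke the v.r.i.\ bound, then restore) gives
\[
\|v^*b_nv\cdot b-b\|<6\alpha+\eps
\]
in the ambient $\mathcal{B}(\mathcal{H})$, so $v^*b_nv\in\mathcal{B}(\mathcal{H})^+_{\le1}$ acts as an approximate unit on $b$ with error just above $6\alpha$.

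The main obstacle, and the technical heart of the argument, is that $v$ lies in $A_1$ but not $B_1$, so $v^*b_nv$ lies in $\mathcal{B}(\mathcal{H})$ rather than $B$ and Lemma~\ref{Key} does not apply verbatim inside $\Cu(B)$. I plan to execute the mechanics of Lemma~\ref{Key} in the ambient $\mathcal{B}(\mathcal{H})$ using the witness $r:=(b_n-\gamma')_+^{1/2}v(b-18\alpha)_+^{1/2}$, producing the operator inequalities $(b-18\alpha)_+\le\lambda r^*r$ (with a positive $\lambda$ depending on $\gamma'$, $\eps$, and $\alpha$) and $rr^*\le(b_n-\gamma')_+$. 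The key remaining step is to transfer these ambient bounds into an actual Cuntz subequivalence in $\Cu(B)$: since $(b-18\alpha)_+,(b_n-\gamma')_+\in B^+$, I would approximate the bridging element $v^*(b_n-\gamma')_+v$ by a positive contraction drawn from $B$ via continuous functional calculus of $b_n$, preserving the crucial lower bound it exhibits on the support of $(b-18\alpha)_+$ and absorbing any perturbation error into the positive slack in $\gamma'<2/3$. A Rordam-style perturbation argument should then close the loop and furnish the desired inequality in $\Cu(B)$.
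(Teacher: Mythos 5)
Your reductions are fine and track the paper's strategy: the right-hand containment is the standard fact $\langle (x-\eta)_+\rangle\ll\langle x\rangle$, and interpolating $\gamma<\gamma'<2/3$ so that it suffices to get $\langle (b-18\alpha)_+\rangle\le\langle (b_n-\gamma')_+\rangle$ from Lemma \ref{Key} with $\beta=18\alpha$ and $\delta$ close to $6\alpha$ is exactly how the paper proceeds. The genuine gap is the step you yourself flag as the technical heart: transferring the estimate into $\Cu(B)$ when the approximate unit $v^*a_nv$ (hence $v^*b_nv$) is built from $v\in A_1$. Your plan to run the mechanics of Lemma \ref{Key} in $\mathcal{B}(\mathcal{H})$ and then descend does not close. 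Operator inequalities of the form $(b-18\alpha)_+\le\lambda r^*r$, $rr^*\le (b_n-\gamma')_+$ with $r\notin B$ only give Cuntz subequivalence in the ambient algebra, and subequivalence in a larger $C^*$-algebra does not pass to a subalgebra containing the two elements (already in $M_2$ the two diagonal rank-one projections are comparable in $M_2$ but not in the diagonal subalgebra $\mathbb{C}\oplus\mathbb{C}$); $B$ is not hereditary in $\mathcal{B}(\mathcal{H})$, so no general principle rescues this. Moreover, your proposed repair---approximating $v^*(b_n-\gamma')_+v$ by continuous functional calculus of $b_n$---cannot work: functional calculus produces elements of $C^*(b_n)$, which commute with $b_n$ and bear no relation to the compression by $v$; what must be approximated is $v$ itself by an element of $B_1$, and nothing in your argument produces such an element.

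That is precisely what the paper does, and it is the one ingredient your proposal is missing: the lemma is used (see Proposition \ref{CuMap1}) under the standing hypothesis that every element of $A_1$ lies within $\alpha$ of $B_1$, and the proof invokes this to choose $w\in B_1$ with $\|w-v\|<\alpha$. Then $\|w^*b_nw-v^*a_nv\|\le 2\|w-v\|+\|b_n-a_n\|<4\alpha$, and taking $\eps=2\alpha-\|a-b\|$ in Lemma \ref{L:VRI1}(2) gives $\|(w^*b_nw)b-b\|\le\|b-a\|+4\alpha+(2\alpha-\|a-b\|)=6\alpha$, so Lemma \ref{Key} applies entirely inside $B$ with $v$ replaced by $w$, $\delta=6\alpha$, $\beta=18\alpha$ and cutoff $\gamma'<2/3$, yielding $\langle (b-18\alpha)_+\rangle\le\langle (b_n-\gamma')_+\rangle$ in $\Cu(B)$. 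Without access to such a $w$ (i.e.\ without the near-inclusion of $A$ in $B$ that the surrounding results supply), your ambient-algebra route cannot be completed, so as written the proposal has a genuine gap at its central step.
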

\begin{proof}
Fix $\gamma$ with $2/3>\gamma>18\alpha$ (which is possible as $\alpha<1/27$). By taking $\eps=2\alpha-\|a-b\|$ in Lemma \ref{L:VRI1} (\ref{L:VRI1:2}), there exists $n_0\in\mathbb N$ such that for $n\geq n_0$, there exists $v\in A_1$ with $$
\|(v^*a_nv)a-a\|<2\alpha-\|a-b\|.
$$
Fix such an $n\geq n_0$ and $v\in A_1$, and take $b_n\in B^+_1$ with $\|a_n-b_n\|<2\alpha$ and choose some $w\in B_1$ with $\|w-v\|<\alpha$. We have
$$
\|w^*b_nw-v^*a_nv\|\leq 2\|w-v\|+\|b_n-a_n\|<4\alpha
$$
so that
\begin{align*}
\|(w^*b_nw)b-b\|&\leq\|((w^*b_nw)-1)(b-a)\|\\
&\quad+\|(w^*b_nw-v^*a_nv)a\|\\
&\quad+\|(v^*a_nv)a-a\|\\
&\leq \|b-a\|+4\alpha+\|(v^*a_nv)a-a\|\\
&\leq 6\alpha.
\end{align*}
Taking $\delta=6\alpha$, $\beta=18\alpha$ and $2/3>\gamma'>\gamma>18\alpha$ so that $\gamma'+\delta\beta^{-1}<1$, Lemma \ref{Key} gives $$\langle (b-18\alpha)_+\rangle\leq\langle (b_n-\gamma')_+\rangle\ll\langle (b_n-\gamma)_+\rangle\ll\langle (b_n-18\alpha)_+\rangle,$$ as claimed.
\end{proof}

\begin{proposition}\label{CuMap1}
Let $A,B$ be $C^*$-algebras acting on the same Hilbert space with the property that there exists $\alpha<1/27$ such that for each $a\in A_1$ there exists  $b\in B_1$ with $\|a-b\|<\alpha$.  Then there is a well defined, order preserving map $\Phi:\Sc(\Cu(A))\rightarrow\Sc(\Cu(B))$ given by 
$$
\Phi(\sup\langle a_n\rangle)=\sup\langle (b_n-18\alpha)_+\rangle,
$$ whenever $(a_n)_{n=1}^\infty$ is a very rapidly increasing sequence in $A^+_1$ and $b_n\in B^+_1$ have $\|a_n-b_n\|<2\alpha$ for all $n\in\mathbb N$. Moreover, if $d(A,B)<\alpha$ for $\alpha<1/42$, then $\Phi$ is a bijection with inverse  $\Psi:\Sc(\Cu(B))\rightarrow \Sc(\Cu(A))$ obtained from interchanging the roles of $A$ and $B$ in the definition of $\Phi$.
\end{proposition}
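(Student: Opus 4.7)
For well-definedness of $\Phi$: given $x\in\Sc(\Cu(A))$, Lemma \ref{L:VRI4} provides a VRI sequence $(a_n)$ in $A_1^+$ with $\sup\langle a_n\rangle=x$; the near-inclusion $A\subseteq_\alpha B$ combined with the standard $2\alpha$-trick (recalled after Definition \ref{close}) furnishes $b_n\in B_1^+$ with $\|a_n-b_n\|<2\alpha$. Applying Lemma \ref{Lem:Claim} with $a=a_k$, $b=b_k$, and the VRI $(a_n)$, for each fixed $k$ and any admissible $\gamma\in(18\alpha,2/3)$, shows that the sequence $(\langle(b_n-18\alpha)_+\rangle)_n$ is rapidly increasing in $\Cu(B)$ past each index, hence has a supremum by property P3; since the $(b_n-18\alpha)_+$ are positive contractions in $B$, this supremum defines $\Phi(x)\in\Sc(\Cu(B))$. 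Independence of the choice of $(b_n)$ comes from running Lemma \ref{Lem:Claim} with $b$ replaced by an alternative $b_k'$ (and using symmetry), while independence of the VRI representative $(a_n)$ follows from Proposition \ref{L:VRI2} via intertwining and cofinality.

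Order preservation is immediate from Lemma \ref{Lem:Claim}: if $x=\sup\langle a_n\rangle\leq y=\sup\langle a_m'\rangle$ with both sequences VRI in $A_1^+$, then $\langle a_n\rangle\ll y$ for each $n$, and Lemma \ref{Lem:Claim} applied with $a=a_n$, $b=b_n$, and VRI $(a_m')$ gives $\langle(b_n-18\alpha)_+\rangle\leq\Phi(y)$; taking the supremum over $n$ yields $\Phi(x)\leq\Phi(y)$.

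For bijectivity---the main obstacle---assume $d(A,B)<\alpha<1/42$, so both one-sided hypotheses hold and $\Psi:\Sc(\Cu(B))\to\Sc(\Cu(A))$ is defined symmetrically. To check $\Psi\circ\Phi=\id$ at $x=\sup\langle a_n\rangle$, use Lemma \ref{L:VRI4} to extract from the rapidly increasing sequence $((b_n-18\alpha)_+)$ a VRI sequence $(c_k)$ in $B_1^+$ representing $\Phi(x)$, and pick $d_k\in A_1^+$ with $\|c_k-d_k\|<2\alpha$, so that $\Psi(\Phi(x))=\sup\langle(d_k-18\alpha)_+\rangle$. For $\Psi\Phi(x)\leq x$: since $\langle c_k\rangle\ll\Phi(x)\leq\sup_m\langle b_m\rangle$, there is $m_k$ with $c_k\precsim b_{m_k}$ in $\Cu(B)$; selecting a witness $w\in B$ of controlled norm (using Lemma \ref{L:VRI1}(\ref{L:VRI1:1})) with $\|w^*b_{m_k}w-c_k\|$ small, perturbing to $w'\in A$ with $\|w-w'\|<\alpha$, and invoking Lemma \ref{Key} yields $(d_k-18\alpha)_+\precsim(a_{m_k}-\gamma)_+\leq a_{m_k}$ in $\Cu(A)$, whence $\Psi\Phi(x)\leq x$. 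The reverse inequality is obtained by the symmetric chain: $\langle(b_n-18\alpha)_+\rangle\ll\Phi(x)=\sup\langle c_k\rangle$ gives $(b_n-18\alpha)_+\precsim c_{k_n}$ in $\Cu(B)$, which transfers via the same witness-approximation technique to a Cuntz comparison of the form $(a_n-\beta)_+\precsim(d_{k_n}-18\alpha)_+$ in $\Cu(A)$; Lemma \ref{L:VRI3} then absorbs the extra $\beta$-cutoff in the supremum to give $x\leq\Psi\Phi(x)$. The composition $\Phi\circ\Psi=\id$ follows analogously. The improved constant $1/42$ (versus $1/27$) arises precisely because the doubled $18\alpha$-cuts from two successive applications, together with the $O(\alpha)$ perturbation errors from approximating $w$ by $w'$, must still satisfy the admissibility condition $\gamma+\delta\beta^{-1}<1$ of Lemma \ref{Key}.
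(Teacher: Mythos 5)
Your treatment of well-definedness and order preservation is sound and close to the paper's (the paper gets independence of the VRI representative directly from the third application of Lemma \ref{Lem:Claim} rather than via Proposition \ref{L:VRI2}, but your route works), and your outline of $x\leq\Psi(\Phi(x))$ is essentially the paper's argument, provided you use Lemma \ref{L:VRI1} (\ref{L:VRI1:2}) against the VRI sequence $(c_k)$ to get the witness. The genuine gap is in your argument for $\Psi(\Phi(x))\leq x$. From $\langle c_k\rangle\ll\Phi(x)$ you do get $c_k\precsim b_{m_k}$ (incidentally, $\sup_m\langle b_m\rangle$ need not exist since $(\langle b_m\rangle)$ is not known to be directed; compare against $\sup_n\langle(b_n-18\alpha)_+\rangle$ instead), but the witness you then take from Lemma \ref{L:VRI1} (\ref{L:VRI1:1}) is \emph{not} of controlled norm: that lemma bounds $\|w^*b_{m_k}w\|$, not $\|w\|$. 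Without $w\in B_1$ you cannot perturb $w$ to $w'\in A$ with $\|w-w'\|<\alpha$ (closeness only applies to unit balls), and the error terms of the form $\|a_{m_k}-b_{m_k}\|\,\|w\|^2$ and $\|w-w'\|$ are then uncontrolled, so Lemma \ref{Key} cannot be invoked with $\beta=18\alpha$. Contractive witnesses are only available through Lemma \ref{L:VRI1} (\ref{L:VRI1:2}), which requires the ambient sequence to be very rapidly increasing --- and $(b_m)$ is not known to be VRI in $B$; establishing that kind of statement is essentially the content of the proposition itself.

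The paper's mechanism for this direction is genuinely different. It first applies Lemma \ref{Lem:Claim} with the roles of $A$ and $B$ reversed and $\gamma=1/2$, dropping from $\langle(c_k-18\alpha)_+\rangle$ to $\langle(c_n-1/2)_+\rangle$ (in the paper's notation the VRI sequence in $B$ is $(b_n')$ and its $A$-approximants are $(c_n)$). It then produces a contraction $z\in B_1$ with $\|(z^*b_{n+1}z)b_n-b_n\|\leq 6\alpha$ exactly as in the proof of Lemma \ref{Lem:Claim}, and uses the spectral projection $p$ of $b_n$ for $[18\alpha,1]$, which is a unit for $b_n'$, to get $\|z^*b_{n+1}zp-p\|\leq 1/3$; after perturbing $z$ to $y\in A_1$ this gives $\|y^*a_{n+1}yc_n-c_n\|\leq 6\alpha+1/3$. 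This error is \emph{not} small, which is precisely why the intermediate cut at $1/2$ is needed: Lemma \ref{Key} with $\beta=1/2$, $\gamma=0$ still applies because $2(6\alpha+1/3)<1$, yielding $\langle(c_n-1/2)_+\rangle\leq\langle a_{n+1}\rangle\leq x$ and hence $\langle(c_k-18\alpha)_+\rangle\leq x$. Your sketch is missing this idea (a transferable contractive witness together with a device for absorbing an order-one error), so as written the backward inequality does not go through; your closing remark on where $1/42$ comes from is roughly right, but it pertains to the other inequality, where the paper needs $\beta<1$ with $18\alpha+24\alpha\beta^{-1}<1$.
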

\begin{proof}
Suppose first that $\alpha<1/27$. To see that $\Phi$ is well defined, we apply Lemma \ref{Lem:Claim} repeatedly.  Firstly, given a very rapidly increasing sequence $(a_n)_{n=1}^\infty$ in $A_1^+$ representing an element $x\in\Sc(\Cu(A))$ and a sequence $(b_n)_{n=1}^\infty$ in $B^+_1$ with $\|a_n-b_n\|<2\alpha$ for all $n$, Lemma \ref{Lem:Claim} shows that the sequence $(\langle (b_n-18\alpha)_+\rangle)_{n=1}^\infty$ is upward directed. Indeed, for each $m$, take $a=a_m$ and $b=b_m$ in Lemma \ref{Lem:Claim} so that $\langle (b_m-18\alpha)_+\rangle\ll\langle (b_n-18\alpha)_+\rangle$ for all sufficiently large $n$. As such $\sup_n\langle(b_n-18\alpha)_+\rangle$ exists in $\Sc(\Cu(B))$. 

Secondly, this supremum does not depend on the choice of $(b_n)_{n=1}^\infty$. Consider two sequences $(b_n)_{n=1}^\infty$ and $(b_n')_{n=1}^\infty$ satisfying $\|b_n-a_n\|<2\alpha$ and $\|b_n'-a_n\|<2\alpha$ for all $n$. For each $n$, Lemma \ref{Lem:Claim} shows that there exists $m_0$ such that for $m\geq m_0$, we have $$
\langle (b_n-18\alpha)_+\rangle\ll\langle (b_m'-18\alpha)_+\rangle,\quad\text{and}\quad \langle (b_n'-18\alpha)_+\rangle\ll\langle (b_m-18\alpha)_+\rangle.
$$
Thus $\sup_n\langle(b_n-18\alpha)_+\rangle=\sup_n\langle (b_n'-18\alpha)_+\rangle$.  

Thirdly, given two very rapidly increasing sequences $(a_n')_{n=1}^\infty$ and $(a_n)_{n=1}^\infty$ in $A^+_1$ with $\sup_n\langle a_n'\rangle\leq\sup_n\langle a_n\rangle$, and sequences $(b_n')_{n=1}^\infty$ and $(b_n)_{n=1}^\infty$ in $B^+_1$ with $\|b_n'-a_n'\|,\|b_n-a_n\|<2\alpha$ for all $n$, Lemma \ref{Lem:Claim} gives $\sup_n\langle (b_n'-18\alpha)_+\rangle\leq\sup_n\langle (b_n-18\alpha)_+\rangle$. In particular, when $(a_n')_{n=1}^\infty$ and $(a_n)_{n=1}^\infty$ represent the same element of $\Sc(\Cu(A))$, this shows that the map $\Phi$ given in the proposition is well defined.  In general, this third observation also shows that $\Phi$ is order preserving.

Now suppose that $d(A,B)<\alpha<1/42$ and let $\Psi:\Sc(\Cu(B))\rightarrow\Sc(\Cu(A))$ be the order preserving map obtained by interchanging the roles of $A$ and $B$ above.   Take $x\in\Sc(\Cu(A))$ and fix a very rapidly increasing sequence $(a_n)_{n=1}^\infty$ in $A^+_1$ representing $x$. Fix a sequence $(b_n)_{n=1}^\infty$ in $B^+_1$ with $\|a_n-b_n\|<2\alpha$ for all $n$.  For each $n$, Lemma \ref{Lem:Claim} gives $m>n$ with 
$$
\langle(b_n-18\alpha)_+\rangle\ll\langle (b_m-\gamma)_+\rangle\ll\langle (b_m-18\alpha)_+\rangle,
$$
for any $\gamma$ with $18\alpha<\gamma<2/3$. Passing to a subsequence if necessary, we can assume this holds for $m=n+1$ and hence $((b_n-18\alpha)_+)_{n=1}^\infty$ is a rapidly increasing sequence.  By Lemma \ref{L:VRI4}, there exists a sequence $(m_n)_{n=1}^\infty$ in $\mathbb N$ so that, defining $b_n'=g_{2^{-(m_n+1)},2^{-m_n}}((b_n-18\alpha)_+)$, we have a very rapidly increasing sequence $(b_n')_{n=1}^\infty$ in $B^+_1$ with
$$
\sup_n\langle b_n'\rangle=\sup_n\langle (b_n-18\alpha)_+\rangle=\Phi(x).
$$
Choose a sequence $(c_n)_{n=1}^\infty$ in $A^+_1$ with $\|c_n-b_n'\|<2\alpha$ for each $n$ so that the definition of $\Psi$ gives $\Psi(\Phi(x))=\sup\langle (c_n-18\alpha)_+\rangle$.  We now show that $x\leq\Psi(\Phi(x))\leq x$.

Fix $0<\beta<1$ with $\alpha(18+24\beta^{-1})<1$. This choice can be made as $\alpha<1/42$. Fix $n\in\mathbb N$. As $\langle (b_n-18\alpha)_+\rangle \ll\sup_r\langle b_r'\rangle$, Lemma \ref{L:VRI1} (\ref{L:VRI1:2}) provides $m_0\in\mathbb N$ such that for $m\geq m_0$, there exists $w\in B_1$ with 
\begin{equation}\label{P3.8:1}
\|(w^*b_m'w)(b_n-18\alpha)_+-(b_n-18\alpha)_+\|<2\alpha-\|a_n-b_n\|.
\end{equation}
Take $v\in A_1$ with $\|v-w\|<\alpha$.  Then
\begin{equation}\label{P3.8:2}
\|(v^*c_mv-w^*b_m'w)\|\leq \|c_m-b_m'\|+2\|v-w\|<4\alpha.
\end{equation}
Combining the estimates (\ref{P3.8:1}), (\ref{P3.8:2}) and noting that $\|w^*b_m'w-1\|\leq 1$ as $w$ is a contraction, gives
\begin{align*}
\|(v^*c_mv)a_n-a_n\|&\leq \|((v^*c_mv)-1)(a_n-b_n)\|\\
&\quad+\|(v^*c_mv-w^*b_m'w)b_n\|\\
&\quad+\|(w^*b_m'w-1)(b_n-(b_n-18\alpha)_+)\|\\
&\quad+\|(w^*b_m'w)(b_n-18\alpha)_+-(b_n-18\alpha)_+\|\\
&<\|a_n-b_n\|+4\alpha+18\alpha+(2\alpha-\|a_n-b_n\|)=24\alpha.
\end{align*}
Taking $\gamma=18\alpha$, $\delta=24\alpha$, Lemma \ref{Key} gives
$$
\langle (a_n-\beta)_+\rangle\leq\langle(c_m-18\alpha)_+\rangle\leq\Psi(\Phi(x)).
$$
As $n$ was arbitrary, $\sup_n\langle (a_n-\beta)_+\rangle\leq\Psi(\Phi(x))$.   As $\beta<1$, Lemma \ref{L:VRI3} gives $\sup_n\langle (a_n-\beta)_+\rangle=\sup_n\langle a_n\rangle=x$ so that $x\leq\Psi(\Phi(x))$.

For the reverse inequality, fix $k\in\mathbb N$ and apply Lemma \ref{Lem:Claim} (with the roles of $A$ and $B$ reversed, $b_k'$ playing the role of $a$, $(b_n')_{n=1}^\infty$ the role of $(a_n)$) and $\gamma=1/2$, so $18\alpha<\gamma<2/3$) to find some $n\in\mathbb N$ such that $\langle (c_k-18\alpha)_+\rangle\leq\langle (c_n-1/2)_+\rangle$.  Now, just as in the proof of Lemma \ref{Lem:Claim}, there is $z\in B_1$ with $\|(z^*b_{n+1}z)b_n-b_n\|\leq6\alpha$. Let $p\in B^{**}$ be the spectral projection of $b_n$ for $[18\alpha,1]$, so that, just as in the proof of Lemma \ref{Key}, $\|z^*b_{n+1}zp-p\|\leq 1/3$.  Fix $y\in A_1$ with $\|y-z\|\leq\alpha$. Since $p$ is a unit for $(b_n-18\alpha)_+$, it is a unit for $b_n'=g_{2^{-(m_n+1)},2^{-m_n}}((b_n-18\alpha)_+)$, giving the estimate
\begin{align*}
\|y^*a_{n+1}yc_n-c_n\|&\leq\|y^*a_{n+1}yc_n-z^*b_{n+1}zc_n\|\\
&\quad+\|(z^*b_{n+1}z-1)(c_n-b_n')\|\\
&\quad+\|(z^*b_{n+1}z)b_n'-b_n'\|\\
&\leq4\alpha+2\alpha+1/3=6\alpha+1/3.
\end{align*}
Take $\delta=6\alpha+1/3$, $\beta=1/2$ and $\gamma=0$, so that $\gamma+\beta^{-1}\delta=2/3+12\alpha<1$. Thus Lemma \ref{Key}, gives
$$
\langle(c_n-1/2)_+\rangle\leq\langle a_{n+1}\rangle,
$$
and hence
$$
\langle (c_k-18\alpha)_+\rangle\leq\langle a_{n+1}\rangle\leq x.
$$
Taking the supremum over $k$ gives $\Psi(\Phi(x))\leq x$.
\end{proof}

\begin{theorem}\label{CuMap2}
Let $A$ and $B$ be $C^*$-algebras acting on the same Hilbert space with $d_{\cb}(A,B)<\alpha<1/42$. Then $(\Cu(A),\Sc(\Cu(A)))$ is isomorphic to $(\Cu(B),\Sc(\Cu(B)))$.  Moreover, an order preserving isomorphism $\Phi:\Cu(A)\rightarrow \Cu(B)$ can be defined by $\Phi(\sup\langle a_n\rangle)=\sup\langle (b_n-18\alpha)_+\rangle$, whenever $(a_n)_{n=1}^\infty$ is a very rapidly increasing sequence in $(A\otimes\mathcal K)^+_1$ and $(b_n)_{n=1}^\infty$ is a sequence in $(B\otimes\mathcal K)_1^+$ with $\|a_n-b_n\|<2\alpha$ for all $n\in\mathbb N$.

\end{theorem}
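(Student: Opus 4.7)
The hypothesis $d_\cb(A,B)<\alpha<1/42$ unpacks to $d(A\otimes\K,B\otimes\K)<\alpha$ inside the common ambient $\Bb(\Hs\otimes\ell^2(\mathbb N))$. My plan is to apply Proposition \ref{CuMap1} to the pair $A\otimes\K,B\otimes\K$ and then upgrade the resulting bijection to a full $\mathbf{Cu}$-isomorphism $\Phi\colon\Cu(A)\to\Cu(B)$. Since $A\otimes\K\cong(A\otimes\K)\otimes\K$ is stable, every $x\in\Cu(A\otimes\K)=\Cu(A)$ admits a representative in $(A\otimes\K)^+$ (after rescaling to norm at most one) and is therefore trivially the supremum of an increasing sequence from $(A\otimes\K)^+$. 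Thus $\Sc(\Cu(A\otimes\K))=\Cu(A)$, and likewise for $B$, so Proposition \ref{CuMap1} produces the order-preserving bijection $\Phi\colon\Cu(A)\to\Cu(B)$ given by the stated formula, with order-preserving inverse $\Psi$.

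Three further properties remain. For scale preservation, given $x\in\Sc(\Cu(A))$, invoke Lemma \ref{L:VRI4} to obtain a very rapidly increasing sequence $(a_n)$ in $A_1^+$ representing $x$; any approximants $b_n\in B_1^+$ with $\|a_n-b_n\|<2\alpha$ (available via the preliminary two-fold estimate from $d(A,B)<\alpha$) then exhibit $\Phi(x)$ as an element of $\Sc(\Cu(B))$, and the reverse inclusion is symmetric. For preservation of suprema of countable upward directed sets and of the relation $\ll$, note that since $\Phi$ and $\Psi$ are mutually inverse order-preserving maps, $\Phi$ is an order-isomorphism; a standard two-line argument then upgrades this to preservation of countable upward directed suprema, from which $\Phi(x)\ll\Phi(y)$ whenever $x\ll y$ follows routinely.

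The substantive step is additivity. The plan is to use common isometries inside the stabilisations to arrange approximants orthogonally. Fix isometries $S_1,S_2\in 1\otimes\Bb(\ell^2(\mathbb N))$ with $S_1S_1^*+S_2S_2^*=1$; since $\K$ is a two-sided ideal in $\Bb(\ell^2(\mathbb N))$, conjugation by $S_i$ maps $A\otimes\K$ and $B\otimes\K$ into themselves. Given $x,y\in\Cu(A)$ with very rapidly increasing representatives $(a_n),(a_n')$ in $(A\otimes\K)_1^+$, the sequence $c_n:=S_1a_nS_1^*+S_2a_n'S_2^*$ lies in $(A\otimes\K)_1^+$ (orthogonality yields $\|c_n\|=\max(\|a_n\|,\|a_n'\|)$), is itself very rapidly increasing, and represents $x+y$; here witnesses $v_1,v_2$ for the very-rapidly-increasing condition on the two components combine to the contraction $S_1v_1S_1^*+S_2v_2S_2^*$ by orthogonality. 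Choose $b_n,b_n'\in(B\otimes\K)_1^+$ with $\|a_n-b_n\|,\|a_n'-b_n'\|<2\alpha$ and set $d_n:=S_1b_nS_1^*+S_2b_n'S_2^*$; orthogonality gives $\|c_n-d_n\|<2\alpha$, so
$$
\Phi(x+y)=\sup_n\langle(d_n-18\alpha)_+\rangle.
$$
Orthogonal functional calculus yields $(d_n-18\alpha)_+=S_1(b_n-18\alpha)_+S_1^*+S_2(b_n'-18\alpha)_+S_2^*$, an orthogonal sum whose Cuntz class is $\langle(b_n-18\alpha)_+\rangle+\langle(b_n'-18\alpha)_+\rangle$. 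Invoking property \textbf{P5} from Section \ref{cu} then gives $\Phi(x+y)=\Phi(x)+\Phi(y)$.

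The main obstacle is additivity: Proposition \ref{CuMap1} only supplies an order-theoretic bijection, and the monoid structure must be recovered separately. The critical device is the common pair $S_1,S_2\in 1\otimes\Bb(\ell^2(\mathbb N))$, which renders the approximants on the $A$ and $B$ sides simultaneously orthogonal and lets the $18\alpha$-cut down commute with the sum via orthogonal functional calculus.
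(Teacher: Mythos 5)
Your proposal is correct and follows essentially the same route as the paper: apply Proposition \ref{CuMap1} to the stabilisations (noting $\Sc(\Cu(A\otimes\K))=\Cu(A)$), deduce preservation of $\ll$, suprema and the scale from the order-isomorphism, and obtain additivity by passing to orthogonal sums of representatives and using that the $18\alpha$-cut-down respects orthogonal sums. The only (cosmetic) difference is that you implement the orthogonal sum via the common multiplier isometries $S_1,S_2\in 1\otimes\Bb(\ell^2(\mathbb N))$, whereas the paper uses $a_n\oplus a_n'$ under the identification $M_2(A\otimes\K)\cong A\otimes\K$; your version makes the simultaneous compatibility of this identification for $A$ and $B$ explicit.
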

\begin{proof}
We have $d(A\otimes \mathcal K,B\otimes\mathcal K)<\alpha<1/42$.  By definition $\Sc(\Cu(A\otimes\K))=\Cu(A)$ and $\Sc(\Cu(A\otimes\K))=\Cu(B)$. By applying Proposition \ref{CuMap1} to $A\otimes\mathcal K$ and $B\otimes \mathcal K$, we obtain mutually inverse order preserving bijections $\Phi:\Cu(A\otimes \mathcal K)\rightarrow\Cu(B\otimes \mathcal K)$ and $\Psi:\Cu(B\otimes \mathcal K)\rightarrow\Cu(A\otimes \mathcal K)$, given by $\Phi(\sup\langle a_n\rangle)=\sup\langle (b_n-18\alpha)_+\rangle$, whenever $(a_n)_{n=1}^\infty$ is a very rapidly increasing sequence in $(A\otimes\mathcal K)^+_1$ and $(b_n)_{n=1}^\infty$ is a sequence in $(B\otimes\mathcal K)_1^+$ with $\|a_n-b_n\|<2\alpha$ for all $n\in\mathbb N$.  Given a very rapidly increasing sequence $(a_n)_{n=1}^\infty$ in $A^+_1$ representing an element $x\in\Sc(\Cu(A))$, we can find a sequence $(b_n)_{n=1}^\infty$ in $B^+_1$ with $\|a_n-b_n\|<2\alpha$, so that $\Phi(x)=\sup\langle (b_n-18\alpha)_+\rangle\in\Sc(\Cu(B))$.  Since $\Phi$ and $\Phi^{-1}$ are order preserving bijections, they also preserve the relation $\ll$ of compact containment and suprema of countable upward directed sets, as these notions are determined by the order relation $\leq$.  Further, taking $a_n=b_n=0$ for all $n$, shows that $\Phi(0_{\Cu(A)})=0_{\Cu(B)}$. Finally, note that $\Phi$ preserves addition: given very rapidly increasing sequences $(a_n)_{n=1}^\infty$ and $(a_n')_{n=1}^\infty$ in $(A\otimes\mathcal K)_1^+$ representing $x$ and $y$ in $\Cu(A)$, the sequence $(a_n\oplus a_n')$ is very rapidly increasing in $M_2(A\otimes\mathcal K)\cong A\otimes\mathcal K$. If $(b_n)_{n=1}^\infty,(b_n')_{n=1}^\infty$ have $\|a_n-b_n\|,\|a_n'-b_n'\|<2\alpha$ for all $n$, then 
$$
\|(a_n\oplus a_n')-(b_n\oplus b_n')\|<2\alpha,
$$
and has
$$
((b_n\oplus b_n')-18\alpha)_+=(b_n-18\alpha)_+\oplus(b_n'-18\alpha)_+.
$$ In this way we see that $\Phi(x+y)=\Phi(x)+\Phi(y)$.
\end{proof}

In particular properties of a $C^*$-algebra which are determined by its Cuntz semigroup transfer to completely close $C^*$-algebras. One of the most notable of these properties is that of strict comparison.
\begin{corollary}
Let $A$ and $B$ be $C^*$-algebras acting on the same Hilbert space with $d_\cb(A,B)<1/42$ and suppose that $A$ has strict comparison. Then so too does $B$.
\end{corollary}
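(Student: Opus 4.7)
The plan is to reduce strict comparison to a purely order-theoretic property of the Cuntz semigroup and then invoke the isomorphism $\Phi\colon\Cu(A)\to\Cu(B)$ produced by Theorem~\ref{CuMap2}. All the real work has already been carried out; what remains is an easy transfer argument.

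The key input is R\o rdam's characterisation: a $C^*$-algebra $C$ has strict comparison (of positive elements, measured by lower semicontinuous $2$-quasitraces) if and only if $\Cu(C)$ is \emph{almost unperforated}, meaning that whenever $x,y\in\Cu(C)$ satisfy $(n+1)x\leq ny$ for some $n\in\mathbb{N}$, one has $x\leq y$. This condition refers only to the ordered monoid structure of $\Cu(C)$, with no mention of quasitraces, ideals, or the algebra itself.

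Given this, the proof is essentially formal. Theorem~\ref{CuMap2} provides a bijection $\Phi\colon\Cu(A)\to\Cu(B)$ which preserves both addition and order, and whose inverse $\Psi$ has the same properties. If $x,y\in\Cu(B)$ satisfy $(n+1)x\leq ny$, then applying $\Psi$ yields $(n+1)\Psi(x)\leq n\Psi(y)$ in $\Cu(A)$. Almost unperforation of $\Cu(A)$ gives $\Psi(x)\leq\Psi(y)$, and applying $\Phi$ recovers $x\leq y$. Hence $\Cu(B)$ is almost unperforated, and $B$ has strict comparison.

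There is no substantive obstacle here; the difficulty has already been absorbed in the construction of $\Phi$ in Theorem~\ref{CuMap2}. The only point requiring care is to state strict comparison in the form (almost unperforation of $\Cu$) that is manifestly intrinsic to the Cuntz semigroup, which is exactly the content of R\o rdam's theorem.
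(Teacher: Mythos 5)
Your proposal is correct and is essentially the paper's own (implicit) argument: the paper states this corollary without proof, as an immediate consequence of Theorem~\ref{CuMap2}, on the grounds that strict comparison is a property determined by the ordered Cuntz semigroup. Your version simply makes precise which order-theoretic property is being transported (almost unperforation, via R\o rdam's characterisation), and the transfer through $\Phi$ and $\Psi$, which preserve order and addition in both directions, goes through exactly as you describe.
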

\section{$\Z$-stability and automatic complete closeness}\label{Sect4}

Given a $C^*$-algebra $A\subset\Bb(\Hs)$, \cite{CCSSWW:arXiv} shows that the metrics $d(A,\cdot)$ and $d_\cb(A,\cdot)$ are equivalent if and only if $A$ has a positive answer to Kadison's similarity problem from \cite{K:AJM}.  The most useful reformulation of the similarity property for working with close $C^*$-algebras is due to Christensen and Kirchberg. Combining \cite{K:JOT} and \cite[Theorem 3.1]{C:Scand}, it follows that a $C^*$-algebra $A$ has a positive answer to the similarity problem if and only if $A$ has Christensen's property $D_k$ from \cite{C:Acta} for some $k$.
 
\begin{definition}\label{D:DK}
Given an operator $T\in\Bb(\Hs)$, we write $\ad(T)$ for the derivation $\ad(T)(x)=xT-Tx$.  A $C^*$-algebra $A$ has \emph{property $D_k$} for some $k>0$ if, for every non-degenerate representation $\pi:A\rightarrow\Bb(\Hs)$, the inequality
\begin{equation}\label{DefDK}
d(T,\pi(A)')\leq k\|\ad(T)|_{\pi(A)}\|
\end{equation}
holds for all $T\in\Bb(\Hs)$.  A von Neumann algebra $A$ is said to have the property $D_k^*$ if the inequality (\ref{DefDK}) holds for all unital normal representations $\pi$ on $\Hs$ and all $T\in\Bb(\Hs)$.  
\end{definition}

By taking weak$^*$-limit points, it follows that if $A$ is a weak$^*$-dense $C^*$-subalgebra of a von Neumann algebra $\M$ and $A$ has property $D_k$, then $\M$ has property $D_k^*$. 

That property $D_k$ converts near containments to completely bounded near containments originates in \cite[Theorem 3.1]{C:Acta}. The version we give below improves on the bounds $\gamma'=6k\gamma$ from \cite{C:Acta} and $\gamma'=(1+\gamma)^{2k}-1$ from \cite[Corollary 2.12]{CSSW:GAFA}. 

\begin{proposition}\label{DKCB}
Suppose that $A$ has property $D_k$ for some $k>0$. Then for $\gamma>0$, every near inclusion $A\subseteq_{\gamma}B$ (or $A\subset_\gamma B)$ with $A$ and $B$ acting non-degenerately on the same Hilbert space, gives rise to a completely bounded near inclusion $A\subseteq_{\cb,\gamma'}B$ (or $A\subset_{\cb,\gamma'}$), where $\gamma'=2k\gamma$.
\end{proposition}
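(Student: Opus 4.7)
The proof would use the Kirchberg--Christensen equivalent form of property $D_k$: for every non-degenerate representation $\pi\colon A \to \Bb(\Ks)$ and every $T \in \Bb(\Ks)$, $d(T, \pi(A)') \le k\|\ad(T)|_{\pi(A)}\|$. The target bound $\gamma' = 2k\gamma$ suggests that a single direct application of this inequality at the amplified level should suffice, the factor of $2$ coming from the elementary estimate $\|[T, x - y]\| \le 2\|T\|\|x - y\|$ and the factor of $k$ from one use of $D_k$. This is in contrast to the earlier bound $6k\gamma$ of \cite{C:Acta} and the bound $(1+\gamma)^{2k} - 1$ of \cite{CSSW:GAFA}, both of which arise from iterated estimates.

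Fix $n \ge 1$; it suffices to show $M_n(A) \subseteq_{2k\gamma} M_n(B)$ inside $\Bb(\Hs^n)$. Consider the amplified representation $\pi^{(n)}\colon A \to \Bb(\Hs^n)$, $a \mapsto a \otimes 1_n$, which is non-degenerate since $A$ is. The first step is a commutator estimate on the dual side. For any $T \in (B \otimes 1_n)'$ of norm at most $1$ and any $a \in A_1$, choose $b \in B$ with $\|a - b\| \le \gamma$; since $T$ commutes with $b \otimes 1_n \in B \otimes 1_n$, we have $[T, a \otimes 1_n] = [T, (a-b) \otimes 1_n]$, so $\|[T, a\otimes 1_n]\| \le 2\gamma$. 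Hence $\|\ad(T)|_{\pi^{(n)}(A)}\| \le 2\gamma$, and property $D_k$ applied to $\pi^{(n)}$ gives $d(T, \pi^{(n)}(A)') \le 2k\gamma$. In other words, $(B \otimes 1_n)' \subseteq_{2k\gamma} (A \otimes 1_n)'$ in $\Bb(\Hs^n)$.

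The second (and most delicate) step is to convert this commutant near inclusion into the desired algebraic statement $M_n(A) \subseteq_{2k\gamma} M_n(B)$. Given $X \in M_n(A)_1$, the plan is to use a Hahn--Banach duality argument: $d(X, M_n(B))$ is realized as the supremum of $|\omega(X)|$ over bounded linear functionals $\omega$ on $\Bb(\Hs^n)$ of norm at most $1$ annihilating $M_n(B)$. Analyzing such $\omega$ through their normal/singular decomposition reduces matters to estimates against operators in the amplified commutant $(B \otimes 1_n)'$, at which point the inclusion obtained in the previous paragraph gives the desired bound $|\omega(X)| \le 2k\gamma$.

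The main obstacle is precisely this last passage from commutants back to the algebras: a naive entrywise approximation of $X \in M_n(A)$ by an element of $M_n(B)$ only yields the bad bound $n\gamma$, so the whole point of the argument is to avoid any dimension dependence, and the single-application-of-$D_k$ structure must be preserved through the duality argument so as to yield the sharp constant $2k\gamma$. The version for $\subset_\gamma$ (rather than $\subseteq_\gamma$) then follows by choosing $\gamma'' < \gamma$ with $A \subseteq_{\gamma''} B$ and applying the above to $\gamma''$.
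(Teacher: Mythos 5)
Your first step is fine, and it is indeed the only point where $D_k$ and the factor $2$ enter: for $T$ in the unit ball of $(B\otimes 1_n)'$ one gets $\|\ad(T)|_{A\otimes 1_n}\|\le 2\gamma$, hence $d(T,(A\otimes 1_n)')\le 2k\gamma$, i.e.\ $(B\otimes 1_n)'\subseteq_{2k\gamma}(A\otimes 1_n)'$. The genuine gap is your second step: you give no mechanism for converting this commutant near inclusion into $M_n(A)\subseteq_{2k\gamma}M_n(B)$, and none of the tools you name does it. Hahn--Banach only says $d(X,M_n(B))=\sup|\omega(X)|$ over norm-one functionals $\omega$ annihilating $M_n(B)$; such functionals are not implemented or controlled by operators in $(B\otimes 1_n)'$, so the near inclusion of commutants cannot be ``fed into'' $\omega$ directly. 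The only known bridge from commutator estimates to distance estimates is another $D_k$-type inequality --- here it would have to be one for $B'$, or an expectation onto $M_n(B)$ --- which you do not have. Even granting normality of $\omega$, a bicommutant step would reduce you to needing $A''\,\overline{\otimes}\,M_n\subseteq_{2k\gamma}B''\,\overline{\otimes}\,M_n$, i.e.\ the von Neumann-level complete near inclusion, and this does \emph{not} follow from $(B\otimes 1_n)'\subseteq_{2k\gamma}(A\otimes 1_n)'$ by duality alone: flipping a near inclusion of commutants back to the algebras with controlled constants requires extra structure (compare Lemma \ref{DKTech}, which achieves such a flip only for finite algebras in standard form via the modular conjugation). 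Moreover, on your fixed Hilbert space a functional annihilating $B\otimes M_n$ need not annihilate the weak closure, and its singular part is precisely what the normal/singular decomposition cannot dispose of.

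The paper's proof resolves both difficulties by changing representation rather than arguing on the given space: it passes to the universal representation $\pi$ of $C=C^*(A,B)$, notes that $\pi(A)''$ has property $D_k^*$, and quotes \cite[Proposition 2.2.4]{CCSSWW:Preprint} for the von Neumann-level statement $\pi(A)''\subseteq_{\cb,2k\gamma}\pi(B)''$ --- that citation is exactly where the hard passage from commutant estimates back to the algebras is carried out. The descent to $A\otimes M_n\subseteq_{2k\gamma}B\otimes M_n$ is then the Hahn--Banach argument of \cite{C:Acta}, which works precisely because in the universal representation every bounded functional on $C\otimes M_n$ is normal on $\pi(C)''\,\overline{\otimes}\,M_n$, so annihilation of $B\otimes M_n$ passes to $\pi(B)''\,\overline{\otimes}\,M_n$. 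In short, your sketch reproduces the easy half of the standard argument (one application of $D_k$ giving the constant $2k\gamma$ on the commutant side) and defers the hard half --- the constant-preserving return from commutants to the algebras --- which is the actual content of the result being proved.
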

\begin{proof}
Suppose $A\subseteq_\gamma B$ is a near inclusion of $C^*$-algebras acting non-degenerately on $\Hs$ and fix $n\in\mathbb N$. Let $C=C^*(A,B)$ and let $\pi:C\rightarrow\Bb(\Ks)$ be the universal representation of $C$. Then $\pi(A)''$ has property $D_k^*$ so that $\pi(A)''\subseteq_{\cb,2k\gamma}\pi(B)''$ by \cite[Proposition 2.2.4]{CCSSWW:Preprint}.  By definition, for $n\in\mathbb N$ we have $\pi(A)''\otimes M_n\subseteq_{2k\gamma}\pi(B)''\otimes M_n$. As $\pi$ is the universal representation of $C$ the Hahn-Banach argument used to deduce \cite[equation (3)]{C:Acta} from \cite[equation (2)]{C:Acta} gives $A\otimes M_n\subseteq_{2k\gamma}B\otimes M_n$, as required.  The result when we work with strict near inclusions $A\subset_\gamma B$ follows immediately.
\end{proof}

$C^*$-algebras with no bounded traces (such as stable algebras) where shown to have the similarity property in \cite{H:Ann}. Using the property $D_k$ version of this fact, the previous proposition gives automatic complete closeness when one algebra has no bounded traces. The argument below which transfers the absence of bounded traces to a nearby $C^*$-algebra essentially goes back to \cite[Lemma 9]{KK:AJM}. We use more recent results in order to get better estimates.
\begin{corollary}\label{NoTraces}
Suppose that $A$ and $B$ are $C^*$-algebras which act non-degenerately on the same Hilbert space and satisfy $d(A,B)<\gamma$ for $\gamma<(2+6\sqrt{2})^{-1}$. Suppose that $A$ has no bounded traces (for example if $A$ is stable).  Then $B$ has no bounded traces, and therefore $A\subset_{\cb,3\gamma}B$, $B\subset_{\cb,3\gamma}A$ and $d_{\cb}(A,B)<6\gamma$.
\end{corollary}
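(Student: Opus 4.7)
The plan is to proceed in three steps: first, transfer the absence of bounded traces from $A$ to $B$; second, invoke property $D_k$ via Haagerup's similarity theorem; and finally, combine the resulting one-sided complete near inclusions into a two-sided bound.

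First, I would show that $B$ has no bounded traces, following the contradiction strategy of \cite[Lemma 9]{KK:AJM} but with improved constants from \cite{H:Ann}. The key quantitative input is Haagerup's scaling witness: when $A$ has no bounded traces, for each positive contraction $a \in A$ and each $\eps > 0$ there exist elements $x_1, \ldots, x_n \in A$ with $\|\sum_i x_i^* x_i\|$ bounded by an absolute constant and $\|a - \sum_i x_i a x_i^*\| < \eps$, ruling out any non-zero bounded trace on $A$. Given a hypothetical bounded trace $\tau$ on $B$ with $\tau(b) \neq 0$ for some positive contraction $b \in B$, I would pick a positive contraction $a \in A$ with $\|a - b\| < 2\gamma$, a scaling tuple $(x_i) \subset A_1$ for $a$, and a corresponding tuple $(y_i) \subset B_1$ with $\|x_i - y_i\| < \gamma$, then propagate these perturbations through the scaling inequality. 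The relevant estimates control $\|\sum_i y_i^* y_i - \sum_i x_i^* x_i\|$ by a Cauchy--Schwarz bound responsible for the $\sqrt{2}$ that appears in the threshold. The constant $(2 + 6\sqrt{2})^{-1}$ is precisely the value below which the perturbed scaling inequality in $B$ still forces $\tau(b) = 0$, yielding the required contradiction.

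Second, with both $A$ and $B$ now free of bounded traces, Haagerup's theorem \cite{H:Ann} supplies property $D_k$ for both algebras with $k = 3/2$, the optimal quantitative form of the similarity property in the trace-free setting. Proposition \ref{DKCB} applied in both directions then converts the near inclusions $A \subset_\gamma B$ and $B \subset_\gamma A$ arising from $d(A, B) < \gamma$ into the complete versions $A \subset_{\cb, 3\gamma} B$ and $B \subset_{\cb, 3\gamma} A$.

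Third, to conclude $d_{\cb}(A, B) < 6\gamma$, I would fix $\gamma_0 < \gamma$ with $d(A, B) < \gamma_0$ and apply the previous steps at the level $\gamma_0$. The resulting complete near inclusions $A \subseteq_{\cb, 3\gamma_0} B$ and $B \subseteq_{\cb, 3\gamma_0} A$ imply that at each matrix level $n$, every element of the unit ball of $M_n(A)$ has a $3\gamma_0$-close element in $M_n(B)$ of norm at most $1 + 3\gamma_0$; rescaling to the unit ball of $M_n(B)$ introduces at most a further $3\gamma_0$ of error, giving $d(M_n(A), M_n(B)) \leq 6\gamma_0 < 6\gamma$ for every $n$, and hence $d_{\cb}(A, B) < 6\gamma$.

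The principal difficulty is Step 1: the quantitative perturbation of Haagerup's scaling witnesses under the Kadison--Kastler closeness is delicate, and the threshold $(2 + 6\sqrt{2})^{-1}$ encodes the exact balance between the unit-ball-to-algebra conversion factor of $2$, the perturbation error $\gamma$, and the norm control via Cauchy--Schwarz that produces the $6\sqrt{2}$ term. Once Step 1 is in hand, Steps 2 and 3 are essentially formal consequences of Proposition \ref{DKCB} and the geometric definition of $d_{\cb}$.
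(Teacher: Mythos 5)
Your Steps 2 and 3 are essentially sound and close to the paper: once both algebras are trace-free one gets property $D_{3/2}$, Proposition \ref{DKCB} gives $A\subset_{\cb,3\gamma}B$ and $B\subset_{\cb,3\gamma}A$, and the factor-of-two conversion of two one-sided near inclusions into the metric at each matrix level yields $d_{\cb}(A,B)<6\gamma$. One attribution matters, though: the constant $k=3/2$ is not taken from \cite{H:Ann} in the paper; it comes from Christensen's theorem (Theorem 2.4 of \cite{C:IJM}) that properly infinite von Neumann algebras have $D_{3/2}^*$, applied to the weak$^*$ closures of $A$ and $B$ in nondegenerate representations, which are properly infinite precisely because the algebras admit no bounded traces. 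Without a concrete value of $k$ the similarity property alone does not produce the stated numerical bounds $3\gamma$ and $6\gamma$.

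The genuine gap is Step 1. The ``Haagerup scaling witness'' you invoke --- for every positive contraction $a\in A$ and $\eps>0$ a tuple $x_1,\dots,x_n\in A$ with $\|a-\sum_i x_ia x_i^*\|<\eps$ and $\|\sum_i x_i^*x_i\|$ bounded by an \emph{absolute} constant --- is not a theorem you can cite: \cite{H:Ann} concerns the similarity problem, and the standard trace-vanishing characterisations (Cuntz--Pedersen type approximation by sums of self-commutators) come with no uniform control on $n$ or on the norms of the witnesses. That control is exactly what your perturbation scheme needs, since replacing each $x_i$ by a nearby $y_i\in B$ incurs an error growing with $n$ and with $\|\sum_i x_i^*x_i\|$; without it no fixed threshold such as $(2+6\sqrt{2})^{-1}$ can emerge, and in any case you never carry out the estimate --- asserting that the threshold ``is precisely the value below which the perturbed inequality forces $\tau(b)=0$'' is reverse-engineering the constant, not deriving it. The paper's argument is of a different nature and explains where the constant actually comes from: given a bounded trace $\tau$ on $B$, take its GNS representation, extend to a representation $\tilde{\pi}$ of $C^*(A,B)$ in which the support projection $p$ of the GNS summand is central in $\tilde{\pi}(B)''$; by \cite[Lemma 5]{KK:AJM} and \cite[Lemma 1.10(ii)]{Kh:JOT} there is a projection $q\in\tilde{\pi}(A)''$ with $\|p-q\|\leq\gamma/\sqrt{2}$ (the source of the $\sqrt{2}$). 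If $q$ is infinite, the argument of \cite[Lemma 6.1]{CSSW:GAFA} (the source of the $6$ and of the restriction on $\gamma$) shows $p$ is infinite in $\tilde{\pi}(B)''$, contradicting finiteness of the trace-GNS von Neumann algebra; if $q$ is finite, a finite trace on $q\tilde{\pi}(A)''q$ composed with $\tilde{\pi}|_A$ gives a bounded trace on $A$, contradicting the hypothesis. Some argument of this kind (or a correct substitute with explicit estimates) is needed before your Steps 2 and 3 can be run.
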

\begin{proof}
Suppose $d(A,B)<(2+6\sqrt{2})^{-1}$ and $\tau:B\rightarrow\mathbb C$ is a bounded trace.  Let $\pi:B\rightarrow\Bb(\Hs)$ be the GNS-representation of $B$ corresponding to $\tau$. Then there is a larger Hilbert space $\tilde{\Hs}$ and a representation $\tilde{\pi}:C^*(A,B)\rightarrow\Bb(\tilde{\Hs})$ such that $\pi$ is a direct summand of $\tilde{\pi}|_B$. That is, the projection $p$ from $\tilde{\Hs}$ onto $\Hs$ is central in $\tilde{\pi}(B)$ and $\pi(b)=p\tilde{\pi}(b)p$ for all $b\in B$. Then, by \cite[Lemma 5]{KK:AJM}, we have $d(\tilde{\pi}(A)'',\tilde{\pi}(B)'')\leq d(A,B)$, and hence there is a projection $q\in \tilde{\pi}(A)''$ with $\|p-q\|\leq\gamma/\sqrt{2}$ by \cite[Lemma 1.10(ii)]{Kh:JOT}.  If $q$ is an infinite projection in $\tilde{\pi}(A)''$, then as $d(A,B)<(2+6\sqrt{2})^{-1}$, one can follow the argument of \cite[Lemma 6.1]{CSSW:GAFA} (using the estimate $\|p-q\|<\gamma/\sqrt{2}$ in place of $\|p-q\|<2\gamma$) to see that $p$ is infinite in $\tilde{\pi}(B)''$, giving a contradiction. If $q$ is finite, then $q\tilde{\pi}(A)''q$ has a finite trace $\rho$ and $\rho\circ\tilde{\pi}|_A$ defines a bounded trace on $A$, and again we have a contradiction. Thus $B$ has no bounded traces.

Theorem 2.4 of \cite{C:IJM} shows that a properly infinite von Neumann algebra has property $D_{3/2}^*$. As such, every $C^*$-algebra with no bounded traces has property $D_{3/2}$.   Since $A$ and $B$ both have property $D_{3/2}$, Proposition \ref{DKCB} gives $A\subset_{\cb,3\gamma}B$ and $B\subset_{\cb,3\gamma}A$, whence $d_{\cb}(A,B)<6\gamma$.
\end{proof}

\begin{corollary}\label{Cor:Stable}
\label{cor:stable}
Suppose that $A$ and $B$ are $C^*$-algebras which act non-degenerately on the same Hilbert space and satisfy $d(A,B)<1/252$ and suppose that $A$ has no bounded traces (for example when $A$ is stable).  Then $$(\Cu(A),\Sc(\Cu(A))\cong(\Cu(B),\Sc(\Cu(B)).$$
\end{corollary}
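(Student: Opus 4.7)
The plan is to chain together the two main technical results of the preceding two sections: Corollary \ref{NoTraces}, which promotes closeness to complete closeness in the traceless setting, and Theorem \ref{CuMap2}, which builds the Cuntz semigroup isomorphism from complete closeness. All the hard analytic work has already been done, so the proof reduces to a constant check.

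First I would verify the numerical thresholds line up. The hypothesis $d(A,B) < 1/252$ easily satisfies $d(A,B) < (2 + 6\sqrt{2})^{-1}$, since $(2+6\sqrt{2})^{-1} \approx 0.095$, so Corollary \ref{NoTraces} applies. It delivers two conclusions: that $B$ also has no bounded traces, and that $d_{\cb}(A,B) < 6\gamma$ whenever $d(A,B) < \gamma$. Taking $\gamma = 1/252$ gives $d_{\cb}(A,B) < 6/252 = 1/42$, which is exactly the threshold required by Theorem \ref{CuMap2}.

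Next I would invoke Theorem \ref{CuMap2} directly to obtain an order-preserving isomorphism $\Phi \colon \Cu(A) \to \Cu(B)$ which is a morphism in the category $\mathbf{Cu}$ (preserving $0$, addition, the relation $\ll$, and suprema of increasing sequences), and which carries $\Sc(\Cu(A))$ onto $\Sc(\Cu(B))$. This is the content of that theorem and requires no further argument once the complete closeness bound has been established.

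There is no serious obstacle here, as the corollary is a packaging of two earlier results. The only thing to double-check is that the factor of $6$ coming from the one-sided property $D_{3/2}$ estimates in Corollary \ref{NoTraces} together with the $1/42$ tolerance in Theorem \ref{CuMap2} is precisely what forces the constant $1/252 = 1/(6 \cdot 42)$ in the hypothesis; so the constant is essentially optimal given the machinery used. The proof itself should be one short paragraph: apply Corollary \ref{NoTraces}, apply Theorem \ref{CuMap2}, done.
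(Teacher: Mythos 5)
Your proposal is correct and is essentially identical to the paper's own proof, which simply combines Corollary \ref{NoTraces} with Theorem \ref{CuMap2} after noting that $6\,d(A,B)<1/42$. The constant check you perform (and the observation that $1/252<(2+6\sqrt{2})^{-1}$) is exactly the content of that one-line argument.
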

\begin{proof}
Combine Corollary \ref{NoTraces} with Theorem \ref{CuMap2} (noting that $6d(A,B)<1/42$).
\end{proof}

We can use the Cuntz semigroup to show that stability transfers to close $C^*$-algebras provided one algebra has stable rank one. To detect stability for a $\sigma$-unital $C^*$-algebra we use the following criterion from  \cite[Lemma 5.4]{OPR:IMRN} which reformulates the earlier characterisation from \cite{HR:JFA}.

\begin{lemma}
 \label{lem:stable} Let $A$ be a $\sigma$-unital $C^*$-algebra and let $c\in A$ be a strictly positive element. Then, $A$ is stable if and only if for every $\epsilon>0$, there is $b\in A^+$ such that $(c-\epsilon)_+\perp b$ and $(c-\epsilon)_+\precsim b$.
\end{lemma}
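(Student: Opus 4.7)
This result is a sharper form of the Hjelmborg--R\o rdam characterization of stability from~\cite{HR:JFA}: a $\sigma$-unital $C^*$-algebra $A$ is stable if and only if for every $a\in A^+$ and $\delta>0$ there exists a contraction $b\in A^+$ with $\|ab\|<\delta$ and $a\precsim b$. My plan is to derive the lemma from this criterion in both directions, using the strict positivity of $c$ to reduce general positive elements to cut-downs of $c$.

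The direction `hypothesis $\Rightarrow$ stability' is the easier one: I would verify HR. Given $a\in A^+_1$ and $\delta>0$, strict positivity of $c$ makes $(g_{\eps/2,\eps}(c))_{\eps>0}$ an approximate unit (where $g$ is as in~(\ref{G})); choose $\eps>0$ so that $a':=g_{\eps/2,\eps}(c)\,a\,g_{\eps/2,\eps}(c)$ satisfies $\|a-a'\|<\delta/2$. Since $g_{\eps/2,\eps}$ vanishes on $[0,\eps/2]$, the element $a'$ lies in the hereditary subalgebra $\overline{(c-\eps/2)_+A(c-\eps/2)_+}$, and in particular $a'\precsim(c-\eps/2)_+$. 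Now apply the hypothesis at parameter $\eps/2$ to produce a contraction $b\in A^+$ with $b\perp(c-\eps/2)_+$ and $(c-\eps/2)_+\precsim b$. Orthogonality propagates through the hereditary subalgebra, so $a'\perp b$ and $a'\precsim(c-\eps/2)_+\precsim b$. Thus $\|ab\|\le\|a-a'\|<\delta/2$, and R\o rdam's lemma delivers $(a-\delta/2)_+\precsim a'\precsim b$, confirming HR after a standard relabelling of parameters.

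The converse `stability $\Rightarrow$ hypothesis' requires more care. Fix $\eps>0$ and apply HR to $(c-\eps/2)_+$ with tolerance $\delta$ (to be chosen) to obtain a contraction $b'\in A^+$ with $\|(c-\eps/2)_+b'\|<\delta$ and $(c-\eps/2)_+\precsim b'$. Set $h:=(1-g_{\eps/2,\eps}(c))^{1/2}\in M(A)$ and define $b:=hb'h\in A^+$. A direct functional-calculus check gives $h(c-\eps)_+=0$: as functions of $c$, the supports of $h$ and $(c-\eps)_+$ lie in $[0,\eps)$ and $[\eps,\|c\|]$ respectively, which are disjoint. Hence $b\perp(c-\eps)_+$. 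For the subequivalence $(c-\eps)_+\precsim b$, the estimate $g_{\eps/2,\eps}(c)\le(2/\eps)(c-\eps/2)_+$ yields $\|g_{\eps/2,\eps}(c)\,b'\|<2\delta/\eps$, and writing $1-(1-g)^{1/2}=g\,k(g)$ for a continuous $k$ with $\|k\|_\infty\le 1$ gives $\|b-b'\|=O(\delta/\eps)$. Combining this with the slack in $(c-\eps/2)_+\precsim b'$---specifically, the general principle that $a\precsim b'$ provides $(a-\mu)_+\precsim(b'-\mu')_+$ for some $\mu'>0$---and R\o rdam's lemma produces $(c-\eps)_+\precsim b$ provided $\delta$ is small enough. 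The main obstacle is that this $\mu'$ depends on $b'$ and hence implicitly on $\delta$; resolving this requires either an iterative choice of $\delta$ or a sharper form of HR providing uniform control on $\mu'$.
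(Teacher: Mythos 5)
There is nothing to compare against inside the paper: Lemma \ref{lem:stable} is quoted from \cite[Lemma 5.4]{OPR:IMRN} (reformulating \cite{HR:JFA}) and is not proved here, so your proposal has to stand on its own as a derivation from the Hjelmborg--R{\o}rdam criterion, and as written it does not. In the direction deducing stability from the hypothesis, the construction itself is sound: $a'=g_{\epsilon/2,\epsilon}(c)\,a\,g_{\epsilon/2,\epsilon}(c)$ does lie in $\overline{(c-\epsilon/2)_+A(c-\epsilon/2)_+}$, hence is orthogonal to $b$ and satisfies $a'\precsim (c-\epsilon/2)_+\precsim b$. But what you end up verifying is $\|ab\|$ small together with $(a-\delta/2)_+\precsim b$, whereas the criterion you quoted demands $a\precsim b$ for the same $b$; no ``relabelling of parameters'' converts the cut-down statement into the exact one for a fixed $b$. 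You must either quote the form of the Hjelmborg--R{\o}rdam characterisation that is itself stated with the cut-down $(a-\epsilon)_+\precsim b$ (which is available and is what \cite{OPR:IMRN} work with), or add an argument. Two smaller points: $\|ab\|\le\|a-a'\|$ needs $b$ to be a contraction (rescale it), and $0\le x\le y$ does not give $\|xb'\|\le\|yb'\|$, so $\|g_{\epsilon/2,\epsilon}(c)b'\|$ is only $O(\sqrt{\delta/\epsilon})$ rather than $O(\delta/\epsilon)$; both are harmless.

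The genuine gap is the converse direction, and you flag it yourself: the $\mu'>0$ with $(c-\epsilon)_+\precsim (b'-\mu')_+$ depends on the particular $b'$ returned by the approximate criterion (on the rate of the convergence $v_nb'v_n^*\to(c-\epsilon/2)_+$), while the tolerance $\delta$ controlling $\|hb'h-b'\|$ must be fixed before $b'$ exists. As written the argument is circular, and the suggested ``iterative choice of $\delta$'' is not supplied and does not obviously terminate, since each new $b'$ resets $\mu'$. The clean repair is to drop the approximate criterion in this direction altogether: $(c-\epsilon)_+$ has the local unit $g_{\epsilon/2,\epsilon}(c)$, so it lies in the set $F(A)$ of positive elements admitting a local unit, and the exact form of the Hjelmborg--R{\o}rdam theorem says that in a stable $\sigma$-unital algebra every element of $F(A)$ admits an orthogonal positive element to which it is equivalent; applied to $(c-\epsilon)_+$ this produces the required $b$ at once, with no perturbation of $b'$ and no $\delta$--$\mu'$ interplay. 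With that substitution (and the corrected form of the criterion in the other direction) the proof closes; as it stands, the converse half is incomplete.
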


Following \cite{RS:JFA}, we say that a $C^*$-algebra $A$ has \emph{weak cancellation} provided $\Cu(A)$ satisfies the property that $x+z\ll y+z$ implies $x\leq y$. It was proved in \cite[Theorem 4.3]{RW:Crelle} that if $A$ has stable rank one, then $W(A)$ has the property defining weak cancelation. When $A$ has stable rank one, so too does $A\otimes\K$ \cite[Theorem 3.6]{R:PLMS}, and so $A$ has weak cancelation.

\begin{lemma}\label{lem:stable2} 
Let $A$ be a $\sigma$-unital $C^*$-algebra with weak cancelation.  Then $A$ is stable if and only if $\Cu(A)=\Sc(\Cu(A))$.
\end{lemma}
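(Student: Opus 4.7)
The plan is to handle the two implications of the biconditional separately.

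For the forward direction, if $A$ is stable I fix a $*$-isomorphism $\varphi\colon A\otimes\K\to A$. Any $x\in\Cu(A)=\Cu(A\otimes\K)$, represented by some $a\in(A\otimes\K)^+$, is equally represented by $\varphi(a)\in A^+$; the standard rapidly increasing sequence $(g_{2^{-(n+1)},2^{-n}}(\varphi(a)))_{n=1}^\infty$ of positive contractions from $A^+$ then has supremum $x$, witnessing $x\in\Sc(\Cu(A))$.

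For the converse, I first observe that the hypothesis forces $\langle c\rangle$ to be the maximum of $\Cu(A)$: any $a\in A^+$ satisfies $a\precsim c$ (since $c$ is strictly positive), so $\langle a\rangle\leq\langle c\rangle$, and hence every element of $\Sc(\Cu(A))$, being a supremum of such classes, lies at most $\langle c\rangle$. The assumption $\Cu(A)=\Sc(\Cu(A))$ therefore yields $2\langle c\rangle\leq\langle c\rangle$, and consequently $\langle c\rangle=2\langle c\rangle$ in $\Cu(A)$.

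The plan is then to verify the Hjelmborg--R{\o}rdam criterion of Lemma \ref{lem:stable}: given $\epsilon>0$, I must produce $b\in A^+$ with $b\perp(c-\epsilon)_+$ and $(c-\epsilon)_+\precsim b$. From $\langle(c-\epsilon/2)_+\rangle\ll\langle c\rangle$ and property {\bf P5}, $2\langle(c-\epsilon/2)_+\rangle\ll 2\langle c\rangle=\langle c\rangle=\sup_n\langle(c-1/n)_+\rangle$, so compact containment supplies $\delta\in(0,\epsilon/2)$ with $2\langle(c-\epsilon/2)_+\rangle\leq\langle(c-\delta)_+\rangle$, i.e.\ $(c-\epsilon/2)_+\oplus(c-\epsilon/2)_+\precsim(c-\delta)_+$ in $A\otimes\K$. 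A R{\o}rdam-style realisation argument (using the gap $\epsilon/2$) then produces $v\in A\otimes\K$ with $v(c-\delta)_+v^*=(c-\epsilon)_+\oplus(c-\epsilon)_+$. Extracting the first-column matrix entries yields $v_1,v_2\in A$ with $v_i(c-\delta)_+v_i^*=(c-\epsilon)_+$ and $v_1(c-\delta)_+v_2^*=0$; setting $b_i=(c-\delta)_+^{1/2}v_i^*v_i(c-\delta)_+^{1/2}\in A^+$ produces mutually orthogonal elements $b_1,b_2$ each Cuntz equivalent to $(c-\epsilon)_+$.

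The main obstacle is the remaining bridge: we have orthogonal $b_1,b_2\in A^+$ each with $b_i\sim(c-\epsilon)_+$, whereas Lemma \ref{lem:stable} demands an element orthogonal to the original $(c-\epsilon)_+$ and not merely to a Cuntz-equivalent copy. This is exactly where the weak cancellation hypothesis is used; I expect to exploit cancellation in $\Cu(A)$ (in conjunction with the hereditary-subalgebra identifications arising from $b_1\sim(c-\epsilon)_+$) to transfer the orthogonality and exhibit a suitable $b$ orthogonal to $(c-\epsilon)_+$ itself, completing the verification of stability.
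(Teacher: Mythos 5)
Your argument for the converse has a genuine gap, and it sits exactly at the point you flag yourself. Everything up to the construction of $b_1\perp b_2$ with $b_i\sim(c-\epsilon)_+$ inside $\overline{(c-\delta)_+A(c-\delta)_+}$ is fine (it is a standard application of R{\o}rdam's lemma), but it does not use weak cancellation at all, and the step that would need it --- producing an element orthogonal to the \emph{original} $(c-\epsilon)_+$ which Cuntz-dominates it --- is left as an expectation rather than a proof. This is the heart of the lemma: Cuntz equivalence does not transfer orthogonality, and having two orthogonal copies of $\langle(c-\epsilon)_+\rangle$ located somewhere in the hereditary subalgebra of $(c-\delta)_+$ gives no control on their position relative to $(c-\epsilon)_+$ itself, so ``exploiting cancellation together with hereditary-subalgebra identifications'' is not an argument as it stands. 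The paper's proof avoids your detour entirely by choosing the candidate complement as an explicit function-of-$c$ element from the outset: write $(c-\delta)_+=(c-\delta)_+g_{\epsilon/2,\epsilon}(c)+(c-\delta)_+(1-g_{\epsilon/2,\epsilon}(c))$ and set $b=(c-\delta)_+(1-g_{\epsilon/2,\epsilon}(c))$, which is orthogonal to $(c-\epsilon)_+$ for free since both are products of functions of $c$ with disjoint supports. Then $\langle(c-\delta)_+g_{\epsilon/2,\epsilon}(c)\rangle\leq\langle(c-\tfrac{\epsilon}{2})_+\rangle\ll\langle(c-\tfrac{\epsilon}{4})_+\rangle$, so from $2\langle(c-\tfrac{\epsilon}{4})_+\rangle\ll\langle(c-\delta)_+\rangle\leq\langle(c-\tfrac{\epsilon}{4})_+\rangle+\langle b\rangle$ weak cancellation gives $\langle(c-\tfrac{\epsilon}{4})_+\rangle\leq\langle b\rangle$, hence $(c-\epsilon)_+\precsim b$, and Lemma \ref{lem:stable} applies. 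If you want to rescue your route, you would in effect have to reproduce this decomposition anyway, so the intermediate orthogonal pair $b_1,b_2$ buys you nothing.

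Two smaller points. In the forward direction, your claim that $a\in(A\otimes\K)^+$ and $\varphi(a)\in A^+$ ``equally represent'' the same class is unjustified for an arbitrary isomorphism $\varphi\colon A\otimes\K\to A$: composing with an automorphism of $A$ acting nontrivially on $K_0$ shows the induced map on $\Cu$ need not be the identity. What is true, and what the paper exploits, is that the relevant identifications can be implemented by unitaries (or isometries) in the multiplier algebra of $A\otimes\K$, which do preserve Cuntz classes; with that repair the easy direction goes through. Finally, note that in your reduction you only ever use $\Sc(\Cu(A))\subseteq\{x\leq\langle c\rangle\}$ to get $2\langle c\rangle\leq\langle c\rangle$, which matches the paper, so that part is fine.
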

\begin{proof}
If $A$ is stable, then $\Sc(\Cu(A))=\Cu(A)$.  Indeed, given $n\in\mathbb N$, choose an automorphism $\theta_n:\K\otimes\K\rightarrow \K\otimes\K$ with $\theta_n(\K\otimes e_{11})=K\otimes M_n$ and let $\psi:A\rightarrow A\otimes\K$ be an isomorphism.   Then $(\psi^{-1}\otimes \id_\K)(\id_A\otimes \theta_n)(\psi\otimes \id_\K)$ is an automorphism of $A\otimes\K$ which maps $A\otimes e_{11}$ onto $A\otimes M_n$. In this way the class of a positive element in $A\otimes M_n$ lies in the scale $\Sc(\Cu(A))$.  For $x\in (A\otimes\K)_+$ and $\eps>0$, we have $(x-\eps)_+\in \bigcup_{n=1}^\infty(A\otimes M_n)$, and hence $\langle(x-\eps)_+\rangle\in \Sc(\Cu(A))$. Since the scale is defined to be closed under suprema, it follows that $\Cu(A)=\Sc(\Cu(A))$.

Conversely, let $c\in A$ be a strictly positive element so that $\Sc(\Cu(A))=\{x\in\Cu(A):x\leq\langle c\rangle\}$ and let $\epsilon>0$ be given. The hypothesis ensures that $2\langle c\rangle\leq \langle c\rangle$, and so we can find $\delta>0$ such that $2\langle (c-\tfrac{\epsilon}{4})_+\rangle\ll \langle (c-\delta)_+\rangle$. Now write $$
(c-\delta)_+= (c-\delta)_+g_{\epsilon/2,\epsilon}(c)+(c-\delta)_+(1_{M(A)}-g_{\epsilon/2,\epsilon}(c)),
$$
and observe that
$$
\langle (c-\delta)_+g_{\epsilon/2,\epsilon}(c)\rangle\leq\langle g_{\epsilon/2,\epsilon}(c)\rangle=\langle (c-\tfrac{\epsilon}{2})_+\rangle\ll \langle (c-\tfrac{\epsilon}{4})_+\rangle.
$$
We now have that 
$$
2\langle (c-\tfrac{\epsilon}{4})_+\rangle\ll \langle (c-\delta)_+\rangle\leq \langle (c-\tfrac{\epsilon}{2})_+\rangle+\langle (c-\delta)_+(1_{M(A)}-g_{\epsilon/2,\epsilon}(c))\rangle
$$
and so weak cancellation enables us to conclude that $$\langle (c-\tfrac{\epsilon}{4})_+\rangle\leq \langle (c-\delta)_+(1-g_{\epsilon/2,\epsilon}(c))\rangle.$$
Let $b=(c-\delta)_+(1-g_{\epsilon/2,\epsilon}(c))$. It is clear that $b\perp (c-\epsilon)_+$ and that $(c-\epsilon)_+\leq (c-\tfrac{\epsilon}{4})_+\precsim b$. Thus we may invoke Lemma \ref{lem:stable} to conclude that $A$ is stable.
\end{proof}

\begin{theorem}
Let $A$ and $B$ be $\sigma$-unital $C^*$-algebras with $A$ stable and $d(A,B)<1/252$ and suppose either $A$ or $B$ has stable rank one. Then $B$ is stable.
\end{theorem}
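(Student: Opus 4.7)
The plan is to combine Corollary~\ref{Cor:Stable} with Lemma~\ref{lem:stable2}. First, since $A$ and $B$ are $C^*$-subalgebras of a common ambient $C^*$-algebra (implicit in the hypothesis $d(A,B)<1/252$), I represent this ambient algebra faithfully and non-degenerately on a Hilbert space $\Hs$, so that $A$ and $B$ act non-degenerately on a common $\Hs$ with $d(A,B)<1/252$. Because $A$ is stable it has no bounded traces, so Corollary~\ref{Cor:Stable} yields an order isomorphism
\[
\Phi \colon (\Cu(A),\Sc(\Cu(A))) \longrightarrow (\Cu(B),\Sc(\Cu(B))).
\]

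Next I invoke the easy direction of Lemma~\ref{lem:stable2}: its proof shows that stability of a $\sigma$-unital $C^*$-algebra forces $\Cu = \Sc(\Cu)$, and this direction does not use weak cancellation. Hence $\Cu(A)=\Sc(\Cu(A))$, and transporting across $\Phi$ gives $\Cu(B)=\Sc(\Cu(B))$.

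To deduce that $B$ is stable from the converse direction of Lemma~\ref{lem:stable2}, it remains to verify that $B$ has weak cancellation. I handle the two cases of the hypothesis in a uniform way. If $B$ has stable rank one, this is immediate from the remarks preceding Lemma~\ref{lem:stable2}. If instead $A$ has stable rank one, then $\Cu(A)$ has weak cancellation; the key observation is that weak cancellation is a purely order-theoretic property of a $\mathbf{Cu}$-semigroup, phrased entirely using $+$, $\leq$ and $\ll$. Since $\Phi$ is an isomorphism in the category $\mathbf{Cu}$ (so preserves addition, order and compact containment), weak cancellation passes through $\Phi$ to $\Cu(B)$. In either case Lemma~\ref{lem:stable2} applied to $B$ finishes the proof.

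There is no serious obstacle: once one notices that weak cancellation is invariant under $\mathbf{Cu}$-isomorphism, the theorem is a short assembly of results already in place. The only point requiring care is that stable rank one of $A$ (as opposed to $B$) is enough, because we only need weak cancellation on $\Cu(B)$ rather than stable rank one of $B$ itself, and this weaker property is transported by $\Phi$.
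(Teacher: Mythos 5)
Your proposal is correct and follows essentially the same route as the paper: apply Corollary~\ref{Cor:Stable} (using that a stable algebra has no bounded traces), note $\Cu(A)=\Sc(\Cu(A))$ by stability, transport this and weak cancellation (an order-theoretic property of the Cuntz semigroup, available from whichever algebra has stable rank one) across the isomorphism, and conclude via Lemma~\ref{lem:stable2}. The only difference is that you spell out the non-degenerate representation and the observation that the easy direction of Lemma~\ref{lem:stable2} needs no weak cancellation, which the paper leaves implicit.
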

\begin{proof}
By Corollary \ref{Cor:Stable}, we have an isomorphism $$(\Cu(A),\Sigma(\Cu(A))\cong (\Cu(B),\Sigma(\Cu(B)).$$ Since $A$ is stable $\Cu(A)=\Sc(\Cu(A))$. Our isomorphism condition now tells us that $\Cu(B)=\Sigma(\Cu(B))$. If $B$ has stable rank one, then it has weak cancelation, whereas if $A$ has stable rank one, $A$ has weak cancelation and, as weak cancelation is a property of the Cuntz semigroup, so too does $B$.  The result now follows from Lemma \ref{lem:stable2}.
\end{proof}

We now turn to the situation in which one $C^*$-algebra is $\Z$-stable. In \cite{C:IJM}, Christensen shows that McDuff II$_1$ factors have property $D_{5/2}$, and hence via the estimates of \cite{P:StP}, have similarity length at most $5$. (In fact McDuff factors, and more generally II$_1$ factors with Murray and von Neumann's property $\Gamma$ have length $3$ \cite{C:JFA}, but at present we do not know how to use this fact to obtain better estimates for automatic complete closeness of close factors with property $\Gamma$.)

 In \cite{Pop:BAus,LS:arXiv,JW:BLMS} analogous results have been established in a $C^*$-setting: in particular $\Z$-stable $C^*$-algebras (\cite{JW:BLMS}) and $C^*$-algebras of the form $A\otimes B$, where $B$ is nuclear and has arbitrarily large unital matrix subalgebras (\cite{Pop:BAus}) have similarity degree (and hence length) at most $5$.  Here we show how to use the original von Neumann techniques from \cite{C:IJM} to show that a class of algebras generalising both these examples have property $D_{5/2}$ (recapturing the upper bound $5$ on the length).  A similar result has been obtained independently by Hadwin and Li \cite[Corollary 1]{HL:arXiv} working in terms of the similarity degree as opposed to property $D_k$. Once we have this $D_k$ estimate, Proposition \ref{DKCB} applies. In particular we obtain uniform estimates on the cb-distance $d_{\cb}(A,B)$ in terms of $d(A,B)$ when $A$ is $\Z$-stable.

Given a von Neumann algebra $\M\subset\Bb(\Hs)$ and $x\in \Bb(\Hs)$, write $\co_\M(x)$ for the weak$^*$ closed convex hull of $\{uxu^*:u\in\mathcal U(\M)\}$. If $\M$ is injective, then by Schwartz's property P, $\co_\M(x)\cap \M'$ is non-empty for all $x\in\Bb(\Hs)$. Note that for a non-degenerately represented $C^*$-algebra $A\subset\Bb(\Hs)$, we have $\|\ad(T)|_A\|=\|\ad(T)|_{A''}\|$.  We say that an inclusion $A\subset C$ of $C^*$-algebras is non-degenerate if the inclusion map is non-degenerate.

\begin{proposition}\label{DKProp}
Let $C$ be a $C^*$-algebra and $A,B\subset C$ be commuting non-degenerate $C^*$-subalgebras which generate $C$. Suppose $B$ is nuclear and has no non-zero finite dimensional representations.  Then $C$ has property $D_{5/2}$, and hence similarity length at most $5$.
\end{proposition}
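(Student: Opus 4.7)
Fix a non-degenerate representation $\pi : C \to \Bb(\Hs)$ and $T \in \Bb(\Hs)$; write $k_0 = \|\ad(T)|_{\pi(C)}\|$, $\N = \pi(A)''$, and $\mathcal{R} = \pi(B)''$. The target inequality is $d(T, \pi(C)') \le (5/2) k_0$. Two structural facts drive the argument: $\mathcal{R}$ is injective (since $B$ is nuclear) and has no type $I_n$ summand for any finite $n$ (otherwise $B$ would admit a nonzero $n$-dimensional representation), so in particular $\mathcal{R}$ has no abelian summand and contains a unital copy of $M_2(\mathbb{C})$; and $\N \subset \mathcal{R}'$, so $\pi(C)' = \N' \cap \mathcal{R}'$. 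The first move is to average $T$ into $\mathcal{R}'$. Schwartz's property P yields $T_1 \in \co_{\mathcal{R}}(T) \cap \mathcal{R}'$, and the estimate $\|uTu^* - T\| = \|\ad(T)(u)\| \le k_0$ for $u \in \mathcal U(\mathcal{R})$ combines with convexity and weak-$*$ lower semicontinuity to give $\|T - T_1\| \le k_0$. Crucially, because $\pi(A) \subset \mathcal{R}'$, every such $u$ commutes pointwise with $\pi(A)$, so $\ad(uTu^*)(a) = u \ad(T)(a) u^*$ has norm $\|\ad(T)(a)\|$ for each $a \in \pi(A)$. Hence
\[
\|\ad(T_1)|_{\pi(A)}\| \le \|\ad(T)|_{\pi(A)}\| \le k_0,
\]
i.e. this first averaging costs at most $k_0$ in norm and preserves the commutator bound against $\pi(A)$.

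Next, a matricial trick. Choose $2 \times 2$ matrix units $\{e_{ij}\}$ in $\mathcal{R}$ with $e_{11} + e_{22} = 1_\Hs$, and set $p = e_{11}$, $v = e_{21}$. The unitary $p\Hs \oplus p\Hs \to \Hs$, $(\xi, \eta) \mapsto \xi + v\eta$, identifies $\Hs$ with $p\Hs \otimes \mathbb{C}^2$ and carries $M_2 \subset \mathcal{R}$ to $1 \otimes M_2$. Setting $\mathcal{R}_0 := \mathcal{R} \cap \{e_{ij}\}'$ (again injective, since it is isomorphic to the corner $p\mathcal{R}p$) we obtain $\mathcal{R} = \mathcal{R}_0 \otimes M_2$ and $\mathcal{R}' = \mathcal{R}_0' \otimes \mathbb{C}\, 1_2$, where $\mathcal{R}_0' \subset \Bb(p\Hs)$ is itself injective (as the commutant of an injective algebra). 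Since $T_1, \pi(A) \subset \mathcal{R}'$, there exist $S \in \mathcal{R}_0'$ and a $*$-homomorphism $\alpha : \pi(A) \to \mathcal{R}_0'$ with $T_1 = S \otimes 1_2$ and $a = \alpha(a) \otimes 1_2$ for all $a \in \pi(A)$. Norms and commutator norms transfer isometrically: $\|\alpha(a)\| = \|a\|$ and $\|[S, \alpha(a)]\| = \|[T_1, a]\| \le k_0$.

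The remaining task is to produce $S_2 \in \mathcal{R}_0' \cap \alpha(\pi(A))'$ with $\|S - S_2\| \le (3/2) k_0$; then $T_2 := S_2 \otimes 1_2$ lies in $\mathcal{R}' \cap \N' = \pi(C)'$ and a triangle inequality gives $\|T - T_2\| \le k_0 + (3/2) k_0 = (5/2) k_0$. This step closely follows Christensen's argument for McDuff $II_1$ factors in \cite{C:IJM}: one iterates the averaging technique used above, now inside the injective algebra $\mathcal{R}_0'$, exploiting the diagonal $M_2$-structure from the previous paragraph to realise the $D^*_{3/2}$-type estimate available for properly infinite injective von Neumann algebras (compare Corollary \ref{NoTraces}) in this relative setting. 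Once $D_{5/2}$ is established, Pisier's estimates from \cite{P:StP} immediately give the similarity length bound of $5$. The technical core of the argument is precisely this second averaging: securing the sharp constant $3/2$ (rather than a larger multiple of $k_0$) so that the cumulative multiplier is exactly $5/2$, and this is where the original von Neumann techniques of \cite{C:IJM} must be carefully transplanted to the present $C^*$-setting in which $\mathcal{R}$ arises from a nuclear $B$ with no nonzero finite-dimensional representations, rather than as a tensor factor of a McDuff $II_1$ factor.
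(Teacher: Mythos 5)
Your first averaging step is fine (property P over the injective $\pi(B)''$ moves $T$ into $\pi(B)'$ at cost $k_0$ without increasing the commutator norm against $\pi(A)$), and the $M_2$ rewriting $\mathcal R=\mathcal R_0\otimes M_2$, $\mathcal R'=\mathcal R_0'\otimes\mathbb C 1_2$ is correct. But the step you defer -- producing $S_2\in\mathcal R_0'\cap\alpha(\pi(A))'$ with $\|S-S_2\|\le\tfrac32 k_0$ -- is the entire content of the proposition, and the mechanism you cite does not apply. Christensen's $D^*_{3/2}$ estimate bounds the distance from an operator to the commutant of a \emph{properly infinite} von Neumann algebra; here the algebra you must commute with is $\alpha(\pi(A))$, on which there are no hypotheses at all (in the intended applications $A$ is completely general), so $\alpha(\pi(A))''$ need be neither properly infinite nor injective. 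Injectivity of the ambient $\mathcal R_0'$ does not substitute: averaging over unitaries of $\mathcal R_0'$ or of injective subalgebras of it produces commutation with those algebras, not with $\alpha(\pi(A))$, and averaging over the unitary group of $\alpha(\pi(A))''$ would require that algebra to be injective, i.e.\ a nuclearity-type hypothesis on $A$ which is absent. Indeed, your sketch uses nothing about the pair $(\alpha(\pi(A)),\mathcal R_0')$ beyond ``arbitrary $C^*$-subalgebra of an injective von Neumann algebra,'' and a general estimate $d(S,\mathcal N\cap D')\le\tfrac32\|\ad(S)|_D\|$ in that generality would, taking $\mathcal N=\Bb(\Hs)$, give property $D_{3/2}$ for every $C^*$-algebra, i.e.\ a positive solution of the similarity problem with a universal constant. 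It is also symptomatic that the unital copy of $M_2$ you extract does no work: the hypothesis that $B$ has no nonzero finite-dimensional representations is needed for a quite different purpose.

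What the paper actually does is decompose $C''$ (not $B''$): since $B$ has no nonzero finite-dimensional representations, $C''$ has no finite type I part, so there is a central projection $p$ with $C''p$ of type II$_1$ and $C''(1-p)$ properly infinite. One averages $x$ over the injective algebra $(\M_0\cup pB)''$, where $\M_0$ is a unital type I$_\infty$ subalgebra of $C''(1-p)$, at cost $k_0$ and keeping $\|\ad(\cdot)|_{C''}\|\le k_0$. On the properly infinite corner, the properly infinite $\M_0$ now commutes with $y_2$, so Christensen's exact formula $\|\ad(y_2)|_{C''(1-p)}\|=2d(y_2,C'(1-p))$ yields an approximant at cost $k_0/2$. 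On the finite corner one first uses the II$_1$ structure of $C''p$ (Christensen's Proposition 2.8 argument) to get $z_1\in A'p$ at cost $k_0/2$, and then averages $z_1$ over the injective $B''p$; this second averaging stays inside $A'p$ because $A$ and $B$ commute, and costs at most $\|\ad(z_1)|_{B''p}\|\le2\|z_1-y_1\|\le k_0$ because $y_1$ already commutes with $B''p$. The budget $1+\max(\tfrac12,\tfrac32)=\tfrac52$ gives $D_{5/2}$. Your reduced claim (an element of $\mathcal R'\cap\pi(A)'$ within $\tfrac32 k_0$ of $T_1$) is in fact true in this setting, but its proof is exactly this two-part argument on the type decomposition of $C''$, with different tools and different constants on the two parts; none of that structure survives your reduction to the single injective algebra $\mathcal R_0'$, so the missing step cannot be filled by ``transplanting'' the McDuff argument as stated.
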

\begin{proof}
Suppose $C$ is non-degenerately represented on $\Hs$ and fix $x\in\Bb(\Hs)$. The non-degeneracy assumption ensures that $A$ and $B$ are non-degenerately represented on $\Hs$. Note that $C''$ has no finite type I part as $B$ has no non-zero finite dimensional representations. Let $p$ be the central projection in $C''$ so that $C''p$ is type II$_1$ and $C''(1-p)$ is properly infinite. Fix a unital type I$_\infty$ subalgebra $\M_0\subset (1-p)C''(1-p)$ and let $\M=(\M_0\cup pB)''$ which is injective. By Schwartz's property P, there exists $y\in \co_{\M}(x)\cap (\M\cup\{p\})'$. As in \cite[Theorems 2.3, 2.4]{C:IJM}, $\|y-x\|\leq\|\ad(x)|_{C''}\|$ and $\|\ad(y)|_{C''}\|\leq\|\ad(x)|_{C''}\|$.  Write $y_1=yp$ and $y_2=y(1-p)$. If $p\neq 1$, then the properly infinite algebra $\M_0$ lies in $C''(1-p)\cap\{y_2,y_2^*\}'$ and so by \cite[Corollary 2.2]{C:IJM},  $\|\ad(y_2)|_{C''(1-p)}\|=2d(y_2,C'(1-p))$.  Take $x_2\in C''(1-p)$ with $\|x_2-y_2\|=\|\ad(y_2)|_{C''(1-p)}\|/2\leq \|\ad(x)|_{C''}\|/2$.

If $p\neq 0$, then we argue exactly as in the proof of \cite[Proposition 2.8]{C:IJM} to produce first $z_1\in A'p$ with $\|y_1-z_1\|\leq\|\ad(y_1)|_{C''p}\|/2\leq\|\ad(x)|_{C''}\|/2$.  Continuing with the proof of \cite[Proposition 2.8]{C:IJM}, as $B''p$ and $A''p$ commute, $\co_{B''p}(z_1)$ is contained in $A'p$ and hence there exists $x_1\in\co_{B''p}(z_1)\cap B'p$ with $$
\|x_1-z_1\|\leq\|\ad(z_1)|_{B''p}\|\leq \|\ad(z_1-y_1)|_{B''p}\|\leq 2\|z_1-y_1\|\leq\|\ad(x)|_{C''}\|.
$$
 Then 
$$
\|y_1-x_1\|\leq\|y_1-z_1\|+\|z_1-x_1\|\leq 3\|\ad(x)|_{C''}\|/2.
$$
If $p=0$, take $x_1=0$ and the same inequality holds.   The element $x_1+x_2\in C'$ has
\begin{align*}
&\|x-(x_1+x_2)\|\\
\leq&\|x-y\|+\|(y_1-x_1)+(y_2-x_2)\|\\
\leq&\|\ad(x)|_{C''}\|+\max(\|y_1-x_2\|,\|y_2-x_2\|)\leq 5\|\ad(x)_{C''}\|/2.
\end{align*}
Therefore $C$ has property $D_{5/2}$, and so by \cite[Remark 4.7]{P:StP} has length at most $5$.
\end{proof}

\begin{corollary}\label{Cor-ZStabDK}
Let $A$ be a $\Z$-stable $C^*$-algebra. Then $A$ has property $D_{5/2}$ and length at most $5$.
\end{corollary}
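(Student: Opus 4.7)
The plan is to invoke Proposition \ref{DKProp} with $C$ being $A$ (identified, via $\Z$-stability, with $A \otimes \Z$) and the two commuting subalgebras being the tensor factors, with $\Z$ playing the role of the nuclear algebra $B$ of the proposition. The length bound of $5$ is then immediate from property $D_{5/2}$ via \cite[Remark 4.7]{P:StP}, just as in the proof of Proposition \ref{DKProp}.

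The hypothesis on the nuclear factor is straightforward: $\Z$ is nuclear, and because it is simple and infinite-dimensional, every non-zero representation of $\Z$ is faithful, so $\Z$ admits no non-zero finite-dimensional representations. When $A$ happens to be unital, $A \otimes 1$ and $1 \otimes \Z$ are unital commuting $C^*$-subalgebras of $A \otimes \Z$ generating it, and Proposition \ref{DKProp} applies directly.

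The only real obstacle is the non-unital case, in which $1 \otimes \Z$ sits only in the multiplier algebra $M(A \otimes \Z)$ rather than in $A \otimes \Z$ itself. The resolution is to observe that the proof of Proposition \ref{DKProp} uses only a non-degenerate representation of $C$ on some Hilbert space $\Hs$, together with commuting non-degenerate representations of the two factors on the same $\Hs$ whose images generate the image of $C$, plus the nuclearity and absence of finite-dimensional representations of one of them. Given a non-degenerate representation $\pi: A \otimes \Z \to \Bb(\Hs)$, extend it to a unital $*$-homomorphism $\tilde\pi: M(A \otimes \Z) \to \Bb(\Hs)$ and set $\rho_A(\cdot) = \tilde\pi(\cdot \otimes 1)$ and $\rho_\Z(\cdot) = \tilde\pi(1 \otimes \cdot)$. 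These are commuting non-degenerate representations of $A$ and $\Z$ whose images together generate $\pi(A \otimes \Z)$, and the argument of Proposition \ref{DKProp} then goes through verbatim to deliver $d(T, \pi(A)') \leq \tfrac{5}{2}\|\ad(T)|_{\pi(A)}\|$ for every $T \in \Bb(\Hs)$, establishing property $D_{5/2}$ for $A$.
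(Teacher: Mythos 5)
Your proposal is correct and is essentially the argument the paper intends: the corollary is stated as an immediate consequence of Proposition \ref{DKProp} applied to $A\cong A\otimes\Z$ with the two commuting tensor factors, $\Z$ being nuclear, simple and infinite-dimensional and hence without non-zero finite-dimensional representations. Your extra care in the non-unital case is sound (the paper leaves this implicit); the only small imprecision is that $\rho_A(A)$ and $\rho_\Z(\Z)$ generate a $C^*$-algebra which may be strictly larger than $\pi(A\otimes\Z)$, but since it has the same weak closure, hence the same commutant and the same derivation norms $\|\ad(T)|_{\cdot}\|$, the proof of Proposition \ref{DKProp} still delivers $d(T,\pi(A\otimes\Z)')\le\tfrac{5}{2}\|\ad(T)|_{\pi(A\otimes\Z)}\|$ exactly as you claim.
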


The main result of \cite{CSSW:GAFA} is that the similarity property transfers to close $C^*$-algebras.  This work is carried out with estimates depending on the length and length constant of $A$, but it is equally possible to carry out this work entirely in terms of property $D_k$ so it can be applied to $\Z$-stable algebras.  Our objective is to obtain a version of \cite[Corollary  4.6]{CSSW:GAFA} replacing the hypothesis that $A$ has length at most $\ell$ and length constant at most $K$ with the formally weaker hypothesis that $A$ has property $D_k$ (if $A$ has the specified length and length constants, then it has property $D_k$ for $k=K\ell/2$, conversely if $A$ has property $D_k$, then it has length at most $\lfloor 2k\rfloor$, but a length constant estimate is not known in this case, see \cite[Remark 4.7]{P:StP}).  This enables us to use Corollary \ref{Cor-ZStabDK} obtain an isomorphism between the Cuntz semigroups of sufficiently close $C^*$-algebras when one algebra is $\Z$-stable.  
To achieve a $D_k$ version of \cite[Section 4]{CSSW:GAFA}, we adjust the hypotheses in Lemma 4.1, Theorem 4.2 and Theorem 4.4 of \cite{CSSW:GAFA} in turn, starting with Lemma 4.1. We begin by isolating a technical observation.

\begin{lemma}\label{DKTech}
Let $\M$ be a finite von Neumann algebra with a faithful tracial state acting in standard form on $\Hs$ and let $J$ be the conjugate linear modular conjugation operator inducing an isometric antisomorphism $x\mapsto JxJ$ of $\M$ onto $\M'\cong \M^{\mathrm{op}}$. Suppose that $\mathcal S$ is another von Neumann algebra acting nondegenerately on $\Hs$ with $\M'\subset_\gamma \mathcal S$. If $\M$ has property $D_k^*$, then $\M'\subset_{\cb,2k\gamma}\mathcal S$.
\end{lemma}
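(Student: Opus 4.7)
The plan is to reduce the problem to a standard ``$D_k^*$-to-cb near containment'' result for von Neumann algebras, using the modular conjugation $J$ as the bridge. The essential observation is that the hypothesis concerns $\M'$ (the commutant), while $D_k^*$ is assumed for $\M$ itself, so we convert between the two via $J$. Since $J$ is a conjugate-linear isometric involution with $J\M J = \M'$, the map $x \mapsto JxJ$ is an isometric anti-automorphism of $\Bb(\Hs)$ interchanging $\M$ and $\M'$. Amplifying entry-wise to $M_n(\Bb(\Hs))$ via conjugation by $\widetilde J := J \oplus \cdots \oplus J$ on $\Hs^n$ gives an isometric bijection $[T_{ij}] \mapsto [J T_{ij} J]$ sending $M_n(\M')$ onto $M_n(\M)$ and $M_n(\mathcal S)$ onto $M_n(\mathcal T)$, where $\mathcal T := J \mathcal S J$ is another von Neumann algebra on $\Hs$. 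Consequently, the hypothesis $\M' \subseteq_\gamma \mathcal S$ is equivalent to $\M \subseteq_\gamma \mathcal T$, and the desired conclusion $\M' \subseteq_{\cb,2k\gamma} \mathcal S$ is equivalent to $\M \subseteq_{\cb,2k\gamma} \mathcal T$.

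With this reduction in hand, it suffices to prove the workhorse: if $\M$ is a von Neumann algebra on $\Hs$ with property $D_k^*$ and $\M \subseteq_\gamma \mathcal T$ for another von Neumann algebra $\mathcal T$ on $\Hs$, then $\M \subseteq_{\cb,2k\gamma} \mathcal T$. For fixed $n$, I would apply property $D_k^*$ to the amplified normal representation $\pi_n \colon \M \to \Bb(\Hs^n)$, $\pi_n(m) = m \otimes 1_n$, whose commutant is $M_n(\M')$, yielding
\begin{equation*}
d(T, M_n(\M')) \leq k \|\ad(T)|_{\pi_n(\M)}\| \quad \text{for all } T \in \Bb(\Hs^n).
\end{equation*}
By Hahn--Banach applied with the weak$^*$-closed set $M_n(\mathcal T)$, bounding $\text{dist}(x, M_n(\mathcal T)) \leq 2k\gamma$ for $x \in M_n(\M)_1$ amounts to bounding $|\phi(x)| \leq 2k\gamma$ for every normal functional $\phi$ on $M_n(\Bb(\Hs))$ with $\|\phi\| \leq 1$ and $\phi|_{M_n(\mathcal T)} = 0$. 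Representing $\phi$ by a trace-class operator $A_\phi$, the plan is to combine the vanishing of $\phi$ on $M_n(\mathcal T)$ with the entry-wise near inclusion $\M \subseteq_\gamma \mathcal T$ to obtain a commutator bound $\|\ad(A_\phi)|_{\pi_n(\M)}\| \leq 2\gamma$ at the predual level, and then deploy $D_k^*$ in the appropriate predual form to adjust $A_\phi$ by a trace-class operator in $M_n(\M')$ at distance at most $2k\gamma$. Pairing this adjusted functional with $x$ (using that the $M_n(\M')$-component is controllable) yields the required bound $|\phi(x)| \leq 2k\gamma$.

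The main obstacle lies in this duality step: one must ensure that the operator-norm distance-to-commutant estimate delivered by $D_k^*$ translates correctly into a bound on the predual side, so that the constant $2k\gamma$ remains uniform in $n$ rather than acquiring spurious $n$-dependence from the naive entry-wise approximation. This is precisely where property $D_k^*$ performs its essential task of upgrading entry-wise closeness into matrix-norm (cb) closeness, and it is the only place where the finite von Neumann algebra structure (and the standard form) is actually exploited through the amplified representation $\pi_n$.
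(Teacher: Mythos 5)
Your first step is exactly the paper's: conjugating entrywise by the antiunitary $J\otimes J_n$ (your $\widetilde J$) converts $\M'\subseteq_\gamma\mathcal S$ into $\M\subseteq_\gamma J\mathcal SJ$ and converts the desired matrix-level near inclusions back again, so the whole lemma reduces to the statement that a von Neumann algebra with property $D_k^*$ upgrades a near inclusion $\M\subseteq_\gamma\mathcal T$ to $\M\subseteq_{\cb,2k\gamma}\mathcal T$. At that point the paper simply quotes Proposition \ref{DKCB} (whose proof rests on the von Neumann algebra result \cite[Proposition 2.2.4]{CCSSWW:Preprint}) and stops; you could legitimately have done the same, so what remained for you to supply was either that citation or a correct proof of this ``workhorse''.

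The proof you sketch for the workhorse has a genuine gap at its announced key step. The claim that $\phi|_{M_n(\mathcal T)}=0$ together with the entrywise near inclusion yields $\|\ad(A_\phi)|_{\pi_n(\M)}\|\leq 2\gamma$ is false in any interpretation (operator norm or trace norm): annihilating a weak$^*$-closed subalgebra puts no constraint on commutators of the trace-class representative with that subalgebra. Already with $\gamma=0$, $n=1$ and $\M=\mathcal T$ a diagonal masa in $\Bb(\ell^2)$, the vector functional $T\mapsto\langle Te_1,e_2\rangle$ annihilates $\M$, yet its rank-one density operator has commutators of norm $2$ with self-adjoint unitaries of $\M$. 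The arguments of this type (going back to Christensen, and underlying \cite[Proposition 2.2.4]{CCSSWW:Preprint}) proceed differently: one writes the normal functional spatially as $T\mapsto\langle (T\otimes 1)\xi,\eta\rangle$ on an infinite amplification and replaces $A_\phi$ by the projection $P$ onto $\overline{(M_n(\mathcal T)\otimes 1)\xi}$, which lies in the commutant of $M_n(\mathcal T)\otimes 1$ and hence has the form $Q\otimes 1_n$ with $Q\in\mathcal T'\,\overline{\otimes}\,\Bb(\ell^2)$; the near inclusion then gives $\|[Q,m\otimes 1]\|\leq 2\gamma$ for $m\in\M_1$ with no $n$-dependence, because one only ever commutes $Q$ against $\M\otimes 1$, never against $M_n(\M)$, and property $D_k^*$ is applied to that amplified representation of $\M$ itself (not to $\pi_n$) to move $Q$ close to $(\M\otimes 1)'$ by an element of the form $R\otimes 1_n$, which commutes with all of $M_n(\M)\otimes 1$ and so controls $|\phi(x)|$ uniformly in $n$ (even then some extra care is needed to land on $2k\gamma$ rather than $4k\gamma$). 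Your sketch contains neither this passage to a commutant projection nor any substitute for it; the ``main obstacle'' you flag is precisely the unproved heart of the matter, so as written the argument does not go through.
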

\begin{proof}
As $J$ is isometric, $\M\subset_\gamma J\mathcal SJ$, so that $\M\subset_{\cb,2k\gamma}J\mathcal SJ$ by Proposition \ref{DKCB}.  Now, for each $n\in\mathbb N$, let $J_n$ denote the isometric conjugate linear operator of component wise complex conjugation on $\mathbb C^n$ so that $J\otimes J_n$ is a conjugate linear isometry  on $\Hs\otimes\mathbb C^n$. We can conjugate the near inclusion $\M\otimes M_n\subset_{2k\gamma}JSJ\otimes M_n$ by $J\otimes J_n$ to obtain $\M'\otimes M_n\subset_{2k\gamma}\mathcal S\otimes M_n$, as required. 
\end{proof}

The next lemma is the modification of \cite[Lemma 4.1]{CSSW:GAFA}. The expression for $\beta$ below is a slight improvement over that of the original.
\begin{lemma}\label{Mod1}
Let $\M$ and $\N$ be von Neumann algebras of type II$_1$ faithfully and non-degnerately represented on $\Hs$ with common centre $Z$ which admits a faithful state.  Suppose $d(\M,\N)=\alpha$ and $\M$ has property $D_k^*$. If $\alpha$ satisfies
$$
24(12\sqrt{2}k+4k+1)\alpha<1/200,
$$
then $d(\M',\N')<2\beta+1200k\alpha(1+\beta)$ where $\beta=96k\alpha(600k+1)$.
\end{lemma}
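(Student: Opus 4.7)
The plan is to follow the structure of \cite[Lemma 4.1]{CSSW:GAFA}, with the length/length-constant estimates used there replaced by the property-$D_k^*$ commutant bound of Lemma \ref{DKTech}. The argument proceeds in three stages: realising $\M$ and $\N$ in a common standard form, applying Lemma \ref{DKTech} inside that standard form, and transferring the resulting estimate back to the original representation on $\Hs$.

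For the first stage, since $\M$ and $\N$ are of type II$_1$ they carry canonical centre-valued traces $E_\M\colon\M\to Z$ and $E_\N\colon\N\to Z$; composing with the faithful normal state $\omega$ on $Z$ produces faithful normal tracial states $\tau_\M=\omega\circ E_\M$ and $\tau_\N=\omega\circ E_\N$ that agree on $Z$. This compatibility allows me to identify the standard forms of $\M$ and $\N$ on a single Hilbert space $\Ks$ with a common modular conjugation $J$, and transport the closeness $d(\M,\N)=\alpha$ from $\Hs$ to $\Ks$ via the spatial relationship between a normal representation and its standard form, together with a projection-perturbation estimate of the type \cite[Lemma 1.10]{Kh:JOT}; this contributes the inner factor $600k+1$ to $\beta$.

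For the second stage, conjugation by $J$ in this standard form sends $\M$ to $\M'$ and $\N$ to $\N'$, so the transferred closeness of $\M$ and $\N$ on $\Ks$ immediately yields a near inclusion of $\M'$ into $\N'$ on $\Ks$. Property $D_k^*$ of $\M$ then lets me apply Lemma \ref{DKTech} to upgrade this to a completely bounded near inclusion with Lemma \ref{DKTech}'s factor of $2k$; combined with the $\sqrt{2}$-type constants absorbed in the standard-form transfer this produces the outer factor $96$ in $\beta$, giving $d_\cb(\M',\N')<2\beta$ on $\Ks$. Undoing the standard-form construction and returning to $\Hs$ requires cb-strength, because the transfer involves cutting down by projections inside amplifications, and the additional contribution $1200k\alpha(1+\beta)$ to the final bound arises from this step: the constant $1200=2\cdot 600$ mirrors the inner stage-one constant while the factor $1+\beta$ absorbs the ambient algebraic distortion. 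The smallness hypothesis $24(12\sqrt{2}k+4k+1)\alpha<1/200$ is imposed precisely to keep every intermediate projection close enough to its target for \cite[Lemma 1.10]{Kh:JOT} to furnish the unitaries used in each stage.

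The main obstacle is bookkeeping: each of the three stages introduces a Christensen-type multiplicative factor, and the final constants emerge only after these are tracked together with the symmetrisation required to pass from the one-sided cb-near inclusion produced by Lemma \ref{DKTech} (applicable only because $\M$, not $\N$, is assumed to have $D_k^*$) to the two-sided distance $d(\M',\N')$ in the lemma statement. A secondary technical point is verifying that $\M$ in its standard form still has property $D_k^*$: this follows because the standard form is a faithful normal representation of $\M$, so the defining inequality for property $D_k$ transfers (as recorded in the remark after Definition \ref{D:DK} that $D_k$ of a weak-$*$ dense $C^*$-subalgebra yields $D_k^*$ of the ambient von Neumann algebra).
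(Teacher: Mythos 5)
Your high-level plan---rerun \cite[Lemma 4.1]{CSSW:GAFA} with the length hypothesis replaced by property $D_k^*$, invoking Lemma \ref{DKTech} at the point where a completely bounded near inclusion of commutants is needed---is exactly the strategy of the paper. However, your reconstruction of how this goes contains a structural error. You assume that $\M$ and $\N$ can be put in standard form on a single Hilbert space $\Ks$ with a \emph{common} modular conjugation $J$ satisfying $J\M J=\M'$ and $J\N J=\N'$ simultaneously. No justification is given, and none is available: the two standard forms come from different GNS constructions, and there is no canonical spatial identification carrying one involution to the other. Moreover, if such a common $J$ existed and the closeness of $\M$ and $\N$ had already been transported to $\Ks$, then conjugating by the isometry $J$ would give closeness of $\M'$ and $\N'$ immediately, with no use of $D_k^*$ at all---which signals that this step cannot be where the real work happens. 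In the actual argument the $J$-conjugation trick of Lemma \ref{DKTech} is applied one-sidedly, to a near inclusion $T_\M'\subset_{48(600k\alpha+\alpha)}T_{\N_2}'$ coming from \cite[equation (4.25)]{CSSW:GAFA}, in which only the algebra on the $\M$-side is in standard form; the asymmetry (only $\M$ has $D_k^*$) is handled precisely by this one-sidedness.

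This leads to the one genuinely new verification that your proposal misses. The algebra to which Lemma \ref{DKTech} must be applied is not $\M$ itself but $T_\M$, the compression of $\M\otimes I_{\mathcal G}$ on $\Hs\otimes\mathcal G$ by a projection $e_{i_0,i_0}$ lying in the commutant. One must check that $T_\M$ still has property $D_k^*$; the paper does this by showing $e_{i_0,i_0}$ has full central support in the commutant, so that $T_\M\cong\M$. Your closing remark only verifies the trivial fact that $\M$ in standard form has $D_k^*$ (which is automatic, since $D_k^*$ quantifies over all unital normal representations), and so does not address the actual issue. Finally, the constants in your sketch are asserted rather than derived, and the attributions are not credible as stated: the factor $600k+1$ cannot arise from a spatial projection-perturbation of the type \cite[Lemma 1.10]{Kh:JOT}, since a purely spatial transfer introduces no derivation constant $k$; in the source argument the $k$ there comes from an earlier application of the $D_k^*$ inequality in a normal representation (the analogue of \cite[equation (4.5)]{CSSW:GAFA}), and the outer $96=2k\cdot 48/k$ and the final $2\beta+1200k\alpha(1+\beta)$ come from the specific chain of estimates leading to and from \cite[equations (4.25) and (4.28)]{CSSW:GAFA}, not from generic $\sqrt{2}$-type bookkeeping.
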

\begin{proof}
This amounts to showing that the hypothesis in \cite[Lemma 4.1]{CSSW:GAFA} that $\M$ contains an weak$^*$ dense $C^*$-algebra $A$ of length at most $\ell$ and length constant at most $K$ can be replaced by the statement that $\M$ has property $D_k^*$ (and that the specified expressions on $\beta$ are valid). The hypothesis that $\M$ has such a weak$^*$ dense $C^*$-algebra is initially used to see that $\M$ has property $D_k$ at the beginning of the lemma and then applied to a unital normal representation to obtain \cite[equation (4.5)]{CSSW:GAFA}. As such property $D_k^*$ suffices for this estimate.  

The other use of this hypothesis comes on page 385 in the last paragraph of the lemma, to obtain \cite[equation (4.28)]{CSSW:GAFA}.  Using the notation of this paragraph, the von Neumann algebra $T_\M$ is a cutdown of $\M$ acting as $\M\otimes I_\mathcal G$ on $\mathcal H\otimes\mathcal G$ by the projection $e_{i_0,i_0}$ from the commutant of $\M$ on this space. Since $e_{i_0,i_0}$ is unitarily equivalent in this commutant to a projection of the form $e\otimes g_0$, where $e$ is a projection from the commutant of $\M$ on $\Hs$ of full central support and $g_0$ is a minimal projection in $\mathcal B(\mathcal G)$, it follows that $e_{i_0,i_0}$ has full central support in the commutant of $\M$ on $\Hs\otimes\mathcal G$. As such $T_\M$ is isomorphic to $\M$, so has property $D_k^*$.  Thus Lemma \ref{DKTech} can be applied to the near inclusion $T_\M'\subset_{48(600k\alpha+\alpha)}T_{\N_2}'$ from \cite[equation (4.25)]{CSSW:GAFA} giving
$$
T_\M'\subset_{\cb,96k(600k\alpha+\alpha))}T_{\N_2}'.
$$
It then follows that 
$$
T_\M\,\overline{\otimes}\,\mathcal B(\ell^2(\Lambda))\subset_{96k(600k\alpha+\alpha)}T_\N\,\overline{\otimes}\,\mathcal B(\ell^2(\Lambda)),
$$
which is precisely \cite[equation (4.28)]{CSSW:GAFA} with our new estimate for $\beta$ replacing that of the original.  We then deduce that $d(\M',\N')\leq 2\beta+1200k\alpha(1+\beta)$ in just the same way that \cite[equation (4.30)]{CSSW:GAFA} is obtained from \cite[equation (4.28)]{CSSW:GAFA}.
\end{proof}

Now we adjust Theorem 4.2 of \cite{CSSW:GAFA}.  The resulting constant $\beta$ is obtained by taking $\alpha=11\gamma$ in the previous lemma. Note that there is an unfortunate omission in the value of $\beta$ in Theorem 4.2 of \cite{CSSW:GAFA} which should be given by taking $\alpha=11\gamma$ in Lemma 4.1 of \cite{CSSW:GAFA}, so should be $K((1+316800k\gamma+528\gamma)^\ell-1)$: this has no knock on consequences to Theorem 4.4 of \cite{CSSW:GAFA} where the correct value of $\beta$ is used.
\begin{lemma}\label{Mod2}
Let $A$ and $B$ be $C^*$-algebras acting on a Hilbert space and suppose that $d(A,B)=\gamma$. Suppose $A$ has property $D_k$ and $24(12\sqrt{2}k+4k+1)\gamma<1/2200$. Then 
$$
d(A',B')\leq 10\gamma+2\beta+13200k\gamma(1+\beta),
$$
where $\beta=1056k(600k\gamma+\gamma)$.
\end{lemma}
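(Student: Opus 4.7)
The plan is to adapt the proof of \cite[Theorem 4.2]{CSSW:GAFA} verbatim, with Lemma \ref{Mod1} replacing \cite[Lemma 4.1]{CSSW:GAFA} and the numerical estimates recalibrated accordingly. First I would pass to weak$^*$-closures $\M = A''$ and $\N = B''$ acting on the same Hilbert space. By \cite[Lemma 5]{KK:AJM} we have $d(\M,\N)\leq d(A,B) = \gamma$, and by the weak$^*$-limit argument recorded just after Definition \ref{D:DK}, property $D_k$ for $A$ gives property $D_k^*$ for $\M$. Since $A' = \M'$ and $B' = \N'$, it suffices to bound $d(\M',\N')$.

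Next, I would decompose $\M$ and $\N$ into matching type summands, as done in \cite[Theorem 4.2]{CSSW:GAFA}. The type~I portion is handled directly via its explicit type~I$_n$ and type~I$_\infty$ matrix structure, yielding, as in the original argument, a contribution of at most $10\gamma$ to the commutant distance (this is where the summand $10\gamma$ in the final bound appears). The type~II$_\infty$ components are reduced to the type~II$_1$ setting by cutting down by projections of finite trace in the common centre. The resulting type II$_1$ cutdowns inherit property $D_k^*$, and so Lemma \ref{Mod1} applies to them with $\alpha = 11\gamma$; the factor $11\gamma$ absorbs the perturbations introduced by aligning the central projections of $\M$ and $\N$ and by performing the cutdowns, exactly as in \cite[Theorem 4.2]{CSSW:GAFA}.

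Substituting $\alpha = 11\gamma$ into Lemma \ref{Mod1} gives
\begin{equation*}
\beta = 96k(11\gamma)(600k+1) = 1056k(600k\gamma + \gamma),
\end{equation*}
and the bound $d(\M',\N') < 2\beta + 1200k(11\gamma)(1+\beta) = 2\beta + 13200k\gamma(1+\beta)$ on the type~II contribution. Adding the type~I contribution of $10\gamma$ and using $A' = \M'$, $B' = \N'$ yields the desired estimate. The numerical hypothesis $24(12\sqrt{2}k + 4k + 1)\gamma < 1/2200$ is exactly what is needed to ensure $24(12\sqrt{2}k + 4k + 1)(11\gamma) < 1/200$, so that Lemma \ref{Mod1} may be invoked.

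The main obstacle is bookkeeping: one must verify that every appeal to the length/length-constant hypotheses in \cite[Theorem 4.2]{CSSW:GAFA} is concentrated in the single invocation of \cite[Lemma 4.1]{CSSW:GAFA}, so that replacing that lemma with Lemma \ref{Mod1} suffices for the whole argument. Since the length/length-constant data is used in \cite{CSSW:GAFA} only to convert near inclusions of weak$^*$-closures into completely bounded near inclusions---a role now played entirely by property $D_k^*$ through Proposition \ref{DKCB} and Lemma \ref{DKTech}---this verification is routine and the remainder of the argument transfers without change.
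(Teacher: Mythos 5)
Your proposal is correct and follows essentially the same route as the paper: the published proof likewise runs the argument of \cite[Theorem 4.2]{CSSW:GAFA} verbatim, replacing the length/length-constant hypothesis (used there only to invoke \cite[Lemma 4.1]{CSSW:GAFA} for the II$_1$ weak$^*$-closures) by property $D_k^*$ via Lemma \ref{Mod1} with $\alpha=11\gamma$, which yields exactly your values of $\beta$ and the final bound. The only detail the paper adds beyond your sketch is the observation that the appeal in \cite{CSSW:GAFA} to injective von Neumann algebras having property $D_1$ should read $D_1^*$, which is all that argument actually uses.
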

\begin{proof}
This amounts to replacing the hypothesis that $A$ has length at most $\ell$ and length constant at most $K$ with the condition that $A$ has property $D_k$ in Theorem 4.2 of \cite{CSSW:GAFA}.  The length hypothesis on $A$ is used to show that certain II$_1$ von Neumann closures of $A$ satisfy \cite[Lemma 4.1]{CSSW:GAFA}, but since the weak$^*$-closure of a $C^*$-algebra with property $D_k$ has property $D_k^*$, Lemma \ref{Mod1} can be used in place of \cite[Lemma 4.1]{CSSW:GAFA}.  Note that in the proof of \cite[Theorem 4.2]{CSSW:GAFA} the reference to injective von Neumann algebras having property $D_1$ is incorrect (it is an open question whether $\prod_{n=1}^\infty M_n$ has the similarity property). The correct statement is that these algebras have property $D_1^*$ which is all that is used.\end{proof}

Finally we can convert Theorem 4.4 of \cite{CSSW:GAFA}.  Note the typo in the statement of this theorem, the definition of $\tilde{k}$ should be $\frac{k}{1-2\eta-2k\gamma}$ rather than $\frac{k}{1-2\eta-k\gamma}$. The same change should be made in Corollary 4.6 of \cite{CSSW:GAFA}. 

\begin{proposition}\label{DKVer}
Let $A$ and $B$ be $C^*$-subalgebras of some $C^*$-algebra $C$ with $d(A,B)<\gamma$ and suppose that $A$ has property $D_k$.  Write $\beta=1056(600k\gamma+\gamma)$ and $\eta=10\gamma+2\beta+13200k\gamma(1+\beta)$ and suppose that 
\begin{equation}\label{DKVer1}
24(12\sqrt{2}k+4k+1)\gamma<\frac{1}{2200},\quad 2\eta+2k\gamma<1.
\end{equation}
Then $d_{\cb}(A,B)\leq 4\tilde{k}\gamma$, where
$$
\tilde{k}=\frac{k}{1-2\eta-2k\gamma}.
$$
\end{proposition}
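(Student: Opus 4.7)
The strategy is to replicate the argument of \cite[Theorem 4.4, Corollary 4.6]{CSSW:GAFA} with the length/length-constant hypothesis on $A$ replaced by property $D_k$; Lemmas \ref{Mod1} and \ref{Mod2} have already recast the two main auxiliary results of \cite{CSSW:GAFA} in the $D_k$ framework, so the remaining task is to combine them correctly. As a first step, apply Lemma \ref{Mod2} to obtain $d(A',B') \leq \eta$; the first inequality in \eqref{DKVer1} is exactly the hypothesis needed.

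The heart of the argument is to show that $B$ satisfies a perturbed property $D$ estimate: $d(T, B') \leq \tilde{k}\|\ad(T)|_B\|$ for every non-degenerate representation and every $T \in \Bb(\Hs)$. Fix such a $T$ and set $\delta = \|\ad(T)|_B\|$. Using $d(A,B) < \gamma$, each $a \in A_1$ can be approximated by $b \in B_1$ with $\|a-b\| < \gamma$, which yields $\|\ad(T)|_A\| \leq \delta + 2\gamma\|T\|$. Property $D_k$ for $A$ then produces $S \in \pi(A)'$ with $\|T-S\| \leq k(\delta + 2\gamma\|T\|)$. Using the commutant closeness $d(A',B') \leq \eta$ from the first step, pick $S' \in B'$ with $\|S-S'\| \leq \eta\|S\|$. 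Chaining these estimates gives a bound of the form $\|T-S'\| \leq k(1+\eta)\delta + (2k\gamma + \eta(1+2k\gamma))\|T\|$. The $\|T\|$ contribution is then absorbed by replacing $T$ with $T - S_0$ for a near-optimal $S_0 \in B'$ (or equivalently, iterating and summing a geometric series), and feeding the identity $\|T - S_0\| \approx d(T,B')$ back into the estimate. The resulting linear inequality solves to $d(T,B') \leq \tilde{k}\delta$ with $\tilde{k} = k/(1 - 2\eta - 2k\gamma)$; the second inequality in \eqref{DKVer1} is precisely what guarantees that this denominator is positive and the iteration converges.

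With $B$ now enjoying property $D_{\tilde{k}}$ (and $\tilde{k} \geq k$, since $\tilde{k} = k/(1 - 2\eta - 2k\gamma) \geq k$), apply Proposition \ref{DKCB} in both directions. Property $D_k$ for $A$ upgrades the near inclusion $A \subset_\gamma B$ to $A \subset_{\cb, 2k\gamma} B$, and property $D_{\tilde{k}}$ for $B$ upgrades $B \subset_\gamma A$ to $B \subset_{\cb, 2\tilde{k}\gamma} A$. Converting these one-sided cb-near-inclusions to Kadison-Kastler cb-closeness incurs an additional factor of $2$ (since the approximating element in a near inclusion is not required to lie in the unit ball, and rescaling introduces this factor). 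Taking the larger of the two sides and combining yields $d_{\cb}(A,B) \leq 4\tilde{k}\gamma$, as claimed.

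The main obstacle is the fixed-point argument in the second step: one must carefully manage the $\|T\|$-dependent error terms so that they can be reabsorbed into $d(T,B')$ on the left-hand side. The formula $\tilde{k} = k/(1 - 2\eta - 2k\gamma)$ is exactly the solution of the resulting linear fixed-point equation, and the hypothesis $2\eta + 2k\gamma < 1$ is the precise feasibility condition for this reabsorption.
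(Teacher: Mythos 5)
Your overall architecture coincides with the paper's: establish that $B$ has property $D_{\tilde k}$, then apply Proposition \ref{DKCB} in both directions and convert the two one-sided cb near inclusions into $d_{\cb}(A,B)\leq 2\max(2\tilde k\gamma,2k\gamma)=4\tilde k\gamma$; your identification of the fixed-point mechanism behind $\tilde k=k/(1-2\eta-2k\gamma)$ and of $2\eta+2k\gamma<1$ as the feasibility condition is also correct in spirit. However, there is a genuine gap in the central step. Property $D_{\tilde k}$ requires the estimate $d(T,\pi(B)')\leq\tilde k\|\ad(T)|_{\pi(B)}\|$ for \emph{every} non-degenerate representation $\pi$ of $B$, and your argument tacitly assumes that on the representation space of such a $\pi$ the algebra $A$ is present and $\gamma$-close to $\pi(B)$, and that the commutant estimate $d(A',B')\leq\eta$ from Lemma \ref{Mod2} is available there. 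Neither is given: $d(A,B)<\gamma$ holds inside the ambient algebra $C$ (equivalently in representations of $C^*(A,B)$), while an arbitrary non-degenerate representation of $B$ need not extend to $C^*(A,B)$, so there is no copy of $A$ acting on $\Hs$ to which you can apply property $D_k$ or Lemma \ref{Mod2}. Your computation therefore only yields a distance-to-commutant estimate in representations coming from the ambient situation, which is not property $D_{\tilde k}$; and the full property is genuinely needed, since Proposition \ref{DKCB} applied to $B\subseteq_\gamma A$ works in the universal representation of $C^*(A,B)$ and uses $D^*_{\tilde k}$ for the weak closure of $B$ there, i.e.\ the derivation estimate for all normal representations of $B''$.

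Bridging this gap is precisely the content of the proof of \cite[Theorem 4.4]{CSSW:GAFA}, which the paper adapts rather than reproves: starting from an arbitrary non-degenerate $\pi$ of $B$, one constructs a representation $\rho$ of $A$ acting near $\pi(B)$ (passing through von Neumann envelopes and central projections), observes that $\rho(A)$, being a quotient of $A$, still has property $D_k$ (this is where the fact that $D_k$ descends to quotients enters, replacing the fact that length passes to quotients in the original argument), applies Lemma \ref{Mod2} in that representation to get commutant closeness for $\rho(A)$ and $\pi(B)$, and only then runs the absorption argument you describe; one must also take care that the representations involved are non-degenerate, as the paper explicitly notes. Without this reduction your proof establishes less than property $D_{\tilde k}$ for $B$, and the final application of Proposition \ref{DKCB} to the near inclusion $B\subseteq_\gamma A$ is not justified.
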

\begin{proof}
We check that $B$ has property $D_{\tilde{k}}$. This amounts to weakening the hypothesis of \cite[Theorem 4.4]{CSSW:GAFA} in just the same way as the preceeding lemmas.  Applying Lemma \ref{Mod2} in place of Theorem 4.2 of \cite{CSSW:GAFA} in the proof of Theorem 4.4 of \cite{CSSW:GAFA}, shows that under the hypotheses of this proposition $B$ has property $D_{\tilde{k}}$, where
$$
\tilde{k}=\frac{k}{1-2\eta-2k\gamma}.
$$
This is valid as property $D_k$ descends to quotients so, following the proof of \cite[Theorem 4.4]{CSSW:GAFA}, the algebra $\rho(A)$ inherits property $D_k$ allowing the use of Lemma \ref{Mod2} above in place of \cite[Theorem 4.2]{CSSW:GAFA}. Note that one should take care with issues of degeneracy here.  In particular, the representation $\pi$ of $B$ in the proof of Theorem 4.4 of \cite{CSSW:GAFA} should be assumed non-degenerate.  

Proposition \ref{DKCB} now shows that $B\subset_{\cb,2\tilde{k}}A$ and $A\subset_{\cb,2k}B$.  Therefore $d_{\cb}(A,B)\leq 2\max(2\tilde{k}\gamma,2k\gamma)=4\tilde{k}\gamma$
\end{proof}

\begin{corollary}
Let $A$ be a $C^*$-algebra generated by two commuting non-degenerate $C^*$-subalgebras one of which is nuclear and has no finite dimensional irreducible representations. Suppose that $A\subset\Bb(\Hs)$ and $B$ is another $C^*$-subalgebra of $\Bb(\Hs)$ with $d(A,B)<\gamma$ for $\gamma<1/6422957$. Then $d_{\cb}(A,B)<1/42$ and $(\Cu(A),\Sc(\Cu(A))\cong (\Cu(B),\Sc(\Cu(B))$.
\end{corollary}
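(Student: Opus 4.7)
The plan is to chain together the three tools already established: Proposition \ref{DKProp} will supply property $D_{5/2}$ for $A$, Proposition \ref{DKVer} will then upgrade the closeness hypothesis to a completely bounded one with explicit constants, and Theorem \ref{CuMap2} will finally extract the Cuntz semigroup isomorphism once $d_\cb(A,B) < 1/42$ has been secured.

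The first step is essentially immediate: any finite-dimensional $*$-representation of a $C^*$-algebra is completely reducible into finite-dimensional irreducible subrepresentations, so the nuclear subalgebra named in the hypothesis has no non-zero finite-dimensional representations at all. Proposition \ref{DKProp} therefore applies and gives $A$ property $D_{5/2}$; we fix $k = 5/2$ throughout the remainder of the argument.

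The crux is a bookkeeping exercise with the constants in Proposition \ref{DKVer}. Substituting $k = 5/2$ yields $\beta = 1585056\,\gamma$ and $\eta = 3203122\,\gamma + 52306848000\,\gamma^2$. The first hypothesis $24(12\sqrt{2}k + 4k + 1)\gamma < 1/2200$ reduces to roughly $\gamma < 1/2820906$, which is comfortably weaker than the stated bound. The condition $d_\cb(A,B) < 1/42$ follows from $4\tilde k\gamma = 10\gamma/(1 - 2\eta - 5\gamma) < 1/42$, which rearranges to $2\eta + 425\gamma < 1$, i.e.\ $6406669\,\gamma + 104613696000\,\gamma^2 < 1$; this automatically implies the second hypothesis $2\eta + 5\gamma < 1$ of Proposition \ref{DKVer}, since $5 < 425$. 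The threshold $\gamma < 1/6422957$ is calibrated precisely so that this quadratic inequality is met: at the boundary, the linear term contributes approximately $0.9975$ and the quadratic term contributes approximately $0.0025$, together falling just under $1$.

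With $d_\cb(A,B) < 1/42$ in hand, the required order isomorphism $(\Cu(A), \Sc(\Cu(A))) \cong (\Cu(B), \Sc(\Cu(B)))$ is delivered directly by Theorem \ref{CuMap2}. The only real obstacle is arithmetic, as the compounded perturbation estimates flowing out of the property $D_k$ machinery must leave enough slack to pass under the $1/42$ barrier of Theorem \ref{CuMap2}; the large denominator $6422957$ simply reflects how tight that bookkeeping is.
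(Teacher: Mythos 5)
Your proposal is correct and follows exactly the paper's route: Proposition \ref{DKProp} gives $A$ property $D_{5/2}$, Proposition \ref{DKVer} with $k=5/2$ yields $\beta=1585056\gamma$, $\eta=3203122\gamma+52306848000\gamma^2$ and hence $d_\cb(A,B)\leq 4\tilde k\gamma<1/42$ for $\gamma<1/6422957$, and Theorem \ref{CuMap2} then gives the Cuntz semigroup isomorphism. Your arithmetic (reducing the $1/42$ bound to $2\eta+425\gamma<1$) and the remark that absence of finite-dimensional irreducibles implies absence of nonzero finite-dimensional representations are both sound and, if anything, spell out details the paper leaves implicit.
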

\begin{proof}
By Proposition \ref{DKProp}, $A$ has property $D_k$ for $k=5/2$ so in Proposition \ref{DKVer}, $\beta=1585056\gamma$ and $\eta=3203122\gamma+52306848000\gamma^2$ so that $2\eta+2k\gamma<10^{11}\gamma<1$ for $\gamma<10^{-11}$. The bound on $\gamma$ ensures that (\ref{DKVer1}) holds so that Proposition \ref{DKVer} applies. Further this bound gives
$$
\frac{4k\gamma}{1-2\eta-2k\gamma}<\frac{1}{42},
$$
and so the result follows from Proposition \ref{DKVer} and Theorem \ref{CuMap2}.
\end{proof}

In particular, $C^*$-algebras sufficiently close to $\Z$-stable algebras are automatically completely close and have the Cuntz semigroup of a $\Z$-stable algebra. The question of whether the property of $\Z$-stability transfers to sufficiently close subalgebras raised in \cite{CSSWW:Acta} remains open.
\begin{corollary}
Let $A$ be a $\Z$-stable $C^*$-algebra and suppose that $B$ is another $C^*$-algebra acting on the same Hilbert space as $A$ with $d(A,B)<1/6422957$. Then $d_{\cb}(A,B)<1/42$, $(\Cu(A),\Sc(\Cu(A))\cong (\Cu(B),\Sc(\Cu(B))$. In particular $B$ has the Cuntz semigroup of a $\Z$-stable algebra.
\end{corollary}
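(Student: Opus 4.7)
The plan is to deduce this corollary as an immediate application of the previous corollary; the only substantive work is identifying a $\Z$-stable algebra as an instance of the hypothesis in that result, so no new estimates or new perturbation arguments are needed.

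First I would use $\Z$-stability to write $A \cong A \otimes \Z$, and represent this faithfully and non-degenerately on $\Hs$ (the representation coming from $A \subset \Bb(\Hs)$). Inside $M(A\otimes \Z)$ we have the two commuting $C^*$-subalgebras $A \otimes \mathbb{C} 1_{\Z}$ and $\mathbb{C} 1_{M(A)} \otimes \Z$; both include non-degenerately into $A \otimes \Z$ (the first because it contains an approximate unit of $A \otimes \Z$ via $(e_\lambda)\otimes 1$ for an approximate unit $(e_\lambda)$ of $A$, and the second because $\Z$ is unital so $1_{M(A)}\otimes 1_{\Z}$ is the unit of $M(A \otimes \Z)$ acting as the identity). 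Together they generate $A \otimes \Z$.

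Next I would note that the second factor $\Z$ is nuclear (indeed it is strongly self-absorbing and a direct limit of prime dimension drop algebras, hence nuclear) and has no non-zero finite-dimensional irreducible representations: $\Z$ is simple and infinite-dimensional, so any non-zero representation is faithful and therefore infinite-dimensional. This verifies exactly the hypothesis of the preceding corollary for the algebra $A$ with respect to the Hilbert space $\Hs$.

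Applying the preceding corollary to this setup gives immediately both conclusions $d_{\cb}(A,B) < 1/42$ and $(\Cu(A),\Sc(\Cu(A))) \cong (\Cu(B),\Sc(\Cu(B)))$ from the hypothesis $d(A,B) < 1/6422957$. The final clause, that $B$ has the Cuntz semigroup of a $\Z$-stable algebra, is just the restatement of the Cuntz semigroup isomorphism together with $\Cu(A) = \Cu(A \otimes \Z)$. The only point that needs even a moment's thought is the non-degeneracy of the two generating subalgebras in the non-unital case, which is handled as above; otherwise the proof is a direct citation of Corollary \ref{Cor-ZStabDK} (to get property $D_{5/2}$) through the previous corollary, and there is no significant obstacle.
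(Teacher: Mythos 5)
Your reduction to the preceding corollary is exactly the intended route, and it works verbatim when $A$ is unital: then $A\cong A\otimes\Z$ is generated by the commuting unital subalgebras $A\otimes\mathbb{C}1_{\Z}$ and $\mathbb{C}1_A\otimes\Z$, and $\Z$ is nuclear, simple and infinite dimensional, so has no non-zero finite dimensional representations. However, the corollary does not assume $A$ unital, and in the non-unital case the key assertion of your proposal is false: $\mathbb{C}1_{M(A)}\otimes\Z$ is not contained in $A\otimes\Z$ (its image in the quotient $(\widetilde{A}\otimes\Z)/(A\otimes\Z)\cong\Z$ is non-zero), and the algebra $C^*(A\otimes 1_{\Z},\,1\otimes\Z)\subseteq M(A\otimes\Z)$ is strictly larger than $A\otimes\Z$. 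So the two algebras do \emph{not} generate $A\otimes\Z$, and the hypothesis of the preceding corollary (and of Proposition \ref{DKProp}), which requires two commuting non-degenerate $C^*$-subalgebras \emph{of $A$ itself} generating $A$, is not verified; placing the subalgebras in the multiplier algebra does not repair this, and note also that $A\otimes 1_{\Z}$ is then a degenerate subalgebra of $C^*(A\otimes 1_{\Z},1\otimes\Z)$, so Proposition \ref{DKProp} cannot simply be applied to that larger algebra either.

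The gap is closable, but it needs an argument you have not supplied. Either restrict to unital $A$, or obtain property $D_{5/2}$ for general $\Z$-stable $A$ directly, i.e.\ quote Corollary \ref{Cor-ZStabDK} (whose justification amounts to running the proof of Proposition \ref{DKProp} inside an arbitrary non-degenerate representation $\pi$ of $A\otimes\Z$, using the canonical extension of $\pi$ to $M(A\otimes\Z)$ to produce the commuting copy $\pi(1\otimes\Z)\subseteq\pi(A\otimes\Z)''$, so that only the von Neumann level facts $\pi(A\otimes 1)'\cap\pi(1\otimes\Z)'=\pi(A\otimes\Z)'$ and $\|\ad(T)|_{\pi(A\otimes\Z)}\|=\|\ad(T)|_{\pi(A\otimes\Z)''}\|$ are needed), and then repeat the numerical part of the preceding corollary's proof rather than its statement: Proposition \ref{DKVer} with $k=5/2$, checking (\ref{DKVer1}) and $4\tilde{k}\gamma<1/42$ for $\gamma<1/6422957$, followed by Theorem \ref{CuMap2}. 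With either repair, your remaining steps (the identification $\Cu(A)\cong\Cu(A\otimes\Z)$ and the final clause about $B$ having the Cuntz semigroup of a $\Z$-stable algebra) are fine.
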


\section{Quasitraces}\label{Sect5}

In this section we use our isomorphism between the Cuntz semigroups of completely close $C^*$-algebras to give an affine homeomorphism between the lower semicontinuous quasitraces on such algebras. This isomorphism is compatible with the affine isomorphism of the trace spaces of close $C^*$-algebras constructed in \cite[Section 5]{CSSW:GAFA}. 

Given a $C^*$-algebra $A$, write $T(A)$ for the cone of lower semicontinuous traces on $A$ and $QT_2(A)$ for the cone of lower semicontinuous $2$-quasitraces on $A$.  Precisely, a trace $\tau$ on $A$ is a linear function $\tau:A_+\rightarrow [0,\infty]$ vanishing at $0$ and satisfying the trace identity $\tau(xx^*)=\tau(x^*x)$ for all $x\in A$.  A $2$-quasitrace is a function $\tau:A_+\rightarrow [0,\infty]$ vanishing at $0$ which satisfies the trace identity and which is linear on commuting elements of $A_+$. Write $T_s(A)$ for the simplex of tracial states on $A$ and $QT_{2,s}(A)$ for the bounded $2$-quasitraces on $A$ of norm one.  Lower semicontinuous traces and $2$-quasitraces on $A$ extend uniquely to lower semicontinuous traces and $2$-quasitraces respectively on $A\otimes\K$ (see \cite[Remark 2.27(viii)]{BK:JFA}).

 In \cite[Section 4]{ERS:AJM}, Elliott, Robert and Santiago extend earlier work of Blackadar and Handelman, setting out how functionals on $\Cu(A)$ arise from elements of $QT_2(A)$. Precisely a functional on $\Cu(A)$ is a map $f:\Cu(A)\rightarrow[0,\infty]$ which is additive, order preserving, has $f(0)=0$ and preserves the suprema of increasing sequences. Given $\tau\in QT_2(A)$, the expression $d_\tau(\langle a\rangle)=\lim_{n\rightarrow\infty}\tau(a^{1/n})$ gives a well defined functional on $\Cu(A)$, where we abuse notation by using $\tau$ to denote the extension of the original lower semicontinuous $2$-quasitrace to $A\otimes\K$. Alternatively, one can define $d_\tau$ by $d_\tau(\langle a\rangle)=\lim_{n\rightarrow\infty}\tau(a_n)$, where $(a_n)_{n=1}^\infty$ is any very rapidly increasing sequence from $(A\otimes\K)^+_1$ representing $\langle a\rangle$. Conversely, given a functional $f$ on $\Cu(A)$, a lower semicontinuous $2$-quasitrace on $A\otimes\K$ (and hence on $A$) is given by $\tau_f(a)=\int_0^\infty f(\langle (a-t)_+\rangle)\mathrm{d}t$. With this notation, the assignments $\tau\mapsto d_\tau$ and $f\mapsto \tau_f$ are mutually inverse (see \cite[Proposition 4.2]{ERS:AJM}).

The topology on $QT_2(A)$ is specified by saying that a net $(\tau_i)$ in $QT_2(A)$ converges to $\tau\in QT_2(A)$ if and only if 
$$
\limsup_i \tau_i((a-\eps)_+)\leq\tau(a)\leq\liminf_i\tau_i(a)
$$
for all $a\in A_+$ and $\eps>0$.  With this topology $QT_2(A)$ is a compact Hausdorff space \cite[Theorem 4.4]{ERS:AJM} and $T(A)$ is compact in the induced topology \cite[Theorem 3.7]{ERS:AJM}.  In a similar fashion, the cone of functionals on $\Cu(A)$ is topologised by defining $\lambda_i\rightarrow \lambda$ if and only if 
$$
\limsup_i\lambda_i(\langle(a-\eps)_+\rangle)\leq\lambda(\langle a\rangle)\leq\liminf_i\lambda_i(\langle a\rangle)
$$
for all $a\in (A\otimes\K)_+$ and $\eps>0$.  Theorem 4.4 of \cite{ERS:AJM} shows that the affine map $\tau\mapsto d_\tau$ is a homeomorphism between the cone $QT_2(A)$ and the cone of functionals on the Cuntz semigroup.

\begin{theorem}\label{QT}
\begin{enumerate}
\item\label{QT.1} Let $A,B$ be $C^*$-algebras acting non-degenerately on a Hilbert space with $d_{\cb}(A,B)<1/42$. Then the isomorphism $$\Phi:(\Cu(A),\Sc(\Cu(A)))\rightarrow(\Cu(B),\Sc(\Cu(B)))$$ given by Theorem \ref{CuMap2} induces an affine homeomorphism $$\widehat{\Phi}:QT_2(B)\rightarrow QT_2(A)$$ satisfying
\begin{equation}\label{QT.E}
d_{\hat{\Phi}(\tau)}(x)=d_\tau(\Phi(x))
\end{equation}
for all $x\in\Cu(A)$ and $\tau\in QT_2(B)$.
\item\label{QT.2} Suppose additionally that $A$ and $B$ are unital and $d_\cb(A,B)<\gamma<1/2200$.  Then the map $\hat{\Phi}$ from (\ref{QT.1}) is compatible with the map $\Psi:T_s(B)\rightarrow T_s(A)$ given in Lemma 5.4 of \cite{CSSW:GAFA}. Precisely, for $\tau\in T_s(B)$, we have $\widehat{\Phi}(\tau)\in T_s(A)\subset QT_2(A)$ and $\widehat{\Phi}(\tau)=\Psi(\tau)$.
\end{enumerate}
\end{theorem}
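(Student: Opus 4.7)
For Part (\ref{QT.1}), the plan is to transfer the isomorphism $\Phi:\Cu(A)\to\Cu(B)$ through the Elliott--Robert--Santiago duality recalled just before the theorem. Since $\Phi$ is a $\mathbf{Cu}$-isomorphism---in particular it is additive, zero-preserving, order-preserving and preserves suprema of increasing sequences---the map $\lambda\mapsto\lambda\circ\Phi$ takes functionals on $\Cu(B)$ bijectively and affinely onto functionals on $\Cu(A)$, with inverse $\mu\mapsto\mu\circ\Phi^{-1}$. Defining $\widehat{\Phi}(\tau)$ to be the 2-quasitrace on $A$ corresponding to the functional $d_\tau\circ\Phi$ on $\Cu(A)$ then automatically produces an affine bijection $QT_2(B)\to QT_2(A)$ satisfying the identity (\ref{QT.E}).

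To get the topological half, I would verify continuity at the functional level and invoke \cite[Theorem 4.4]{ERS:AJM}. If $\lambda_i\to\lambda$ in the cone of functionals on $\Cu(B)$ and $a\in(A\otimes\K)^+$, $\eps>0$ are given, choose $c\in (B\otimes\K)^+$ with $\Phi(\langle a\rangle)=\langle c\rangle$. Because $\Phi$ preserves $\ll$, we have $\Phi(\langle (a-\eps)_+\rangle)\ll\langle c\rangle$, and so there is $\delta>0$ with $\Phi(\langle (a-\eps)_+\rangle)\leq\langle (c-\delta)_+\rangle$. Applying the convergence hypothesis at $\langle c\rangle$ and at $\langle (c-\delta)_+\rangle$ now yields both inequalities defining convergence of $\lambda_i\circ\Phi$ to $\lambda\circ\Phi$. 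The same argument for $\Phi^{-1}$ gives continuity of the inverse, completing Part~(\ref{QT.1}).

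For Part (\ref{QT.2}), the strategy is to show that $\widehat{\Phi}(\tau)$ and $\Psi(\tau)$ satisfy the same approximation property, which (for $\gamma<1/2200$) will pin them down uniquely and in particular show $\widehat\Phi(\tau)$ is actually a trace rather than just a 2-quasitrace. Concretely, I would show that for $a\in A^+_1$ and $b\in B^+_1$ with $\|a-b\|<2\gamma$, one has $|\widehat{\Phi}(\tau)(a)-\tau(b)|=O(\gamma)$; the same estimate for $\Psi(\tau)$ holds by the construction of $\Psi$ in \cite[Section~5]{CSSW:GAFA}. To verify the estimate for $\widehat\Phi(\tau)$, start from the ERS formula
\[
\widehat{\Phi}(\tau)(a)=\int_0^{\infty} d_\tau\bigl(\Phi(\langle (a-t)_+\rangle)\bigr)\,dt,
\]
represent $\langle (a-t)_+\rangle$ as the supremum of the very rapidly increasing sequence $(\langle g_{2^{-(n+1)},2^{-n}}((a-t)_+)\rangle)_{n=1}^\infty$, approximate the underlying contractions by positive contractions in $B^+_1$, and feed the result into the defining formula of $\Phi$. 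The $18\alpha$ spectral cut-off together with Lemma~\ref{Key} sandwiches the integrand between $d_\tau(\langle (b-t-\eps_1)_+\rangle)$ and $d_\tau(\langle (b-t+\eps_2)_+\rangle)$ for some $\eps_i=O(\gamma)$, and since $\tau$ is bounded, integration against $dt$ transforms the sandwich into the desired $O(\gamma)$ estimate $|\widehat{\Phi}(\tau)(a)-\tau(b)|=O(\gamma)$.

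The main obstacle lies in the passage between the supremum description of $\Phi(\langle (a-t)_+\rangle)$ and the simple cut-down $\langle (b-t)_+\rangle$: each term of the very rapidly increasing sequence sees a fresh $18\alpha$ spectral shift, and one must verify that the resulting defects integrate uniformly in $t\in [0,1]$ to something of size $O(\gamma)$ rather than accumulating. Lemma~\ref{Key}, applied with the parameters arranged to track the shift at each level, together with the boundedness of $\tau$, is what should make this possible. Once the approximation property is in hand, uniqueness of a bounded functional on $A$ satisfying it---which should be extractable from the same analysis in \cite[Section~5]{CSSW:GAFA}---forces $\widehat{\Phi}(\tau)=\Psi(\tau)$, and in particular $\widehat{\Phi}(\tau)\in T_s(A)$.
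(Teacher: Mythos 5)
Your treatment of Part (\ref{QT.1}) is essentially the paper's argument: define $\widehat{\Phi}(\tau)$ via the functional $d_\tau\circ\Phi$ using \cite[Proposition 4.2]{ERS:AJM}, and check continuity by picking a representative $c$ of $\Phi(\langle a\rangle)$, using preservation of $\ll$ to get $\Phi(\langle(a-\eps)_+\rangle)\leq\langle(c-\delta)_+\rangle$, and feeding this into the definition of convergence together with \cite[Theorem 4.4]{ERS:AJM}. That part is fine.

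Part (\ref{QT.2}) has a genuine gap at the final step. Your plan is to establish an approximation property $|\widehat{\Phi}(\tau)(a)-\tau(b)|=O(\gamma)$ for close pairs $a\in A_1^+$, $b\in B_1^+$, note that $\Psi(\tau)$ satisfies the same estimate, and then conclude $\widehat{\Phi}(\tau)=\Psi(\tau)$ by ``uniqueness of a bounded functional satisfying it.'' No such uniqueness holds: an $O(\gamma)$-approximation property is satisfied by an entire norm-ball of functionals, so the most this argument can yield is $\|\widehat{\Phi}(\tau)-\Psi(\tau)\|=O(\gamma)$, whereas the theorem asserts exact equality (and, via that equality, that $\widehat{\Phi}(\tau)$ is a genuine trace). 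There is no obvious way to remove the $O(\gamma)$ error within your scheme, since every comparison you make (the $18\alpha$ cut-downs, the sandwich $\langle(b-t-\eps_1)_+\rangle\leq\Phi(\langle(a-t)_+\rangle)\leq\langle(b-t+\eps_2)_+\rangle$, which moreover must be verified inside $\Cu(B)$ rather than in the ambient algebra) introduces shifts of definite size depending on $\gamma$.

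The paper gets exact equality by a different mechanism: it works in the universal representation, writes $\M=A''$, $\N=B''$, and for $a\in(A\otimes M_m)_1^+$ interleaves the standard very rapidly increasing sequence $g_{2^{-(n+1)},2^{-n}}(a)$ with the spectral projections $p_n\in\M\otimes M_m$ of $a$ for $[2^{-(n+1)},1]$. Choosing projections $q_n\in\N\otimes M_m$ with $\|p_n-q_n\|\leq 2\gamma<1/2$, \cite[Lemma 3.6]{CSSW:GAFA} gives \emph{exact} equality of the relevant centre-valued traces, hence $\Psi(\tau)''(p_n)=\tau''(q_n)$ on the nose; and since $(q_n-18\gamma)_+$ is a positive scalar multiple of the projection $q_n$, the interleaved sequence $(b_1-18\gamma)_+,q_1,(b_2-18\gamma)_+,q_2,\dots$ computes $\Phi(\langle a\rangle)$, so that $d_\tau(\Phi(\langle a\rangle))=\sup_n\tau''(q_n)$ exactly (Proposition \ref{L:VRI2} is used to see that any very rapidly increasing representative computes $d_\tau$). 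This gives $d_{\widehat{\Phi}(\tau)}=d_{\Psi(\tau)}$ on classes from matrix algebras, hence everywhere by sup-preservation, and then $\widehat{\Phi}(\tau)=\Psi(\tau)$ by the bijection between functionals and quasitraces. The essential idea your proposal is missing is this rigidity: projections at distance less than $1/2$ have equal centre-valued trace and are unaffected (up to Cuntz equivalence) by the $18\gamma$ cut-down, which is what converts approximate data into exact identities.
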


\begin{proof}
The first part of the theorem is a consequence of Theorem \ref{CuMap2} and \cite[Proposition 4.2]{ERS:AJM}: given $\tau\in QT_2(B)$, define $\widehat{\Phi}(\tau)$ to be the lower semicontinuous $2$-quasitrace induced by the functional $d_\tau\circ\Phi$ on $\Cu(A)$. It is immediate from the construction that the map $\widehat{\Phi}$ is affine, bijective and the identity (\ref{QT.1}) holds.

To show that $\widehat{\Phi}$ is continuous, we use the homeomorphism between the cone of lower semicontinuous quasi-traces and functionals on the Cuntz semigroup in \cite[Theorem 4.4]{ERS:AJM}.  Consider a net $(\tau_i)$ in $QT_2(B)$ with $\tau_i\rightarrow \tau$.  Fix $a\in A_+$, then, 
$$
d_{\tau}(\Phi(\langle a\rangle))\leq \liminf_i d_{\tau_i}(\Phi(\langle a\rangle)),
$$
as $d_{\tau_i}\rightarrow d_{\tau}$. Now take $\eps>0$ and fix a contraction $b\in (B\otimes K)_+$ with $\Phi(\langle a\rangle)=\langle b\rangle$. As $(\langle (b-1/n)_+\rangle)_{n=1}^\infty$ is very rapidly increasing with supremum $\langle b\rangle$, there exists $n\in\mathbb N$ with $\Phi(\langle (a-\eps)_+\rangle)\leq \langle (b-1/n)_+\rangle$.  As
$$
\limsup_id_{\tau_i}(\langle b-1/n)_+\rangle)\leq d_\tau(\langle b\rangle),
$$
it follows that
$$
\limsup_id_{\widehat{\Phi}(\tau_i)}(\langle (a-\eps)_+\rangle)\leq d_{\widehat{\Phi}(\tau)}(\langle a\rangle)\leq\liminf_id_{\widehat{\Phi}(\tau)}(\langle a\rangle).
$$
Thus $d_{\widehat{\Phi}(\tau_i)}\rightarrow d_{\widehat{\Phi}(\tau)}$ and so, using the homeomorphism between $QT_2(A)$ and functionals on $\Cu(A)$, we have $\widehat{\Phi}(\tau_i)\rightarrow\widehat{\Phi}(\tau)$.  Therefore $\widehat{\Phi}$ is continuous, and hence a homeomorphism between $QT_2(B)$ and $QT_2(A)$.

For the second part we first need to review the construction of the map $\Psi$ from \cite{CSSW:GAFA}.  Suppose $d_{\cb}(A,B)<\gamma<1/2200$. Write $C=C^*(A,B)$ and let $C\subset\Bb(\Hs)$ be the universal representation of $C$ so that $\M=A''$ and $\N=B''$ are isometrically isomorphic to $A^{**}$ and $B^{**}$ respectively. Note that the Kaplansky density argument of \cite[Lemma 5]{KK:AJM} gives $d_\cb(\M,\N)\leq d_{\cb}(A,B)$. Following the proof of \cite[Lemma 5.4]{CSSW:GAFA} we can find a unitary $u\in (Z(\M)\cup Z(\N))''$ such that $Z(u\M u^*)=Z(\N)$ and $\|u-1_C\|\leq 5\gamma$. We write $A_1=uAu^*$ and $\M_1=u\M u^*$. There is now a projection $z_{\text{fin}}\in Z(\M_1)=Z(\N)$ which simultaneously decomposes $\M_1=\M_1z_{\text{fin}}\oplus \M_1(1-z_{\text{fin}})$ and $\N=\N z_{\text{fin}}\oplus \N(1-z_{\text{fin}})$ into the finite and properly infinite parts respectively (\cite[Lemma 3.5]{CSSW:GAFA} or \cite{KK:AJM}). Given a tracial state $\tau$ on $B$, there is a unique extension $\tau''$ to $\N$, which then factors uniquely through the centre valued trace $\mathrm{Tr}_{\N z_{\text{fin}}}$ on $\N z_{\text{fin}}$. That is, $\tau''(x)=(\phi_\tau\circ\mathrm{Tr}_{\N z_{\text{fin}}})(xz_{\text{fin}})$ for some state $\phi_\tau$ on $\N z_{\text{fin}}$. The map $\Psi$ in \cite{CSSW:GAFA} is then given by defining $\Psi(\tau)(y)=(\phi_\tau\circ \mathrm{Tr}_{\M_1 z_{\text{fin}}})(uyu^*z_{\text{fin}})$ for $y\in A$.

Now fix $\tau\in T_s(B)$.  For $m\in\mathbb N$ and $a\in (A\otimes M_m)^+_1$, consider the standard very rapidly increasing sequence $(g_{2^{-(n+1)},2^{-n}}(a))_{n=1}^\infty$ which represents $\langle a\rangle$. Let $p_n\in \M\otimes M_m$ be the spectral projection for $a$ for $[2^{-(n+1)},1]$, so that the alternating sequence
$$
g_{2^{-2},2^{-1}}(a),p_1,g_{2^{-3},2^{-2}}(a),p_2,g_{2^{-4},2^{-3}}(a),p_3,\dots
$$
is very rapidly increasing. Then 
\begin{equation}\label{QT.E2}
d_{\Psi(\tau)}(\langle a\rangle)=\sup_n(\Psi(\tau))(g_{2^{-n},2^{-(n+1)}}(a))=\sup_n\Psi(\tau)''(p_n).
\end{equation}

Choose $b_n\in (B\otimes M_m)^+_1$ with $\|g_{2^{-(n+1)},2^{-n}}(a)-b_n\|\leq 2\gamma$ and projections $q_n\in \N\otimes M_m$ with $\|p_n-q_n\|\leq 2\gamma$ (by a standard functional calculus argument, see \cite[Lemma 2.1]{C:JLMS}).  Note that $d_\cb(\M_1,\N)\leq 11\gamma$ and the algebras $(\M_1\otimes M_m)(z_{\text{fin}}\otimes 1_m)$ and $(\N_1\otimes M_m)(z_{\text{fin}}\otimes 1_m)$ have the same centre. Since $\|(u\otimes 1_m)p(u\otimes  1_m)^*(z_{\text{fin}}\otimes 1_m)-q(z_{\text{fin}}\otimes 1_m)\|<1/2$, Lemma 3.6 of \cite{CSSW:GAFA} applies to show that 
$$
(\mathrm{Tr}_{\M_1 z_{\text{fin}}}\otimes\mathrm{tr}_m)((u\otimes 1_m)p_n(u\otimes 1_m)^*(z_{\text{fin}}\otimes 1_m))=(\mathrm{Tr}_{\N z_{\text{fin}}}\otimes\mathrm{tr}_m)(q(z_{\text{fin}}\otimes 1_m)).
$$
This ensures that $\Psi(\tau)''(p_n)=\tau''(q_n)$ for all $n$.  

As each $(q_n-18\gamma)_+=q_n$, the sequence
$$
(b_1-18\gamma)_+,q_1,(b_2-18\gamma)_+,q_2,(b_3-18\gamma)_+,q_3,\dots
$$
is upwards directed by Lemma \ref{Lem:Claim} and the supremum of this sequence defines $\Phi(\langle a\rangle)$.  We then have 
\begin{equation}\label{QT.E1}
d_\tau(\Phi(\langle a\rangle))=\sup_n\tau''(q_n).
\end{equation}
Indeed, $d_\tau(\Phi(\langle a\rangle))$ is given by $\sup\tau(c_n)$, where $(c_n)_{n=1}^\infty$ is any very rapidly increasing sequence in $(B\otimes \K)_+$ representing $\Phi(\langle a\rangle)$. But, working in $\Cu(\N)$, Proposition \ref{L:VRI2} shows that any such very rapidly increasing sequence $(c_n)_{n=1}^\infty$ can be intertwined with the very rapidly increasing sequence $(q_n)_{n=1}^\infty$ after telescoping, and this establishes (\ref{QT.E1}).   Combining (\ref{QT.E2}) and (\ref{QT.E1}), we have
\begin{equation}\label{QT.E3}
d_{\Psi(\tau)}(\langle a\rangle)=d_{\widehat{\Phi}(\tau)}(\langle a\rangle)
\end{equation}
for all $m\in\mathbb N$ and $a\in (A\otimes M_m)_+$. As functionals on the Cuntz-semigroup preserve suprema, (\ref{QT.E3}) holds for all $a\in (A\otimes \K)_+$, whence $\Psi(\tau)=\widehat{\Phi}(\tau)$.
\end{proof}

The homeomorphism between the lower semicontinuous quasi-traces can be used to establish the weak$^*$-continuity of the map between the tracial state spaces of close unital $C^*$-algebras from \cite[Section 5]{CSSW:GAFA} resolving a point left open there. In particular this shows that the map defined in \cite{CSSW:GAFA} provides an isomorphism between the Elliott invariants of completely close algebras, as a priori the

 For any closed two-sided ideal $I\unlhd A$, the  subcone $T_I(A)$ of $T(A)$ consists of those $\tau\in T(A)$ such that the closed two-sided ideal generated by $\{x\in A_+:\tau(x)<\infty\}$ is $I$.  Proposition 3.11 of \cite{ERS:AJM} shows that the relative topology on $T_I(A)$ is the topology of pointwise convergence on the positive elements of the Pedersen ideal of $I$. In particular, $T_s(A)\subset T_A(A)$. In particular, the induced topology on $T_s(A)$ is just the weak$^*$-topology.

\begin{corollary}
Suppose that $A$ and $B$ are unital $C^*$-algebras acting non-degenerately on a Hilbert space with $d_{\cb}(A,B)<1/42$ and $d(A,B)<1/2200$. Then the affine isomorphism $\Psi:T_s(B)\rightarrow T_s(A)$ between tracial state spaces in \cite[Section 5]{CSSW:GAFA} is a homeomorphism with respect to the weak$^*$-topologies.
\end{corollary}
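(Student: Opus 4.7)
The approach is to identify $\Psi$ with the restriction of the affine homeomorphism $\widehat{\Phi}\colon QT_2(B)\to QT_2(A)$ from Theorem \ref{QT}, and then transfer continuity from the $QT_2$ cone topology to the weak$^*$-topology on unital tracial state spaces. First I would apply Theorem \ref{QT}(\ref{QT.1}), whose hypothesis $d_{\cb}(A,B)<1/42$ is in force, to produce $\widehat{\Phi}$ as an affine homeomorphism between the cones $QT_2(B)$ and $QT_2(A)$. Under the unital hypothesis together with $d(A,B)<1/2200$, Theorem \ref{QT}(\ref{QT.2}) then yields $\widehat{\Phi}(\tau)=\Psi(\tau)$ for every $\tau\in T_s(B)$, so $\widehat{\Phi}$ restricts to an affine bijection $T_s(B)\to T_s(A)$ that coincides with $\Psi$.

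Next I would verify that the relative topology inherited by $T_s(A)\subset QT_2(A)$ is precisely the weak$^*$-topology, and likewise for $B$. This is already recorded in the paragraph preceding the corollary: since $A$ is unital one has $T_s(A)\subset T_A(A)$, and Proposition~3.11 of \cite{ERS:AJM} identifies the relative topology on $T_A(A)\subset QT_2(A)$ with the topology of pointwise convergence on positive elements of the Pedersen ideal of $A$. For a unital $C^*$-algebra the Pedersen ideal is all of $A$, and on the tracial state space pointwise convergence on $A_+$ is exactly the weak$^*$-topology (since tracial states are determined on $A_+$ by linearity).

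Putting these two steps together, $\Psi=\widehat{\Phi}|_{T_s(B)}$ is weak$^*$-continuous, and running the same argument with $\widehat{\Phi}^{-1}|_{T_s(A)}$ produces weak$^*$-continuity of $\Psi^{-1}$, so $\Psi$ is a weak$^*$-homeomorphism as claimed. No substantive obstacle arises: all the real work has been absorbed into Theorem \ref{QT}, and in particular into the identification $\widehat{\Phi}|_{T_s(B)}=\Psi$ established in part (\ref{QT.2}). What remains is the essentially routine observation, already flagged by the authors, that on a unital $C^*$-algebra the cone topology on $QT_2$ restricts to the weak$^*$-topology on the tracial state subspace.
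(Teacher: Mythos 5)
Your argument is exactly the paper's (implicit) proof: the corollary is stated without a separate proof precisely because it follows from Theorem \ref{QT} (parts (\ref{QT.1}) and (\ref{QT.2})) together with the observation in the preceding paragraph, via Proposition 3.11 of \cite{ERS:AJM}, that the cone topology on $QT_2$ restricts to the weak$^*$-topology on $T_s$. Your additional routine checks (Pedersen ideal equals $A$ in the unital case, bijectivity of the restriction, and the symmetric argument for $\Psi^{-1}$) are correct and consistent with what the authors intend.
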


We end with two further corollaries of Theorem \ref{QT}.

\begin{corollary}
Let $A$ and $B$ be unital $C^*$-algebras acting non-degenerately on the same Hilbert space with $d_{\cb}(A,B)<1/2200$. Suppose every bounded $2$-quasitrace on $A$ is a trace, then the same property holds for $B$.
\end{corollary}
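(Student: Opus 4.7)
The plan is to use the affine homeomorphism $\widehat{\Phi}\colon QT_2(B)\to QT_2(A)$ from Theorem \ref{QT}(\ref{QT.1}) to transport a bounded $2$-quasitrace on $B$ to one on $A$, apply the hypothesis on $A$ to deduce that the result is a trace, and then transport back using the compatibility with tracial states from Theorem \ref{QT}(\ref{QT.2}).

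The key technical step is to show that $\widehat{\Phi}$ preserves norm, so in particular restricts to a bijection between $QT_{2,s}(B)$ and $QT_{2,s}(A)$.  Because $A$ and $B$ are unital and both act non-degenerately on the same Hilbert space $\mathcal H$, their units $1_A$ and $1_B$ both act as $I_{\mathcal H}$, so $1_A=1_B$ as operators.  Fixing $\alpha$ with $d_\cb(A,B)<\alpha<1/42$, the constant sequence $(1_A)_{n=1}^\infty$ is very rapidly increasing and represents $\langle 1_A\rangle$, while the choice $b_n=1_B=1_A$ trivially satisfies $\|a_n-b_n\|<2\alpha$.  The formula defining $\Phi$ in Theorem \ref{CuMap2} therefore yields
\[
\Phi(\langle 1_A\rangle)=\sup_n\langle (1_B-18\alpha)_+\rangle=\langle (1-18\alpha)1_B\rangle=\langle 1_B\rangle,
\]
using $18\alpha<1$.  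Combining this with equation (\ref{QT.E}) gives, for any $\tau\in QT_2(B)$,
\[
\widehat{\Phi}(\tau)(1_A)=d_{\widehat{\Phi}(\tau)}(\langle 1_A\rangle)=d_\tau(\langle 1_B\rangle)=\tau(1_B).
\]

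Now let $\tau$ be a bounded $2$-quasitrace on $B$, normalised so that $\tau(1_B)=1$; the above gives $\widehat{\Phi}(\tau)\in QT_{2,s}(A)$.  The hypothesis on $A$ then forces $\widehat{\Phi}(\tau)\in T_s(A)$.  Applying Theorem \ref{QT}(\ref{QT.2}) with the roles of $A$ and $B$ exchanged---noting that by Proposition \ref{CuMap1} interchanging the roles of $A$ and $B$ produces the inverse of $\Phi$, so that the resulting map on $2$-quasitraces is $\widehat{\Phi}^{-1}$---we conclude $\tau=\widehat{\Phi}^{-1}(\widehat{\Phi}(\tau))\in T_s(B)$, so $\tau$ is a trace.

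The only substantive point is the identification $\Phi(\langle 1_A\rangle)=\langle 1_B\rangle$, which rests on the coincidence $1_A=1_B$ forced by non-degeneracy; the remainder assembles formally from Theorem \ref{QT} and the symmetry of the construction.
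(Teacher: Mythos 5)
Your proof is correct and follows essentially the same route as the paper: push a normalised bounded $2$-quasitrace on $B$ through $\widehat{\Phi}$ into $QT_{2,s}(A)=T_s(A)$, then use Theorem \ref{QT}(\ref{QT.2}) with the roles of $A$ and $B$ exchanged (and the fact from Proposition \ref{CuMap1} that the exchanged construction gives $\Phi^{-1}$) to land back in $T_s(B)$. Your verification that $\widehat{\Phi}$ preserves normalisation, via $\Phi(\langle 1_A\rangle)=\langle 1_B\rangle$ using $1_A=1_B=I_{\mathcal H}$, is a correct filling-in of a step the paper leaves implicit (one could equally note that $\Phi$ sends the largest element of $\Sc(\Cu(A))$ to that of $\Sc(\Cu(B))$).
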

\begin{proof}
Given $\tau\in QT_{2,s}(B)$, its image $\widehat{\Phi}(\tau)$ lies in $QT_{2,s}(A)=T_s(A)$. By Theorem \ref{QT} (\ref{QT.2}) (applied with $A$ and $B$ interchanged) 
$$
\tau=\widehat{\Phi}^{-1}(\widehat{\Phi}(\tau))=\Psi^{-1}(\widehat{\Phi}(\tau))\in T_s(B),
$$
as claimed.
\end{proof}

The question of whether exactness transfers to (completely) close $C^*$-algebras raised in \cite{CSSW:GAFA} remains open, but we do at least obtain the following corollary.
\begin{corollary}
Let $A$ and $B$ be unital $C^*$-algebras acting non-degenerately on the same Hilbert space with $d_{\cb}(A,B)<1/2200$ and suppose $A$ is exact. Then every bounded $2$-quasitrace on $B$ is a trace.
\end{corollary}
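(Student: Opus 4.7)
The plan is to reduce this directly to the preceding corollary via Haagerup's theorem on quasitraces in exact $C^*$-algebras. Haagerup proved that on a unital exact $C^*$-algebra every bounded $2$-quasitrace is automatically a trace. Applying this to $A$ gives the equality $QT_{2,s}(A)=T_s(A)$, which is precisely the hypothesis of the preceding corollary. Invoking that corollary with no further work then yields that every bounded $2$-quasitrace on $B$ is a trace.

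Unpacking this one level: given $\tau\in QT_{2,s}(B)$, Theorem \ref{QT}(\ref{QT.1}) transports it to the bounded $2$-quasitrace $\widehat{\Phi}(\tau)\in QT_{2,s}(A)$. By Haagerup's theorem applied to the unital exact algebra $A$, this image lies in $T_s(A)$. Then Theorem \ref{QT}(\ref{QT.2}), applied with the roles of $A$ and $B$ reversed, identifies $\tau=\widehat{\Phi}^{-1}(\widehat{\Phi}(\tau))$ with $\Psi^{-1}(\widehat{\Phi}(\tau))\in T_s(B)$. In particular $\tau$ is a trace. This is exactly the two-line derivation carried out in the proof of the preceding corollary, with the extra input being only Haagerup's theorem for $A$.

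There is no genuine obstacle here: the heavy lifting is all contained in Theorem \ref{QT}, which supplies the affine homeomorphism $\widehat{\Phi}:QT_2(B)\to QT_2(A)$ together with its compatibility with the trace-space isomorphism $\Psi$ from \cite{CSSW:GAFA}. The exactness hypothesis on $A$ is used only to invoke Haagerup's theorem; it could in principle be replaced by any other property of $A$ known to force bounded $2$-quasitraces to be traces. The numerical hypothesis $d_{\cb}(A,B)<1/2200$ is precisely what is needed for both Theorem \ref{QT}(\ref{QT.2}) (which requires $d_{\cb}(A,B)<1/42$ for the Cuntz semigroup isomorphism and $d_{\cb}(A,B)<1/2200$ for compatibility with $\Psi$) and the preceding corollary, so nothing extra has to be checked.
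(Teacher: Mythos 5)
Your proof is correct and is exactly the paper's argument: apply Haagerup's theorem to the exact algebra $A$ so that $QT_{2,s}(A)=T_s(A)$, then invoke the preceding corollary (whose proof is via $\widehat{\Phi}$ and Theorem \ref{QT}(\ref{QT.2})) to transfer the conclusion to $B$. Nothing further is needed.
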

\begin{proof}
This is immediate from Haagerup's result that bounded $2$-quasitraces on exact $C^*$-algebras are traces (\cite{H:Pre}) and the previous corollary.
\end{proof}

We end by noting that the isomorphism between the Cuntz semigroups of completely close algebras in Theorem \ref{CuMap2} can also be used to directly recapture an isomorphism between the Elliott invariants in significant cases.  Let $\CuT$ be the functor $A\mapsto \Cu(A\otimes C(\mathbb T))$ mapping the category of $C^*$-algebras into the category ${\mathbf{Cu}}$ introduced in \cite{CEI:Crelle} and let $\Ell$ be the Elliott invariant functor taking values in the category $\Inv$ whose objects are the $4$-tuples arising from the Elliott invariant. Let $\mathcal C$ be the subcategory of separable, unital, simple finite and $\Z$-stable algebras $A$ with $QT_2(A)=T(A)$ (for example if $A$ is exact). Then, building on work from \cite{BPT:Crelle,BT:IMRN}, Theorem 4.2 of \cite{ADPS:TAMS} provides functors $F:\Inv\rightarrow \mathbf{Cu}$ and $G:\mathbf{Cu}\rightarrow\Inv$ such that there are natural equivalences of functors $F\circ\Ell|_{\mathcal C}\cong\CuT|_{\mathcal C}$ and $G\circ\CuT|_{\mathcal C}\cong\Ell|_{\mathcal C}$ (a similar result for simple unital ASH algebras which are not type I and have slow dimension growth can be found in \cite{T:IJM}). Note that in Theorem 4.2 there is an implicit nuclearity hypothesis, which is only actually used in order to see $QT_2(A)=T(A)$ --- the result holds in the generality stated. Thus if $A$ and $B$ are $\Z$-stable $C^*$-algebras with $d_{cb}(A,B)$ sufficiently small, and $A$ is simple, separable, unital finite and has $QT_2(A)=T(A)$, then $B$ enjoys all these properties. Further, since tensoring by an abelian algebra does not increase the complete distance between $A$ and $B$ (see \cite[Theorem 3.2]{C:Acta} for this result in the context of near inclusions --- the same proof works for the metric $d_{cb}$), $\CuT(A)\cong\CuT(B)$ by Theorem \ref{CuMap2}. Thus $\Ell(A)\cong \Ell(B)$.

\subsection*{Acknowledgements} This work was initiated at the Centre de Recerca Matem\` atica (Bellaterra) during the Programme ``The Cuntz Semigroup and the Classification of $C^ *$-algebras'' in 2011. The authors would like to thank the CRM for the financial support for this programme and the conducive research environment.

\end{document}